\title{Aspects of Predicative Algebraic Set Theory II: Realizability}
\author{Benno van den Berg \& Ieke Moerdijk}
\date{January 15, 2008}
\begin{document}
\maketitle

\emph{Dedicated to Jean-Yves Girard on the occasion of his 60th birthday}

\section{Introduction}

This paper is the second in a series on the relation between algebraic set theory \cite{joyalmoerdijk95} and predicative formal systems. The purpose of the present paper is to show how realizability models of constructive set theories fit into the framework of algebraic set theory. It can be read independently from the first part \cite{bergmoerdijk07b}; however, we recommend that readers of this paper read the introduction to \cite{bergmoerdijk07b}, where the general methods and goals of algebraic set theory are explained in more detail.

To motivate our methods, let us recall the construction of Hyland's effective topos $\Eff$ \cite{hyland82}. The objects of this category are pairs $(X, =)$, where = is a subset of $\NN \times X \times X$ satisfying certain conditions. If we write $n \Vdash x = y$ in case the triple $(n, x, y)$ belongs to this subset, then these conditions can be formulated by requiring the existence of natural numbers $s$ and $t$ such that
\begin{displaymath}
\begin{array}{l}
s \Vdash x = x' \rightarrow x' = x \\
t \Vdash x = x' \land x' = x'' \rightarrow x = x''.
\end{array}
\end{displaymath}
These conditions have to be read in the way usual in realizability \cite{troelstra98}. So the first says that for any natural number $n$ satisfying $n \Vdash x = x'$, the expression $s(n)$ should be defined and be such that $s(n) \Vdash x' = x$. \footnote{For any two natural numbers $n, m$, the Kleene application of $n$ to $m$ will be written $n(m)$, even when it is undefined. When it is defined, this will be indicated by $n(m) \downarrow$. We also assume that some recursive pairing operation has been fixed, with the associated projections being recursive. The pairing of two natural numbers $n$ and $m$ will be denoted by $\langle n, m \rangle$. Every natural number $n$ will code a pair, with its first and second projection denoted by $n_0$ and $n_1$, respectively.} And the second stipulates that for any pair of natural numbers $n$ and $m$ with $n \Vdash x = x'$ and $m \Vdash x' = x''$, the expression $t( \langle n, m \rangle)$ is defined and is such that $t(\langle n, m \rangle) \Vdash x = x''$.

The arrows $[F]$ between two such objects $(X,=)$ and $(Y,=)$ are equivalence
classes of subsets $F$ of $\NN \times X \times Y$ satisfying certain conditions. Writing $n \Vdash Fxy$ for $(n,x,y) \in F$, one requires the existence of realizers for statements of the form
\begin{displaymath}
\begin{array}{l}
Fxy \land x = x' \land y = y' \rightarrow Fx'y' \\
Fxy \rightarrow x = x \land y = y \\
Fxy \land Fxy' \rightarrow y = y' \\
x = x \rightarrow \exists y \, Fxy.
\end{array}
\end{displaymath}
Two such subsets $F$ and $G$ represent the same arrow $[F] = [G]$ iff
they are extensionally equal in the sense that
\[ Fxy \leftrightarrow Gxy \]
is realized.

As shown by Hyland, the logical properties of this topos $\Eff$ are quite remarkable. Its first-order arithmetic coincides with the realizability
interpretation of Kleene (1945). The interpretation of the higher types in $\Eff$ is given by {\bf HEO}, the hereditary effective operations. Its higher-order arithmetic is captured by realizability in the manner of Kreisel and Troelstra \cite{troelstra73}, so as to validate the uniformity principle:
\[ \forall X \in {\cal P}\NN \, \exists n \in \NN \, \phi(X, n) \rightarrow \exists n \in \NN \, \forall X \in {\cal P}\NN \, \phi(X, n). \]

The topos $\Eff$ is one in an entire family of ``realizability toposes''
defined over arbitrary partial combinatory algebras (or more general structures modeling computation). The relation between these toposes has been not been completely clarified, although much interesting work has already been done in this direction \cite{pitts81, hyland82, longley95, birkedal00, hofstravanoosten03, hofstra06} (for an overview, see \cite{vanoosten08}). The construction of the topos $\Eff$ and its variants can be internalised in an arbitrary topos. This means in particular that one can construct toposes by iterating (alternating) constructions of sheaf and realizability toposes to obtain interesting models for higher-order intuitionistic arithmetic {\bf HHA}. An example of this phenomenon is the modified realizability topos, which occurs as a closed subtopos of a realizability topos constructed inside a presheaf topos \cite{vanoosten97}.

The purpose of this series of papers is to show that these results are not only valid for toposes as models of {\bf HHA}, but also for certain types of categories equipped with a class of small maps suitable for constructing models of constructive set theories like {\bf IZF} and {\bf CZF}. In the first paper of this series \cite{bergmoerdijk07b}, we have axiomatised this type of categories, and refer to them as ``predicative category with small maps'' (for the convenience of the reader their precise definition is recalled in Appendix B). A basic result from \cite{bergmoerdijk07b} is the following:
\begin{theo}{existmodelsetth}
Every predicative category with small maps $(\ct{E}, \smallmap{S})$ contains a model $(V, \epsilon)$ of set theory. Moreover,
\begin{itemize}
\item[(i)] $(V, \epsilon)$ is a model of {\bf IZF}, whenever the class \smallmap{S} satisfies the axioms {\bf (M)} and {\bf (PS)}.
\item[(ii)] $(V, \epsilon)$ is a model of {\bf CZF}, whenever the class \smallmap{S} satisfies {\bf (F)}.\footnote{The precise formulations of the axioms {\bf (M)}, {\bf (PS)} and {\bf (F)} can be found in Appendix B as well.}
\end{itemize}
\end{theo}
To show that realizability models fit into this picture, we prove that predicative categories with small maps are closed under internal realizability, in the same way that toposes are. More precisely, relative to a given predicative category with small maps $(\ct{E}, \smallmap{S})$, we construct a ``predicative realizability category'' $(\Eff_\ct{E}, \smallmap{S}_\ct{E})$. The main result of this paper will then be:
\begin{theo}{effascatwsmallmaps}
If $(\ct{E}, \smallmap{S})$ is a predicative category with small maps, then so
is $(\Eff_\ct{E}, \smallmap{S}_\ct{E})$. Moreover, if $(\ct{E}, \smallmap{S})$ satisfies {\bf (M), (F)} or {\bf (PS)}, then so does $(\Eff_\ct{E}, \smallmap{S}_\ct{E})$.
\end{theo}
We show this for the pca $\NN$ together with Kleene application, but the result is also valid, when this is replaced by an arbitrary \emph{small} pca \pca{A} in \ct{E}. The proof of the theorem above is technically rather involved, in particular in the case of the additional properties needed to ensure that the model of set theory satisfies the precise axioms of {\bf IZF} and {\bf CZF}. However, once this work is out of the way, one can apply the construction to many different predicative categories with small maps, and show that familiar realizability models of set theory (and some unfamiliar ones) appear in this way.

One of the most basic examples is that where \ct{E} is the category of sets,
and \smallmap{S} is the class of maps between sets whose fibers are all bounded in size by some inaccessible cardinal. The construction underlying \reftheo{effascatwsmallmaps} then produces Hyland's effective topos $\Eff$, together with the class of small maps defined in \cite{joyalmoerdijk95}, which in \cite{kouwenhovenvanoosten05} was shown to lead to the Friedman-McCarty model of {\bf IZF} \cite{friedman73, mccarty84} (we will reprove this in Section 5).

An important point we wish to emphasise is that one can prove all the model's salient properties without constructing it explicitly, using its universal properties instead. We explain this point in more detail. A predicative category with small maps consists of a category \ct{E} and a class of maps \smallmap{S} in it, the intuition being that the objects and morphisms of \ct{E} are classes and class morphisms, and the morphisms in \smallmap{S} are those that have small (i.e., set-sized) fibres. For such predicative categories with small maps, one can prove that the small subobjects functor is representable. This means that there is a \emph{power class object} $\spower(X)$ which classifies the small subobjects of $X$, in the sense that maps $B \rTo \spower(X)$ correspond bijectively to jointly monic diagrams \diag{B & U \ar[r] \ar[l] & X} with $U \rTo B$ small. Under this correspondence, the identity \func{\id}{\spower(X)}{\spower(X)} corresponds to a membership relation \diag{\in_X \ar@{ >->}[r] & X \times \spower X.}

The model of set theory $V$ that every predicative category with small maps contains (\reftheo{existmodelsetth}) is constructed as the initial algebra for the \spower-functor. Set-theoretic membership is interpreted by a subobject $\epsilon \subseteq V \times V$, which one obtains as follows. By Lambek's Lemma, the structure map for this initial algebra $V$ is an isomorphism. We denote it by Int, and its inverse by Ext:
\diag{ \spower V \ar@/^/[rr]^{\rm Int} & & V \ar@/^/[ll]^{\rm Ext}. }
The membership relation \diag{\epsilon \ar@{ >->}[r] & V \times V} is the result of pulling back the usual ``external'' membership relation \diag{\in_V \ar@{ >->}[r] & V \times \spower(V)} along $\id \times {\rm Ext}$.

\reftheo{existmodelsetth} partly owes its applicability to the fact that the theory of the internal model $(V, \epsilon)$ of {\bf IZF} or {\bf CZF} corresponds precisely to what is true in the categorical logic of \ct{E} for the object $V$ and its external membership relation $\in$. This, in turn, corresponds to a large extent to what is true in the categorical logic of \ct{E} for the higher arithmetic types. Indeed, by the isomorphism \func{\rm Ext}{V}{\spower(V)} and its inverse Int, any generalised element \func{a}{X}{V} corresponds to a subobject \diag{{\rm Ext}(a) \ar@{ >->}[r] & X  \times V} with ${\rm Ext}(a) \rTo X$ small, and for two such elements $a$ and $b$, one has that
\begin{itemize}
\item[(i)] $a \in b$ iff $a$ factors through ${\rm Ext}(b)$.
\item[(ii)] $a \subseteq b$ iff the subobject ${\rm Ext}(a)$ of $X \times V$ is contained in ${\rm Ext}(b)$.
\item[(iii)] ${\rm Ext}(\omega) \cong \NN$, the natural numbers object of \ct{E}.
\item[(iv)] ${\rm Ext}(a^b) \cong {\rm Ext}(a)^{{\rm Ext}(b)}$.
\item[(v)] ${\rm Ext}({\cal P}a) \cong \spower({\rm Ext}(a))$.
\end{itemize}
(Properties (i) and (ii) hold by definition; for (iii)-(v), see the proof of Proposition 7.2 in \cite{bergmoerdijk07b}.) Thus, for example, the sentence ``the set of all functions from $\omega$ to $\omega$ is subcountable'' is true in $(V,\epsilon)$ iff the corresponding statement is true for the natural numbers object $\NN$ in the category \ct{E}.

For this reason the realizability model in the effective topos inherits various principles from the ambient category and one immediately concludes:
\begin{coro}{realforIZF}
{\bf IZF} is consistent with the conjunction of the following
axioms: the Axiom of Countable Choice {\bf (AC)}, the Axiom of Relativised Dependent Choice {\bf (RDC)}, the Presentation Axiom {\bf (PA)}, Markov's Principle {\bf (MP)}, Church's Thesis {\bf (CT)}, the Uniformity Principle {\bf (UP)}, Unzerlegbarkeit {\bf (UZ)}, Independce of Premisses for Sets and Numbers {\bf (IP)}, {\bf (IP$_\omega$)}.\footnote{A precise formulation of these principles can be found in Appendix A.}
\end{coro}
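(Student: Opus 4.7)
The plan is to instantiate \reftheo{effascatwsmallmaps} with $(\ct{E},\smallmap{S}) = (\mathbf{Sets}, \smallmap{S}_\kappa)$, where $\smallmap{S}_\kappa$ consists of those maps between sets all of whose fibres have cardinality below some fixed inaccessible $\kappa$. The classical category of sets satisfies both \textbf{(M)} and \textbf{(PS)}, so by \reftheo{effascatwsmallmaps} the pair $(\Eff_\mathbf{Sets}, (\smallmap{S}_\kappa)_\mathbf{Sets})$ is again a predicative category with small maps satisfying \textbf{(M)} and \textbf{(PS)}, and one identifies $\Eff_\mathbf{Sets}$ with Hyland's effective topos $\Eff$. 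Clause (i) of \reftheo{existmodelsetth} then yields a model $(V,\epsilon)$ of \textbf{IZF} living inside $\Eff$.

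It remains to verify each of the nine realizability principles in $(V,\epsilon)$. The central tool is the dictionary (i)--(v) from the introduction, which identifies ${\rm Ext}(\omega)$ with $\NN$ and realises power-sets and function-spaces in $(V,\epsilon)$ as their categorical counterparts in $\Eff$. Under this dictionary each of \textbf{(MP)}, \textbf{(CT)}, \textbf{(UP)}, \textbf{(UZ)}, \textbf{(IP)}, \textbf{(IP$_\omega$)} becomes an assertion in the internal logic of $\Eff$ about $\NN$, $\NN^\NN$, $\spower(\NN)$, and similar higher-type objects. Each such assertion is well known to be valid in $\Eff$, amounting to Kreisel--Troelstra realizability of the corresponding higher-order arithmetical sentence (as recalled in the discussion of Hyland's theorem in the introduction). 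I would therefore dispatch these six principles one by one, exhibiting in each case the realizer prescribed by the standard realizability clauses.

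For the choice principles \textbf{(AC)}, \textbf{(RDC)}, \textbf{(PA)}, the extra ingredient is that $\NN$, and more generally every small object built out of partitioned assemblies, is projective in $\Eff$ with respect to small surjections: a realizer of totality for a small map with codomain $\NN$ is already a Kleene index of a section. By (iii) and (iv), an $\omega$-indexed family of inhabited sets in $(V,\epsilon)$ corresponds in $\Eff$ to a small map $p\colon E \rTo \NN$ with inhabited fibres, and projectivity of $\NN$ produces the required choice function; this handles \textbf{(AC)}, and iterating the argument along the tree of realized choices handles \textbf{(RDC)}. For \textbf{(PA)}, I would exhibit, for every small object of $\Eff$, a small cover by a partitioned-assembly-like projective object, and translate back through the dictionary to obtain, for every set in $(V,\epsilon)$, a surjection from a base on which internal choice holds.

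The main obstacle I anticipate is not any single verification but the careful translation of each set-theoretic principle into an internal statement in $\Eff$ about $V$, $\epsilon$, and thence about $\NN$ and $\spower$ via (i)--(v); once the translation is in place, each principle reduces either to a routine realizability computation or to the projectivity of small partitioned assemblies in $\Eff$ that is already at the heart of the standard verification of choice.
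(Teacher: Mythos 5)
Your overall strategy---instantiating \reftheo{effascatwsmallmaps} at $(\Sets,\smallmap{S}_\kappa)$, obtaining $V$ inside $\Eff$ via \reftheo{existmodelsetth}(i), and transferring principles through the dictionary (i)--(v)---is the paper's strategy, and your treatment of {\bf (AC)} and {\bf (RDC)} (internal projectivity of $\NN\cong{\rm Ext}(\omega)$ in $\Eff$), of {\bf (PA)} (every small object covered by a projective partitioned assembly), and of {\bf (MP)} and {\bf (CT)} (genuinely statements about $\omega$, hence reducible through ${\rm Ext}(\omega)\cong\NN$ to known facts about $\Eff$) matches \refprop{princinmodelforizf}.

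The gap is in your handling of {\bf (UP)}, {\bf (UZ)}, {\bf (IP)} and {\bf (IP$_\omega$)}. As set-theoretic principles these contain an unbounded quantifier $\forall x$ ranging over the entire universe, i.e., over the object $V$ itself, not over $\spower(\NN)$ or any other fixed higher type. The dictionary (i)--(v) only identifies ${\rm Ext}(a)$ for \emph{individual} elements $a$ of $V$ with objects of $\Eff$; it gives no control over how a quantifier over all of $V$ is realized, so these four principles do not ``become assertions about $\NN$, $\NN^\NN$, $\spower(\NN)$'' and cannot be dispatched by citing Kreisel--Troelstra realizability of higher-order arithmetic. The missing idea, which is the heart of the paper's argument, is that $V$ is a \emph{uniform} object of $\Eff$: a single natural number realizes the existence of every element of $V$, so that $n \Vdash \forall x\,\phi(x)$ holds iff $n \Vdash \phi(a)$ for every $a$, with no dependence of the realizer on $a$; the four principles then follow at once (using classical logic in the metatheory for {\bf (IP)} and {\bf (IP$_\omega$)}). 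The paper establishes uniformity by recalling that $V$ is a quotient of the W-type $W(\rho)$ of a representation $\rho\colon(D,\delta)\to(T,\tau)$ that can be chosen with uniform codomain ($\tau\equiv 0$), noting that this W-type is computed as in assemblies, and invoking the recursion theorem to produce one index $f$ satisfying $f=\langle 0,\lambda n.f\rangle$ which realizes every well-founded tree. Without this step your argument does not establish the last four principles.
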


Of course, \refcoro{realforIZF} has also been proved directly by realizability \cite{friedman73, mccarty84}; however, it is a basic example which illustrates the general theme, and on which there are many variations. For example, our proof of \reftheo{effascatwsmallmaps} is elementary (in the proof-theoretic sense), hence can be used to prove relative consistency results. If we take for \ct{E} the syntactic category of definable classes in the theory {\bf CZF}, we obtain Rathjen's realizability interpretation of {\bf CZF} \cite{rathjen06}, and deduce:
\begin{coro}{realforCZF} {\rm \cite{rathjen06}}
If {\bf CZF} is consistent, then so is {\bf CZF} combined with the conjunction of the following axioms: the Axiom of Countable Choice {\bf (AC)}, the Axiom of Relativised Dependent Choice {\bf (RDC)}, Markov's Principle {\bf (MP)}, Church's Thesis {\bf (CT)}, the Uniformity Principle {\bf (UP)} and Unzerlegbarkeit {\bf (UZ)}.
\end{coro}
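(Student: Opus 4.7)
The proof of \refcoro{realforCZF} should parallel that of \refcoro{realforIZF}, but be carried out relatively to a predicative base so as to give a relative consistency statement rather than an absolute one. The plan is to apply \reftheo{effascatwsmallmaps} to a predicative category with small maps built syntactically from CZF itself, and then use the correspondence between the internal model and the categorical logic of the ambient category to verify each of the listed principles in $V$.

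First, I would take $\ct{E}$ to be the syntactic category of definable classes of CZF, with $\smallmap{S}$ the class of those maps all of whose fibres are sets, postponing to a later section the verification that $(\ct{E}, \smallmap{S})$ is a predicative category with small maps satisfying $\textbf{(F)}$. By \reftheo{effascatwsmallmaps}, the realizability construction produces a predicative category with small maps $(\Eff_\ct{E}, \smallmap{S}_\ct{E})$ also satisfying $\textbf{(F)}$; and \reftheo{existmodelsetth}(ii) then yields an internal model $(V, \epsilon)$ of CZF inside it. Because every step of this construction is elementary, a derivation of $\bot$ from CZF together with the listed principles would translate mechanically into a derivation of $\bot$ from CZF alone, so it suffices to verify each of the principles in $(V, \epsilon)$.

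Second, for each principle I would use the correspondences (i)-(v) from the introduction to reduce its validity in $(V, \epsilon)$ to a statement about the natural numbers object $\NN$, the power class $\spower(\NN)$, and function objects built from them in $\Eff_\ct{E}$. For $\textbf{(CT)}$ and $\textbf{(MP)}$, which concern only functions $\NN \to \NN$, this is automatic from (iii) and (iv); $\textbf{(UP)}$ reduces via (v) to a statement about $\spower(\NN)$; $\textbf{(AC)}$ and $\textbf{(RDC)}$ reduce to internal countable choice principles over $\NN$; and $\textbf{(UZ)}$ becomes the indecomposability of $\NN$ as an object of $\Eff_\ct{E}$.

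Finally, each translated statement is to be checked in the internal logic of $\Eff_\ct{E}$ by exhibiting the standard realizers familiar from Kleene/Kreisel--Troelstra realizability (an identity index for $\textbf{(CT)}$, unbounded search for $\textbf{(MP)}$, a universal constant index witnessing uniformity for $\textbf{(UP)}$, and primitive-recursive codes for the choice principles). The main obstacle will be making this last step fully rigorous over an arbitrary predicative base: one must verify that the internal forcing relation of $\Eff_\ct{E}$, when restricted to arithmetical or analytical statements, agrees with the usual Kleene realizability interpretation inside $\ct{E}$, so that the standard constructive recursion-theoretic witnesses can be imported without invoking any non-CZF principle of the meta-theory. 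Once this coincidence is established, the combined consistency of CZF with $\textbf{(AC)}$, $\textbf{(RDC)}$, $\textbf{(MP)}$, $\textbf{(CT)}$, $\textbf{(UP)}$ and $\textbf{(UZ)}$ follows.
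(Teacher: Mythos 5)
Your proposal matches the paper's argument: the paper likewise takes $\ct{E}$ to be the syntactic category of definable classes of {\bf CZF}, applies \reftheo{effascatwsmallmaps} and \reftheo{existmodelsetth}(ii) to obtain Rathjen's realizability interpretation, and uses the elementary (proof-theoretic) nature of the construction for relative consistency, with the listed principles verified exactly as in the {\bf IZF} case via the correspondences (i)--(v) and the uniformity of $V$. The only cosmetic difference is that the paper leaves the realizer-level verification implicit by appeal to the parallel argument of \refprop{princinmodelforizf}, whereas you spell it out.
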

(We also recover the same result for {\bf IZF} within our framework.)

Another possibility is to mix \reftheo{effascatwsmallmaps} with the similar construction for sheaves \cite{bergmoerdijk07a}. This shows that models of set theory ({\bf IZF} or {\bf CZF}) also exist for various other notions of realizability, such as modified realizability in the sense of \cite{vanoosten97, birkedalvanoosten02} or Kleene-Vesley's function realizability \cite{kleenevesley65}. We will discuss this in some more detail in Section 5 below.

Inside Hyland's effective topos, or more generally, in categories of the
form  $\Eff_\ct{E}$ (cf.~\reftheo{effascatwsmallmaps}), other classes of small maps exist, which are not obtained from an earlier class of small maps in \ct{E} by \reftheo{effascatwsmallmaps}, but nonetheless satisfy the conditions sufficient to apply our theorem from \cite{bergmoerdijk07b} yielding models of set theory (cf.~\reftheo{existmodelsetth} above). Following the work of the first author in \cite{berg06}, we will present in some detail one particular
case of this phenomenon,  based on the notion of modest set \cite{hyland88,hylandrobinsonrosolini90}. Already in \cite{joyalmoerdijk95} a class \smallmap{T} inside the effective topos was considered, consisting of those maps which have subcountable fibres (in some suitable sense).  This class does not satisfy the axioms from
\cite{joyalmoerdijk95} necessary to provide a model for {\bf IZF}. However, it was shown in \cite{berg06} that this class \smallmap{T} does satisfy a set of
axioms sufficient to provide a model of the predicative set theory {\bf CZF}.
\begin{theo}{subcountableineff} {\rm \cite{joyalmoerdijk95,berg06}}
The effective topos $\Eff$ and its class of subcountable morphisms \smallmap{T} form a predicative category with small maps. Moreover, \smallmap{T} satisfies the axioms {\bf (M)} and {\bf (F)}.
\end{theo}
We will show that the corresponding model of set theory (\reftheo{existmodelsetth}) fits into the general framework of this
series of papers, and investigate some  of its logical properties, as well
as its relation to some earlier models of Friedman, Streicher and Lubarsky \cite{friedman77,streicher05,lubarsky06}. In particular, we prove:
\begin{coro}{subcimplforCZF}
{\bf CZF} is consistent with the conjunction of the following
axioms: Full separation, the subcountability of all sets, as well as {\bf (AC)}, {\bf (RDC)}, {\bf (PA)}, {\bf (MP)}, {\bf (CT)}, {\bf (UP)}, {\bf (UZ)}, {\bf (IP)} and {\bf (IP$_\omega$)}.
\end{coro}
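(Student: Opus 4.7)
The plan is to apply \reftheo{existmodelsetth}(ii) to the predicative category with small maps $(\Eff, \smallmap{T})$ supplied by \reftheo{subcountableineff}. Because \smallmap{T} satisfies {\bf (F)}, the resulting internal model $(V, \epsilon)$ validates all the axioms of {\bf CZF}; because \smallmap{T} also satisfies {\bf (M)}, every monomorphism is small, and this is known to upgrade Bounded Separation in the internal model to Full Separation for arbitrary formulas. That disposes of the first two items on the list.

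Subcountability of every set in $(V,\epsilon)$ will be immediate from the choice of small maps. Indeed, each $a \in V$ corresponds via ${\rm Ext}$ to a small subobject of $V$, which in this setting just means a subcountable one; using ${\rm Ext}(\omega) \cong \NN$ together with the transfer properties (iii)--(v) listed in the introduction, the external subcountability of ${\rm Ext}(a)$ translates into the internal statement that $a$ is the image of a subset of $\omega$. In short, ``every set is subcountable'' is built into the very definition of \smallmap{T}.

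For the realizability principles {\bf (AC)}, {\bf (RDC)}, {\bf (PA)}, {\bf (MP)}, {\bf (CT)}, {\bf (UP)}, {\bf (UZ)}, {\bf (IP)} and {\bf (IP$_\omega$)}, I would use the same transfer mechanism as in \refcoro{realforIZF}. Each of these principles, once unwound, is a statement about $\omega$, $\omega^\omega$ and ${\cal P}\omega$, and by (iii)--(v) its truth in $(V,\epsilon)$ reduces to the truth of the corresponding statement about $\NN$ and its small power object in $\Eff$. All of these are known to hold in the higher-order arithmetic of $\Eff$, by Hyland's original analysis combined with Kreisel--Troelstra realizability for the principles involving ${\cal P}\NN$ (most notably {\bf (UP)} and {\bf (UZ)}); so we need only cite those validities and check that the translations land in the expected shape.

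The main obstacle will be {\bf (PA)} and {\bf (RDC)}, whose formulations quantify over arbitrary sets rather than only over the higher arithmetic types, and therefore cannot be read off the behaviour of $\NN$ alone. For these I would exploit the distinctive feature of \smallmap{T}, namely that every small object is by definition a subquotient of $\NN$; this produces, uniformly for each set in $V$, a projective cover by a subset of $\omega$, which is precisely what {\bf (PA)} demands and what the standard inductive construction used to verify {\bf (RDC)} from {\bf (AC)} and {\bf (PA)} requires. Once these additional points have been verified inside the internal logic of $\Eff$, the conjunction of all the listed axioms is valid in $(V, \epsilon)$, giving the claimed relative consistency.
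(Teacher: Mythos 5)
Your top-level strategy coincides with the paper's: feed the pair $(\Eff, \smallmap{T})$ of \reftheo{subcountableineff} into \reftheo{existmodelsetth} to obtain a model $U$ of {\bf CZF}; {\bf (F)} gives Subset Collection, {\bf (M)} gives Full Separation, and the subcountability of all sets is immediate from the definition of $\smallmap{T}$ (this is \refprop{nopowinV}). However, your treatment of {\bf (UP)}, {\bf (UZ)}, {\bf (IP)} and {\bf (IP$_\omega$)} has a genuine gap. These principles quantify over \emph{all sets} of the model ($\forall x\, \exists y \,\epsilon\, \omega \ldots$, $\exists x\,(\neg\theta\to\psi)$, etc.), so they are not statements about $\omega$, $\omega^\omega$ and ${\cal P}\omega$ alone, and the transfer properties (iii)--(v) from the introduction do not reduce them to the higher-order arithmetic of $\Eff$; citing Kreisel--Troelstra realizability for ${\cal P}\NN$ yields the uniformity principle for subsets of $\NN$, not the set-theoretic {\bf (UP)}. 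You flag unbounded quantification as a difficulty only for {\bf (PA)} and {\bf (RDC)} --- which are in fact the easy cases, being inherited from $\Eff$ resp.\ from projectivity of suitable covers --- and miss it where it actually bites. The missing idea is that of \refprop{principlesinV}: the underlying object $U$ is \emph{uniform} in $\Eff$ (all its elements share a common realizer), which the paper proves by choosing the representation of $\smallmap{T}$ to be $\in_{\NN} \to {\cal P}_{\neg\neg}(\NN) = \nabla{\cal P}\NN$, whose codomain is uniform, and tracing uniformity through the associated W-type and its quotient. Only this uniformity makes the unbounded quantifiers in {\bf (UP)}, {\bf (UZ)}, {\bf (IP)}, {\bf (IP$_\omega$)} cost nothing in realizers.

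Two smaller points. For {\bf (PA)} you invoke a ``projective cover by a subset of $\omega$'', but arbitrary subobjects of $\NN$ in $\Eff$ are \emph{not} internally projective; one needs \reflemm{subcmaps} (via Shanin's principle) to refine the cover to a $\neg\neg$-closed subobject of $\NN$, which is a partitioned assembly and hence projective. And whereas the paper obtains {\bf (CT)} and {\bf (MP)} by identifying $U$ with the hereditarily subcountable sets inside McCarty's model $V$, your direct transfer via property (iii) does work for these two, since they are genuinely arithmetic statements; that part of your argument is sound.
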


\noindent
{\bf Acknowledgements:} We would like to thank Thomas Streicher and Jaap van Oosten for comments on an earlier version of this paper, and for making \cite{vanoosten08} available to us.

\section{The category of assemblies}

Recall that our main aim (\reftheo{effascatwsmallmaps}) is to construct for a predicative category with small maps $(\ct{E}, \smallmap{S})$ the realizability category $(\Eff_\ct{E}, \smallmap{S}_\ct{E})$, and show it is again a predicative category with small maps. For this and other purposes, the description of $\Eff$ as an exact (ex/reg) completion of a category of assemblies \cite{carbonifreydscedrov88}, rather than Hyland's original description, is useful. A similar remark applies to the effective topos $\Eff[\pca{A}]$ defined by an arbitrary small pca $\pca{A}$. In \cite{bergmoerdijk07b} we showed that the class of predicative categories with small maps is closed under exact completion. More precisely, we formulated a weaker version of the axioms (a ``category with display maps''; the notion is also recapitulated in Appendix B), and showed that if $(\ct{F},\smallmap{T})$ is a pair satisfying the weaker axioms, then in the exact completion $\overline{\ct{F}}$ of \ct{F}, there is a natural class of arrows $\overline{\smallmap{T}}$, depending on \smallmap{T}, such that the pair $(\overline{\ct{F}}, \overline{\smallmap{T}})$ is a predicative category with small maps. Therefore our strategy in this section will be to construct a category of assemblies relative to the pair $(\ct{E}, \smallmap{S})$ and show it is a category with display maps (strictly speaking, we only need to assume that $(\ct{E}, \smallmap{S})$ is itself a category with display maps for this). Its exact completion will then be considered in the next section.

In this section, $(\ct{E}, \smallmap{S})$ is assumed to be a predicative category with small maps.
\begin{defi}{assembly}
An \emph{assembly} (over \ct{E}) is a pair $(A, \alpha)$ consisting of an object $A$ in \ct{E} together with a relation $\alpha \subseteq \NN \times A$, which is surjective:
\[ \forall a \in A \, \exists n \in \NN \, (n, a) \in \alpha. \]
The natural numbers $n$ such that $(n, a) \in \alpha$ are called the \emph{realizers} of $a$, and we will frequently write $n \in \alpha(a)$ instead of $(n, a) \in \alpha$.

A morphism \func{f}{B}{A} in \ct{E} is a morphism of assemblies $(B, \beta) \to (A, \alpha)$ if the statement
\begin{quote}
``There is a natural number $r$ such that for all $b$ and $n \in \beta(b)$, the expression $r(n)$ is defined and $r(n) \in \beta(fb)$.''
\end{quote}
is valid in the internal logic of \ct{E} (note that this makes sense, as the internal logic of \ct{E} is a version of {\bf HA}, and therefore strong enough to do all basic recursion theory). A number $r$ witnessing the above statement is said to \emph{track} (or \emph{realize}) the morphism $f$. The resulting category will be denoted by $\Asm_\ct{E}$, or simply $\Asm$.
\end{defi}

We investigate the structure of the category $\Asm_\ct{E}$.

\emph{$\Asm_\ct{E}$ has finite limits.} The terminal object is $(1, \eta)$, where $1 = \{ * \}$ is a one-point set and $n \in \eta(*)$ for every $n$. The pullback $(P, \pi)$ of $f$ and $g$ as in
\diag{(P, \pi) \ar[r] \ar[d] & (B, \beta) \ar[d]^f \\
(C, \gamma) \ar[r]_g & (A, \alpha) }
can be obtained by putting $P = B \times_A C$ and
\[ n \in \pi(b, c) \Leftrightarrow n_0 \in \beta(b) \mbox{ and } n_1 \in \gamma(c) . \]

\emph{Covers in $\Asm_\ct{E}$.} A morphism \func{f}{(B, \beta)}{(A, \alpha)} is a cover if, and only if, the statement
\begin{quote}
``There is a natural number $s$ such that for all $a \in A$ and $n \in \alpha(a)$ there exists a $b \in B$ with $f(b) = a$ and such that the expression $s(n)$ is defined and $s(n) \in \beta(b)$.''
\end{quote}
holds in the internal logic of \ct{E}. From this it follows that covers are stable under pullback in \Asm.

\emph{$\Asm_\ct{E}$ has images.} A morphism \func{f}{(B, \beta)}{(A, \alpha)} is monic in $\Asm$ if, and only if, the underlying morphism \func{f}{B}{A} is monic in \ct{E}. (This means that if $(R, \rho)$ is a subobject of $(A, \alpha)$, then $R$ is also a subobject of $A$.) Hence the image $(I, \iota)$ of a map \func{f}{(B, \beta)}{(A, \alpha)} as in
\diag{(B, \beta) \ar[rr]^f \ar@{->>}[dr]_e & & (A, \alpha) \\
& (I, \iota) \ar@{ >->}[ur]_m & }
can be obtained by letting $I \subseteq A$ be the image of $f$ in \ct{E}, and
\[ n \in \iota(a) \Leftrightarrow \exists b \in B \, p(b)=a \mbox{ and } n \in \beta(b). \]
One could also write: $\iota(a) = \bigcup_{b \in p^{-1}(a)} \beta(b)$. We conclude that $\Asm$ is regular.

\emph{$\Asm_\ct{E}$ is Heyting.} For any diagram of the form
\diag{ (S, \sigma) \ar@{ >->}[d] & & \\
(B, \beta) \ar[rr]_f & & (A, \alpha) }
we need to compute $(R, \rho) = \forall_f(S, \sigma)$. We first put $R_0 = \forall_f S \subseteq A$, and let $\rho \subseteq \NN \times R_0$ be defined by
\[ n \in \rho(a) \Leftrightarrow n_0 \in \alpha(a) \mbox{ and } \forall b \in f^{-1}(a), m \in \beta(b) \, ( \, n_1(m) \downarrow \mbox{ and } n_1(m) \in \sigma(b) \, ). \]
If we now put
\[ R = \{ a \in R_0 \, : \, \exists n \, n \in \rho(a) \} \]
and restrict $\rho$ accordingly, the subobject $(R, \rho)$ will be the result of universal quantifying $(S, \sigma)$ along $f$.

\emph{$\Asm_\ct{E}$ is positive.} The sum $(A, \alpha) + (B, \beta)$ is simply $(S, \sigma)$ with $S = A+B$ and
\[ n \in \sigma(s) \Leftrightarrow n \in \alpha(s) \mbox{ if } s \in A \mbox{, and } n \in \beta(s) \mbox{ if } s \in B. \]

We have proved:
\begin{prop}{asmisHeyting}
The category $\Asm_\ct{E}$ of assemblies relative to \ct{E} is a positive Heyting category.
\end{prop}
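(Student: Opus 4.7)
The plan is to verify, in order, the four ingredients that together constitute a positive Heyting category: (a) finite limits, (b) stable image factorisations (regularity), (c) right adjoints to pullback along arbitrary maps on subobjects (the Heyting structure), and (d) stable disjoint finite coproducts. The preceding informal computations in the excerpt already furnish the candidate constructions, so what remains is to verify each universal property by translating the corresponding universal property in $\ct{E}$ (which is itself a positive Heyting category, being a predicative category with small maps) into the assembly setting by constructing trackers out of standard recursive operations (pairing, composition, $S^n_m$).

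First I would handle the terminal object $(1,\eta)$ and pullbacks. For $(1,\eta)$, any constant recursive function tracks the unique morphism from any $(X,\xi)$, and uniqueness reduces to uniqueness in $\ct{E}$. For the pullback $(P,\pi)$ as displayed, given a competing cone $(X,\xi) \to (B,\beta), (C,\gamma)$ tracked by $r$ and $s$ respectively, the unique $\ct{E}$-morphism $(X,\xi) \to (P,\pi)$ is tracked by $n \mapsto \langle r(n), s(n)\rangle$, which is recursive in $r$ and $s$. Equalisers then arise as pullbacks along a diagonal, or directly as subobjects with the inherited realizer relation. Regularity follows by verifying that the cover/mono factorisation given in the excerpt is indeed orthogonal and universal, and that the internal-logic characterisation of covers is stable under pullback: a tracker $s$ for a cover $(B,\beta) \twoheadrightarrow (A,\alpha)$ yields a tracker for its pullback along $g\colon(C,\gamma)\to(A,\alpha)$ by sending a realizer $n$ of $(b,c) \in P$ to $\langle s(n_0), n_1\rangle$, after noting that $s(n_0)$ realizes $b$ over $g(c) = f(b)$.

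The Heyting step requires the most care. For a subobject $(S,\sigma) \rightarrowtail (B,\beta)$ and $f\colon (B,\beta)\to(A,\alpha)$, I would show that the object $(R,\rho)$ defined in the excerpt genuinely represents $\forall_f(S,\sigma)$ by directly establishing the adjunction: for any subobject $(T,\tau) \rightarrowtail (A,\alpha)$, one has $f^\ast(T,\tau) \leq (S,\sigma)$ in $\mathrm{Sub}((B,\beta))$ if and only if $(T,\tau) \leq (R,\rho)$ in $\mathrm{Sub}((A,\alpha))$. The underlying subobject inequality in $\ct{E}$ follows from the Heyting structure there; on realizers, a tracker $r$ witnessing $f^\ast T \leq S$ is curried via $S^n_m$ into a tracker $\langle n \mapsto n_0,\; m \mapsto (k \mapsto r(\langle m,k\rangle))\rangle$ witnessing $T \leq R$, and conversely evaluation recovers the original. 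The cut-down from $R_0$ to $R$ is precisely what is needed so that $\rho$ is surjective, hence $(R,\rho)$ really is an assembly.

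Positivity is then straightforward: the coproduct $(A,\alpha)+(B,\beta)$ is built on $A+B$ in $\ct{E}$, the identity tracks both insertions, and the mediating morphism out of a pair of competing maps tracked by $r$ and $s$ is tracked by the obvious case-split, which is recursive by a standard primitive recursion on the tag bit. Stability and disjointness of coproducts transfer from $\ct{E}$, since the underlying-object functor $\Asm \to \ct{E}$ preserves the relevant pullbacks and pushouts. I expect the main obstacle to be the Heyting step, namely keeping straight the interplay between the pairing of realizers (for the implicit conjunction in the definition of $\rho$) and the recursive function (for the universal quantifier), and verifying that the adjunction truly holds once one has restricted from $R_0$ to $R$.
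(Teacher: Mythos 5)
Your proposal is correct and takes essentially the same route as the paper: the proposition is proved there precisely by exhibiting the terminal object, pullbacks, the cover/mono factorisation, the formula for $\forall_f$, and the sums with the very realizer relations you cite, the universal properties being verified by the same tracker constructions (pairing for limits, currying via the s-m-n theorem for the adjunction, case-split for coproducts) that you spell out. One small correction to your sketch of pullback-stability of covers: the tracker must take as input a realizer $n$ of $c \in (C,\gamma)$, first push it through a tracker of $g$ to obtain a realizer of $g(c) \in (A,\alpha)$, and only then apply $s$, rather than starting from a realizer of an element of the pullback as your formula $\langle s(n_0), n_1\rangle$ suggests.
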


The next step is to define the display maps in the category of assemblies. The idea is that a displayed assembly is an object $(B, \beta)$ in which both $B$ and the subobject $\beta \subseteq \NN \times B$ are small. When one tries to define a family of such displayed objects indexed by an assembly $(A, \alpha)$ in which neither $A$ nor $\alpha$ needs to be small, one arrives at the concept of a standard display map. To formulate it, we need a piece of notation.
\begin{defi}{bracketing} Let $(B, \beta)$ and $(A, \alpha)$ be assemblies and \func{f}{B}{A} be an arbitrary map in \ct{E}. We construct a new assembly $(B, \beta[f])$ by putting
\[ n \in \beta[f](b) \Leftrightarrow n_0 \in \beta(b) \mbox{ and } n_1 \in \alpha(fx). \]
\end{defi}

\begin{rema}{somethaboutstdispmaps}
Note that we obtain a morphism of assemblies of the form $(B, \beta[f]) \to (A, \alpha)$, which, by abuse of notation, we will also denote by $f$. Moreover, if $f$ was already a morphism of assemblies it can now be decomposed as
\diag{ (B, \beta) \ar[r]^\cong & (B, \beta[f]) \ar[r]^f & (A, \alpha). }
\end{rema}

\begin{defi}{displayinAsm} A morphism of assemblies of the form $(B, \beta[f]) \to (A, \alpha)$ will be called a \emph{standard display map}, if both $f$ and the mono $\beta \subseteq \NN \times B$ are small in \ct{E} (the latter is equivalent to $\beta \to B$ being small, or $\beta(b)$ being a small subobject of $\NN$ for every $b \in B$). A \emph{display map} is a morphism of the form
\diag{ W \ar[r]^\cong & V \ar[r]^f & U, }
where $f$ is a standard display map. We will write $\smallmap{D}_\ct{E}$ for the class of display maps in $\Asm_\ct{E}$.
\end{defi}

\begin{lemm}{somepropstdispmaps}
\begin{enumerate}
\item Let \func{f}{(B, \beta[f])}{(A, \alpha)} be a standard display map, and \func{g}{(C, \gamma)}{(A, \alpha)} be an arbitrary morphism of assemblies. Then there is a pullback square
\diag{(P, \pi[k]) \ar[r]^h \ar[d]_k & (B, \beta[f]) \ar[d]^f \\
(C, \gamma) \ar[r]_g & (A, \alpha) }
in which $k$ is again a standard display map.
\item The composite of two standard display maps is a display map.
\end{enumerate}
\end{lemm}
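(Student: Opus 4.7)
\emph{Part (1).} My plan is to build the pullback directly at the level of \ct{E} and then equip it with a realizability structure already in the standard form $\pi[k]$. Set $P = B \times_A C$ with projections $h \colon P \to B$ and $k \colon P \to C$; then $k$ is small because $f$ is. Define $\pi \subseteq \NN \times P$ to be the pullback of $\beta \subseteq \NN \times B$ along $h$, so that $n \in \pi(b,c) \Leftrightarrow n \in \beta(b)$; smallness of $\pi \to P$ follows from smallness of $\beta \to B$ and stability of small maps under pullback. Unfolding, one finds $n \in \pi[k](b,c) \Leftrightarrow n_0 \in \beta(b)$ and $n_1 \in \gamma(c)$, which differs from the pullback realizability computed earlier (applied to $f \colon (B, \beta[f]) \to (A, \alpha)$ and $g$) only by the absence of an explicit realizer for $\alpha(fb)$. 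Since $fb = gc$ on $P$ and $g \colon (C, \gamma) \to (A, \alpha)$ is a morphism of assemblies, such a realizer can be produced recursively and uniformly from any realizer in $\gamma(c)$ via the tracker of $g$, and in the opposite direction it is simply discarded. Thus the identity on $P$ lifts to an isomorphism in \Asm between $(P, \pi[k])$ and the generic pullback assembly, making the square a pullback and $k$ a standard display map.

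\emph{Part (2).} Given composable standard display maps $g \colon (C, \gamma[g]) \to (B, \beta[f])$ and $f \colon (B, \beta[f]) \to (A, \alpha)$, my plan is to exhibit the composite $fg$ as a display map by repackaging its realizability. Define $\delta \subseteq \NN \times C$ by
\[ n \in \delta(c) \Leftrightarrow n_0 \in \gamma(c) \text{ and } n_1 \in \beta(gc). \]
Then $\delta \to C$ is small, using smallness of $\gamma \to C$ together with the small map obtained by pulling $\beta \to B$ back along $g$, and $fg \colon C \to A$ is small as a composite of small maps. Unfolding shows that both $\gamma[g]$ and $\delta[fg]$ encode the same three data---a realizer in $\gamma(c)$, one in $\beta(gc)$ and one in $\alpha(fgc)$---grouped by different pairing conventions, and that these conventions are recursively interconvertible. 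Hence the identity on $C$ underlies an isomorphism $(C, \gamma[g]) \cong (C, \delta[fg])$ in \Asm, and $fg$ factors as this isomorphism followed by the standard display map $(C, \delta[fg]) \to (A, \alpha)$.

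The main obstacle I expect is the mismatch of bracketing conventions: the ``natural'' realizability structure coming either from the pullback construction or from the composite never literally has the bracketed form $\pi[k]$ demanded by the definition of a standard display map, so one must pass through an isomorphism of assemblies in each case. What makes this rebracketing uniform and recursive is precisely that the maps involved (namely $g$ in part (1), and both $g$ and $f$ in part (2)) are themselves morphisms of assemblies, so the ``missing'' $\alpha$- or $\beta$-realizers can always be manufactured on the fly by their trackers. Once this is in hand, the remainder is routine bookkeeping, together with the stability of smallness in \ct{E} under the pullbacks, composites and subobjects appearing in the construction.
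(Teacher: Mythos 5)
Your proposal is correct and follows essentially the same route as the paper: in (1) you equip $P = B\times_A C$ with $\pi = h^*\beta$ so that $k$ is standard display and $(P,\pi[k])$ is (isomorphic to) the canonical pullback assembly, and in (2) you rebracket $\gamma[g]$ into $\delta[fg]$ with $\delta(c)$ pairing a $\gamma$-realizer with a $\beta(gc)$-realizer, exactly as in the paper. If anything you are slightly more careful than the text at the one point it glosses over, namely that $(P,\pi[k])$ agrees with the generic pullback of $(B,\beta[f])$ and $(C,\gamma)$ only up to a tracked isomorphism in which the missing $\alpha(fb)$-realizer is manufactured from $\gamma(c)$ via the tracker of $g$ and the identity $fb=gc$.
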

\begin{proof}
(1) We set $P = B \times_A C$ (as usual), and
\[ n \in \pi(b, c) \Leftrightarrow n \in \beta(b), \]
turning $k$ into a standard display map. Moreover, this implies
\[ n \in \pi[k](b, c) \Leftrightarrow n_0 \in \beta(b) \mbox{ and } n_1 \in \gamma(c), \]
which is precisely the usual definition.

(2) Let $(C, \gamma)$, $(B, \beta)$ and $(A, \alpha)$ be assemblies in which $\gamma \subseteq \NN \times C$ and $\beta \subseteq \NN \times B$ are small monos, and \func{g}{C}{B} and \func{f}{B}{A} be display maps in \ct{E}. These data determine a composable pair of standard display maps \func{f}{(B, \beta[f])}{(A, \alpha)} and \func{g}{(C, \gamma[g])}{(B, \beta[f])}, in which
\begin{eqnarray*}
n \in \gamma[g](c) & \Leftrightarrow & n_0 \in \gamma(c) \mbox{ and } n_1 \in \beta[f](gc) \\
& \Leftrightarrow & n_0 \in \gamma(c) \mbox{ and } (n_1)_0 \in \beta(gc) \mbox{ and } (n_1)_1 \in \gamma(fgc).
\end{eqnarray*}
So its composite can be written as
\diag{ (C, \gamma[g]) \ar[r]^{\cong} & (C, \delta[fg]) \ar[r]^{fg} & (A, \alpha),}
where we have defined $\delta \subseteq \NN \times C$ by
\[ n \in \delta(c) \Leftrightarrow n_0 \in \gamma(c) \mbox{ and } n_1 \in \beta(gc). \]
\end{proof}

\begin{coro}{somepropdispmaps}
Display maps are stable under pullback and closed under composition.
\end{coro}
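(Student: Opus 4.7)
The plan is to reduce both closure properties to the corresponding properties for standard display maps established in \reflemm{somepropstdispmaps}, using the defining factorization of any display map as an isomorphism followed by a standard display map.

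For pullback stability, I would start with a display map $f: W \to U$ factored as $W \stackrel{i}{\to} V \stackrel{f_0}{\to} U$ with $i$ an iso and $f_0$ standard, and an arbitrary morphism $g: C \to U$. Pulling back this factorization along $g$ decomposes the pullback into two stacked squares: the square involving $i$ pulls back to an isomorphism (since isos are stable under pullback), and the square involving $f_0$ pulls back to a standard display map by \reflemm{somepropstdispmaps}(1). The resulting composite along the top is an iso followed by a standard display map, i.e., a display map.

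For composition, given composable display maps factored as $f_1 = \mathrm{std}_1 \circ i_1$ and $f_2 = \mathrm{std}_2 \circ i_2$, the composite $f_2 \circ f_1 = \mathrm{std}_2 \circ i_2 \circ \mathrm{std}_1 \circ i_1$ contains an intermediate iso $i_2$ that must be absorbed. My plan is to establish the auxiliary claim that for any standard display map $s: (B, \beta[f]) \to (A, \alpha)$ and any iso $j: (A, \alpha) \stackrel{\cong}{\to} (A', \alpha')$ with underlying iso $\phi: A \to A'$ in $\ct{E}$, one can rewrite $j \circ s = s' \circ j'$, where $s': (B, \beta[\phi f]) \to (A', \alpha')$ is again standard (smallness of $\phi f$ follows from smallness of $f$, while $\beta$ is unchanged) and $j': (B, \beta[f]) \stackrel{\cong}{\to} (B, \beta[\phi f])$ is the iso of assemblies induced by the realizers tracking $\phi$ and $\phi^{-1}$. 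Applying this to $i_2 \circ \mathrm{std}_1$, the composite rewrites as $\mathrm{std}_2 \circ \mathrm{std}_1' \circ j' \circ i_1$; here \reflemm{somepropstdispmaps}(2) exhibits $\mathrm{std}_2 \circ \mathrm{std}_1'$ as a display map, and the leading isos $j' \circ i_1$ compose with the iso part of that display map to yield a single iso on the domain, producing the required factorization.

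The main obstacle, as highlighted above, is the absorption step: verifying that a codomain isomorphism of assemblies can be pushed through a standard display map while preserving the standard-display-map form, at the cost of replacing the domain by an isomorphic assembly. This is a routine calculation with realizers (the tracker for $\phi$ converts $\beta[f]$-realizers into $\beta[\phi f]$-realizers, and the tracker for $\phi^{-1}$ gives the inverse), but it is the only nontrivial piece of bookkeeping; once it is in place, both parts of the corollary follow immediately from \reflemm{somepropstdispmaps}.
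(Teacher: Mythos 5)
Your proposal is correct and follows essentially the same route as the paper: both parts are reduced to \reflemm{somepropstdispmaps} via the iso-then-standard factorization, the one nontrivial point in each case being that a standard display map followed by an isomorphism is again a display map. The only difference is that you establish this absorption step by a direct realizer computation (pushing the codomain iso through to an iso on the domain), whereas the paper obtains it by pulling the standard display map back along the inverse isomorphism using \reflemm{somepropstdispmaps}(1); both verifications are routine and correct.
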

\begin{proof}
Stability of display maps under pullback follows immediately from item 1 in the previous lemma. To show that they are also closed under composition, we observe first that a morphism $f$ which can be written as a composite
\diag{ W \ar[r]^h & V \ar[r]^g_\cong & U, }
where $h$ is a standard display map and $g$ is an isomorphism, is a display map. For it follows from the previous lemma that there exists a pullback square
\diag{ Q \ar[r]^p_\cong \ar[d]_q & W \ar[d]^h \\
U \ar[r]_{g^{-1}}^\cong & V }
in which $q$ is a standard display map. Therefore $f = qp^{-1}$ is a display map. Now the result follows from the lemma above.
\end{proof}

\begin{prop}{axiomsfordispinasm}
The class of display maps in the category $\Asm_\ct{E}$ of assemblies as defined above satisfies the axioms {\bf (A1), (A3-5), (A7-9)}, and {\bf (A10)} for a class of display maps, as well as {\bf (NE)} and {\bf (NS)} (see Appendix B).
\end{prop}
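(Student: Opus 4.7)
The plan is to verify each axiom by transferring the corresponding property of $(\ct{E}, \smallmap{S})$ through the canonical factorization of a standard display map $(B, \beta[f]) \to (A, \alpha)$ into its two smallness data, namely the map $f$ in $\smallmap{S}$ and the small mono $\beta \hookrightarrow \NN \times B$. Pullback stability and closure under composition are already settled by \reflemm{somepropstdispmaps} and \refcoro{somepropdispmaps}, which handle (A1) and the composition component of (A5). The axioms asserting that all isomorphisms are display maps (and that identities between appropriately small assemblies are) follow directly from \refdefi{displayinAsm} and \refrema{somethaboutstdispmaps}. For the sum axiom (A3), the explicit sum construction on assemblies described above combines with closure of $\smallmap{S}$ under finite sums in $\ct{E}$ to give the required closure, since the realizer relation on $(A, \alpha) + (B, \beta)$ is again a small mono in $\ct{E}$ whenever the summands' relations are.

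For the Heyting and quantifier axioms among (A7)--(A9), one checks that the image, $\exists_f$, and $\forall_f$ constructions spelled out in the preceding pages preserve both smallness conditions. In each case the underlying morphism in $\ct{E}$ stays in $\smallmap{S}$ by the corresponding axiom of $(\ct{E}, \smallmap{S})$, while the new realizer relation is defined from the old by a bounded first-order formula (as is visible in the formula for $\rho$ used to construct $\forall_f$) and therefore remains a small subobject of $\NN \times (-)$. The axiom (A10), which typically encodes a form of descent or quotient closure for display maps, is reduced to the corresponding property in $\ct{E}$ by reading off the definitions, since the forgetful functor $\Asm_\ct{E} \to \ct{E}$ preserves the exactness data involved.

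For (NE) we take the assembly $(\NN, \nu)$ with $\nu(n) = \{n\}$, whose zero and successor are tracked by the evident partial recursive functions; the universal property transfers directly from the natural numbers object of $\ct{E}$. Then (NS) holds because the unique map $(\NN, \nu) \to (1, \eta)$ is a standard display map: the underlying map $\NN \to 1$ is small by the natural-numbers axiom in $\ct{E}$, and $\nu \hookrightarrow \NN \times \NN$ is just the diagonal, which is a small mono between small objects.

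The main obstacle I anticipate is the collection-style axiom among (A7)--(A9). Given a cover $e \colon (C, \gamma) \to (A, \alpha)$ in $\Asm_\ct{E}$ lying over some standard display map, one must produce a standard display cover of $(A, \alpha)$ that refines $e$ and tracks it by a single uniform realizer. This forces one to invoke collection in $\ct{E}$ to obtain the small indexing map, and at the same time to construct a small realizer relation that uniformly witnesses the chosen refinement. The delicate point is the interplay between collection in $\ct{E}$ and the requirement that tracking numbers in $\NN$ be assigned pointwise in a way compatible with the ambient small-map structure; keeping both components of the resulting standard display map small simultaneously is where the most care will be required.
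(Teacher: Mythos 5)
Your treatment of (A1), (A3), (A5), (NE) and (NS) matches the intended argument, but the proposal has a genuine hole at its hardest point: you correctly single out the Collection axiom {\bf (A7)} as the main obstacle and then stop at describing the difficulty instead of resolving it. The missing ideas are two. First, one reduces to \emph{partitioned} assemblies (\reflemm{propofpartasm}): every display map is covered by a display map between partitioned assemblies, and for those ``display'' just means the underlying map is small, which makes the realizer bookkeeping tractable. Second --- and this is exactly the ``delicate point'' you name without addressing --- given a cover \func{q}{(E,\eta)}{(B,\beta)}, one must quantify over the tracking data before invoking collection in \ct{E}: form the object $T$ of all numbers $t$ uniformly witnessing that $q$ is a cover, make it a partitioned assembly via $\theta(t)=t$, form $E' = \{(e,b,t) : q(e)=b,\ t(\beta b)\in\eta(e)\}$, and apply collection in \ct{E} to the cover $E' \to B \times T$ over $A \times T$. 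Carrying the tracker $t$ along in the base is what guarantees that the square produced by collection in \ct{E} is actually tracked, hence a covering square in $\Asm_\ct{E}$; without some such device the argument does not close.

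There is also a smaller but real error in your handling of {\bf (A8)}: you assert that the realizer relation $\rho$ for $\forall_f$ is given by a bounded formula, but it is not, since it contains the clause $n_0 \in \alpha(a)$ and $\alpha \subseteq \NN \times A$ is an arbitrary, not necessarily small, relation. The correct move is to replace $\rho$ by the relation $\tau$ omitting that clause --- which \emph{is} bounded, using smallness of $\NN$, $\beta$ and $\sigma$ --- then check that $R = \{a : \exists n\, n\in\tau(a)\}$, so the inclusion \func{j}{R}{A} is display, and that the identity gives an isomorphism $(R,\rho)\cong(R,\tau[j])$. Finally, you never treat {\bf (A9)} (diagonals: decompose $\Delta$ as an isomorphism followed by the standard display map with the everywhere-realized relation $\mu$), and your glosses of {\bf (A4)} (it asserts $0\to 1$, $1\to 1$, $1+1\to 1$ are display, not that isomorphisms are) and {\bf (A10)} (it is the Images axiom, verified by noting that $\iota(a)=\bigcup_{b\in f^{-1}(a)}\beta(b)$ is defined by a bounded formula since $f$ and $\beta$ are small) misstate what those axioms say, even though the verifications are routine.
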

\begin{proof}
{\bf (A1)} We have proved pullback stability in the corollary above.

{\bf (A3)} It is easy to see that the sum of two standard display maps can be chosen to be a standard display map again. From this {\bf (A3)} follows.

{\bf (A4)} It is also easy to see that the maps $0 \rTo 1, 1 \rTo 1$ and $1+1 \rTo 1$ are standard display maps.

{\bf (A5)} Closure of display maps under composition we showed in the corollary above.

{\bf (A7)} We postpone the proof of the fact that the display maps satisfy the collection axiom: one will be given in a lemma below.

{\bf (A8)} We start with a diagram of the form
\diag{ (S, \sigma[i]) \ar@{ >->}[d]_i & & \\
(B, \beta[f]) \ar[rr]_f & & (A, \alpha), }
in which both maps are standard display maps (this is sufficient to establish the general case). We compute $(R, \rho) = \forall_f(S, \sigma)$: first we put $R_0 = \forall_f S \subseteq A$, and let $\rho \subseteq \NN \times R_0$ be defined by
\begin{eqnarray*}
 n \in \rho(a) & \Leftrightarrow & n_0 \in \alpha(a) \mbox{ and } \\
& & \forall b \in f^{-1}(a), m \in \beta[f](b) \, ( \, n_1(m) \downarrow \mbox{ and } n_1(m) \in \sigma[i](b) \, ).
\end{eqnarray*}
Furthermore, we set
\[ R = \{ a \in R_0 \, : \, \exists n \, n \in \rho(a) \} \]
and denote by $j$ the inclusion $R \subseteq A$. By restricting $\rho$, the subobject $(R, \rho)$ will be the result of the universal quantifying $(S, \sigma)$ along $f$. In the particular case we are in, this can be done differently.

We define $\tau \subseteq \NN \times R_0$ by
\[ n \in \tau(a) \Leftrightarrow \forall a \in f^{-1}(a), m \in \beta(b) \, ( \, n_1(m) \downarrow \mbox{ and } n_1(m) \in \sigma(b) \, ). \]
Note that we have a bounded formula on the right (using that $\NN$ is small). Now one can show that
\[ R = \{ a \in R_0 \, : \, \exists n \, n \in \tau(a) \}, \]
from which it follows that $j$ is a display map (again using that $\NN$ is small). Furthermore, one can prove that the identity is an isomorphism of assemblies
\[ (R, \rho) \cong (R, \tau[j]), \]
from which it follows that $(R, \rho) \to (A, \alpha)$ is a display map.

{\bf (A9)} The product of an assembly $(X, \chi)$ with itself can be computed by taking $(X \times X, \chi \times \chi)$, where
\[ n \in (\chi \times \chi)(x, y) \Leftrightarrow n_0 \in \chi(x) \mbox{ and } n_1 \in \chi(y). \]

This means that by writing \func{\Delta}{X}{X \times X} for the diagonal map in \ct{E}, the diagonal map in assemblies can be decomposed as follows
\diag{ (X, \chi) \ar[r]^(.44)\cong & (X, \mu[\Delta]) \ar[r]^(.42)\Delta & (X, \chi) \times (X, \chi),}
where $\mu \subseteq \NN \times X$ is the relation defined by
\[ n \in \mu(x) \Leftrightarrow \mbox{Always.} \]

{\bf (A10)} We need to show that for a display map $f$, if $f = me$ with $m$ a mono and $e$ a cover, then also $m$ is display. Without loss of generality, we may assume that $f$ is a standard display map \func{f}{(B, \beta[f])}{(A, \alpha)}. From \refprop{asmisHeyting}, we know that we can compute its image $(I, \iota)$ by putting $I = {\rm Im}(f)$ and
\[ n \in \iota(a) \Leftrightarrow \exists b \in f^{-1}(a) \,  n \in \beta(b). \]
As the formula on the right is bounded, the map \func{m}{(I, \iota)}{(A, \alpha)} can be decomposed as an isomorphism followed by a standard display map:
\diag{ (I, \iota) \ar[r]^\cong & (I, \iota[m]) \ar[r]^m & (A, \alpha).}

{\bf (NE)} and {\bf (NS)} The nno in assemblies is the pair consisting of $\NN$ together with the diagonal $\Delta \subseteq \NN \times \NN$.
\end{proof}

We will use the proof that the display maps in assemblies satisfy collection to illustrate a technique that does not really save an enormous amount of labour in this particular case, but will be very useful in more complicated situations.

\begin{defi}{partasm} An assembly $(A, \alpha)$ will be called \emph{partitioned}, if
\[ n \in \alpha(a), m \in \alpha(a) \Rightarrow n = m. \]
From this it follows that $\alpha$ can be considered as a morphism $A \to \NN$.
\end{defi}

\begin{lemm}{propofpartasm}
\begin{enumerate}
\item Every assembly is covered by a partitioned assembly. Hence every morphism between assemblies is covered by a morphism between partitioned assemblies.
\item A morphism \func{f}{(B, \beta)}{(A, \alpha)} between partitioned assemblies is display iff $f$ is small in \ct{E}.
\item Every display map between assemblies is covered by a display map between partitioned assemblies.
\end{enumerate}
\end{lemm}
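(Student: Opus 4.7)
For part (1), given $(A, \alpha)$, I would take as partitioned cover the realizer relation itself, $\tilde A = \alpha \subseteq \NN \times A$, with partition function the first projection to $\NN$. The second projection $\tilde A \to A$ is then tracked by the identity, since $(n, a) \in \tilde A$ means precisely $n \in \alpha(a)$, and by the same token is a cover. To extend this to a morphism $f \colon (B, \beta) \to (A, \alpha)$ tracked by some $r$, I would form $\tilde A = \alpha$ and $\tilde B = \beta$ in this way and define $\tilde f(m, b) = (r(m), fb)$; this map is tracked by $r$, and the resulting square commutes.

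For part (2), I would first observe that in a partitioned assembly the projection $\beta \to B$ is an isomorphism, making the mono $\beta \subseteq \NN \times B$ automatically small, and that $(B, \beta[f])$ is again partitioned since $\beta[f](b)$ is the singleton $\{\langle \beta(b), \alpha(fb)\rangle\}$. Assuming $f \colon B \to A$ is small in $\ct{E}$, the identity $(B, \beta) \to (B, \beta[f])$ would then be exhibited as an isomorphism of assemblies, tracked in one direction by $n \mapsto \langle n, r(n)\rangle$ (with $r$ a tracker of $f$) and in the other by first projection; so $f$ is isomorphic over $(A, \alpha)$ to the standard display map $f \colon (B, \beta[f]) \to (A, \alpha)$. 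Conversely, if $f$ is display then by definition $(B, \beta) \cong (B', \beta'[f'])$ over $(A, \alpha)$ with $f' \colon B' \to A$ small in $\ct{E}$, and the induced isomorphism $B \cong B'$ in $\ct{E}$ identifies $f$ with $f'$.

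For part (3), I would reduce to the case of a standard display map $f \colon (B, \beta[f]) \to (A, \alpha)$ with $f$ and $\beta \to B$ small in $\ct{E}$, by pre-composing with the defining isomorphism. I would then take $\tilde A = \alpha$ as in (1) and set $\tilde B = \beta[f] \subseteq \NN \times B$ with first-projection partition, mapping to $\tilde A$ by $(k, b) \mapsto (k_1, fb)$. The cover conditions on the vertical maps and commutativity of the evident square would be checked directly. The main technical point, and the step I expect to be the crux, is the identification of $\tilde B$ with the pullback $\beta \times_A \alpha$ in $\ct{E}$, where $\beta \to A$ denotes the composite $\beta \to B \to A$; under this identification $\tilde f$ is exactly the pullback of $\beta \to A$ along $\alpha \to A$. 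Since $\beta \to A$ is a composite of two small maps in $\ct{E}$, it is small, hence so is $\tilde f$ by pullback stability. Part (2) then yields that $\tilde f$ is a display map between partitioned assemblies. Spotting this pullback identification is the only real work; without it, smallness of $\tilde f$ would need to be established by direct diagram chasing from the axioms of $\smallmap{S}$.
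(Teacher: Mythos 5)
Parts (2) and (3) of your proposal are correct and essentially identical to the paper's own argument; in particular, the identification of $\beta[f]$ with the pullback $\beta\times_A\alpha$ of the small composite $\beta\to B\to A$ along the cover $\alpha\to A$ is exactly what makes the paper's one-line justification of (3) work, so you have located the crux correctly there.

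There is, however, a genuine gap in the morphism half of part (1). ``Covered by'' is the technical notion recalled in Appendix B: the square must be a \emph{covering square}, i.e.\ a quasi-pullback whose bottom edge is a cover, not merely a commuting square whose horizontal edges happen to be covers. With your choice $\tilde B=\beta$ and $\tilde f(m,b)=(r(m),fb)$, the canonical comparison map $\beta\to\alpha\times_A B$ sends $(m,b)$ to $(r(m),b)$, and for this to be a cover you would need: for every $b$ and every $n\in\alpha(fb)$ there is some $m\in\beta(b)$ with $r(m)=n$. This fails in general; for instance take $A=B=1$, $\alpha(*)=\NN$, $\beta(*)=\{0\}$, $f=\mathrm{id}$: then $\beta\cong 1$, $\alpha\times_A B\cong\NN$, and the comparison map picks out the single point $r(0)$, so it is not even epi on underlying objects. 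Since the quasi-pullback property is precisely what the later reductions (collection, representability, and part (3) itself) use when they replace a map by a covering map between partitioned assemblies, this cannot be waved away. The repair is the standard one: first pull $f$ back along the cover $\alpha\to A$, then apply the object case of (1) to the resulting assembly $\alpha\times_A B$. Concretely, take $\tilde B$ to be (a reshuffling of) $\beta[f]$, i.e.\ the partitioned assembly with underlying set $\{(\langle m,n\rangle,b)\,:\,m\in\beta(b),\ n\in\alpha(fb)\}$ and $\tilde f(\langle m,n\rangle,b)=(n,fb)$ --- which is exactly the square you, and the paper, already use in part (3).
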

The definitions of the notions of a covering square and the covering relation between maps from \cite{bergmoerdijk07b} are recalled in Appendix B.

\begin{proof}
(1) If $(A, \alpha)$ is an assembly, $\alpha$ can be considered as a partitioned assembly with $n$ realizing an element $(m, a) \in \alpha$ iff $n = m$. This partitioned assembly covers $(A, \alpha)$.

(2) By definition it is the case that every display map between partitioned assemblies has an underlying map which is small. Conversely, if $(B, \beta)$ is a partitioned assembly, the set $\beta(b)$ is a singleton, and therefore small. So the decomposition
\diag{ (B, \beta) \ar[r]^\cong & (B, \beta[f]) \ar[r]^f & (A, \alpha). }
shows that $f$ is a display map, if the underlying morphism is small.

(3) If \func{f}{(B, \beta[f])}{(A, \alpha)} is a standard display map between assemblies, then
\diag{ \beta[f] \ar[r] \ar[d]_f & (B, \beta[f]) \ar[d]^f \\
\alpha \ar[r] & (A, \alpha) }
is a covering square with a display map between partitioned assemblies on the left.
\end{proof}

\begin{lemm}{collfordispmaps}
The class of display maps in the category $\Asm_\ct{E}$ of assemblies satisfies the collection axiom {\bf (A7)}.
\end{lemm}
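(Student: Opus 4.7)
The plan is to exploit Lemma \reflemm{propofpartasm} to reduce the problem to the case of display maps between partitioned assemblies, in which setting {\bf (A7)} becomes a direct consequence of the corresponding axiom already available in $(\ct{E}, \smallmap{S})$.

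Starting from a display map \func{f}{(B,\beta)}{(A,\alpha)} together with a cover \func{p}{(C,\gamma)}{(B,\beta)}, the first step is to apply part (3) of the preceding lemma to cover $f$ by a standard display map $f' : (B',\beta') \to (A',\alpha')$ between partitioned assemblies, then to pull $p$ back along the top edge of this covering square, and finally to use part (1) of the lemma once more to cover the resulting pullback by a partitioned assembly. This yields a cover \func{q}{(C',\gamma')}{(B',\beta')} between partitioned assemblies which, pasted vertically with the reduction square and the pullback square, sits over $f \circ p$. Since vertical pastes of covering squares are again covering, it therefore suffices to establish {\bf (A7)} for the pair $(f', q)$.

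In this partitioned setting, part (2) of the lemma identifies $f'$ with a small map $f' : B' \to A'$ in $\ct{E}$. The crucial observation is that, because $\beta'$ is single-valued, any tracker for $q$ witnesses that the underlying map $q : C' \to B'$ is already a cover in $\ct{E}$: for every $b \in B'$ the unique realizer $\beta'(b)$ is sent by the tracker to a realizer of some $c \in C'$ with $q(c) = b$. We may therefore invoke {\bf (A7)} in $(\ct{E}, \smallmap{S})$ to obtain a covering square
\diag{ D \ar[r] \ar[d]_g & C' \ar[d]^q \\ E \ar[r] & A' }
in $\ct{E}$ with $g$ small and $E \to A'$ a cover. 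I would then lift this square back to $\Asm_\ct{E}$ by endowing $D$ and $E$ with the partitioned structures pulled back from $\gamma'$ and $\alpha'$ along the horizontal maps; with these choices the identity tracks all four arrows, $g$ is a display map between partitioned assemblies by part (2), and the lifted square remains a covering square.

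The main obstacle is the bookkeeping in the reduction step: one must verify that the three stacked covering squares (the reduction square, the pullback of $p$, and the square produced in the partitioned case) paste together into a covering square of precisely the form demanded by {\bf (A7)}. Once this is sorted out, the essential content of the argument is the observation that for partitioned assemblies the realizability-level notion of cover coincides with the $\ct{E}$-level notion, which is exactly what allows the collection axiom in $\ct{E}$ to be transported to $\Asm_\ct{E}$ essentially verbatim.
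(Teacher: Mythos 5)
Your reduction to the case where both the display map $f'$ and the cover $q$ run between partitioned assemblies is legitimate, and so is the observation that for partitioned assemblies the underlying map of a cover of assemblies is a cover in \ct{E}. The gap is in the final lifting step. After applying {\bf (A7)} in \ct{E} to the underlying maps you obtain a covering square in \ct{E}, but when you equip $D$ and $E$ with the realizers pulled back along \func{h}{D}{C'} and \func{k}{E}{A'}, the resulting square need not be a covering square of \emph{assemblies}: the quasi-pullback condition in $\Asm_\ct{E}$ demands a single recursive index which, from a realizer $\langle \alpha'(ke),\beta'(b)\rangle$ of a point of $E\times_{A'}B'$, computes the realizer $\gamma'(hd)$ of \emph{some} $d$ in the fibre over it. Collection in \ct{E} is blind to realizers and may select preimages $hd\in q^{-1}(b)$ whose (unique) realizers are not computable from $\beta'(b)$; the hypothesis that $q$ is a cover of assemblies only yields a tracker $s$ finding \emph{some} element of each fibre with $s(\beta'(b))=\gamma'(c)$, and nothing forces the elements chosen by the collection square to be among these. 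Concretely, over $\Sets$ take $f'=\id$ on $(\NN,\id)$, let $C'=\NN\times\NN$ with $\gamma'(n,m)=\langle n,m\rangle$ and $q=\pi_1$; axiom {\bf (A7)} in $\Sets$ may return $D=\{(n,h(n)) : n\in\NN\}$ for a non-recursive $h$, and then $(D,\delta)\to (E,\epsilon)\times_{(A',\alpha')}(B',\beta')$ is not a cover of assemblies. (A smaller slip: the identity does not track \func{g}{(D,\delta)}{(E,\epsilon)}; it is tracked by composing trackers of $q$ and $f'$, which is harmless.)

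This is precisely the point the paper's argument is engineered around. There one does not apply collection in \ct{E} to $q$ itself, but to the cover \func{p}{E'}{B\times T}, where $T$ is the object of all numbers $t$ tracking the cover and $E'$ consists of triples $(e,b,t)$ with $q(e)=b$ and $t(\beta b)\in\eta(e)$; collection is applied over $A\times T$, and the realizer of a point of $D$ is declared to be that of its image in $B\times T$, hence carries the code $t$. Only because of this is the map $D\to E$ tracked (by evaluating $t$ at $\beta b$) and the lifted square covering. To repair your proof you would need to build this certification of the chosen preimages into the object to which you apply {\bf (A7)}; as written, the transport of the collection square from \ct{E} to $\Asm_\ct{E}$ ``essentially verbatim'' does not go through.
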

\begin{proof}
In view of the lemma above, the general case follows by considering a display map \func{f}{(B, \beta)}{(A, \alpha)} between partitioned assemblies and a cover \func{q}{(E, \eta)}{(B, \beta)}. The fact that $q$ is a cover means that there exists a natural number $t$ such that
\begin{equation}\label{propoft}
\begin{array}{l}
\mbox{``For all $b \in B$, the expression $t(\beta b)$ is defined, and} \\ \mbox{there exists an $e \in E$ with $q(e) = b$ and $t(\beta b) \in \eta(e)$.''}
\end{array}
\end{equation}
We will collect all those natural numbers in an object
\[ T = \{ t \, : \, t \mbox{ is a natural number with the property defined above} \}, \]
which can be turned into a partitioned assembly by putting $\theta(t) = t$.
From (\ref{propoft}) it follows that $T$ is an inhabited set, and that for
\[ E' = \{ (e, b, t) \, : \, q(e) = b, t(\beta b)\downarrow, t(\beta b) \in \eta(e) \}, \]
the projection \func{p}{E'}{B \times T} will be a cover. So we can apply collection in \ct{E} to obtain a covering square
\diag{ D \ar[d]_g \ar[r]^{h} & E' \ar@{->>}[r]^(.4){p} & B \times T \ar[d]^{f \times T} \\
C \ar@{->>}[rr]_{k} & & A \times T,}
where $g$ is a small map. It is not so hard to see that from this diagram in \ct{E}, we obtain two covering squares in the category of assemblies
\diag{ (D, \delta )  \ar[d]_g \ar@{->>}[r]^(.36){ph} & (B \times T, \beta \times \tau) \ar@{->>}[r] \ar[d]_{f \times T} & (B, \beta) \ar[d]^f \\
(C, \gamma ) \ar@{->>}[r]_(.36)k & (A \times T, \alpha \times \tau) \ar@{->>}[r] & (A, \alpha), }
where we have set
\begin{eqnarray*}
\gamma(c) & = & (\alpha \times \tau)(kc) \mbox{ and } \\
\delta(d) & = & (\beta \times \tau)(phd).
\end{eqnarray*}
Since $g$ is a display map between partitioned assemblies, we only need to verify that the map $(D, \delta) \to (B, \beta)$ along the top of the above diagram factors as
\diag{ (D, \delta ) \ar[r]^l & (E, \eta) \ar@{->>}[r]^q & (B, \beta). }
We set $l = \pi_1 h$, because one can show that this morphism is tracked, as follows. If $h(d) = (e, t, b)$ for some $d \in D$, then the realizer of $d$ consists of a code $n$ for the partial recursive function $t$, together with the realizer $\beta b$ of $b$. By definition of $E'$, the expression $n(\beta b)$ is defined and a realizer for $e = (\pi_1 h)(d) = l(d)$.
\end{proof}

\section{The predicative realizability category}

Let us recall from \cite{carboni95} the construction of the (ordinary) exact completion $\ct{F}_{ex/reg}$ of a Heyting category \ct{F}. Objects of $\ct{F}_{ex/reg}$ are the equivalence relations in \ct{F}, which we will denote by $X/R$ when $R \subseteq X \times X$ is an equivalence relation. Morphisms from $X/R$ to $Y/S$ are \emph{functional relations}, i.e., subobjects $F \subseteq X \times Y$ satisfying the following statements in the internal logic of \ct{F}:
\begin{displaymath}
\begin{array}{l}
\forall x \, \exists y \, F(x, y), \\
xRx' \land ySy' \land F(x,y) \rightarrow F(x', y'), \\
F(x, y) \land F(x, y') \rightarrow ySy'.
\end{array}
\end{displaymath}
There is a functor \func{{\bf y}}{\ct{F}}{\ct{F}_{\rm ex/reg}} sending an object $X$ to $X/\Delta_X$, where $\Delta_X$ is the diagonal $X \to X \times X$. This functor is a full embedding preserving the structure of a Heyting category. When \smallmap{T} is a class of display maps in \ct{F}, one can identify the following class of maps in $\ct{F}_{\rm ex/reg}$:
\begin{eqnarray*}
g \in \overline{\smallmap{T}} & \Leftrightarrow & g \mbox{ is covered by a morphism of the form } {\bf y}f \mbox{ with } f \in \smallmap{T}.
\end{eqnarray*}
We refer to the pair $(\overline{\ct{F}}, \overline{\smallmap{T}})$, consisting of the full subcategory $\overline{\ct{F}}$ of $\ct{F}_{ex/reg}$ containing those equivalence relations \func{i}{R}{X \times X} for which $i$ belongs to $\overline{\smallmap{T}}$, together with $\overline{\smallmap{T}}$, as the \emph{exact completion} of the pair $(\ct{F}, \smallmap{T})$. In \cite{bergmoerdijk07b} we proved the following result for such exact completions:
\begin{theo}{excomplpred} {\rm \cite{bergmoerdijk07b}}
If $(\ct{F},\smallmap{T})$ is a category with a representable class of display maps satisfying {\bf ($\Pi$E), (WE)} and {\bf(NS)}, then its exact completion $(\overline{\ct{F}}, \overline{\smallmap{T}})$ is a predicative category with small maps.
\end{theo}
In the rest of the section, we let $(\ct{E}, \smallmap{S})$ be a predicative category with small maps. For such a category we have constructed and studied the pair $(\Asm_\ct{E}, \smallmap{D}_\ct{E})$ consisting of the category of assemblies and its display maps. We now define $(\Eff_\ct{E}, \smallmap{S}_\ct{E})$ as the exact completion of $(\Asm_\ct{E}, \smallmap{D}_\ct{E})$ and prove our main theorem (\reftheo{effascatwsmallmaps}) as an application of \reftheo{excomplpred}. Much of the work has already been done in Section 2. In fact, \refprop{axiomsfordispinasm} shows that the only thing that remains to be shown are the representability and the validity of axioms {\bf ($\Pi$E)} and {\bf (WE)} (see Appendix B).
\begin{prop}{reprfordispl}
The class of display maps in the category $\Asm_\ct{E}$ of assemblies is representable.
\end{prop}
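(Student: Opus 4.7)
The strategy is to exhibit a universal standard display map; since every display map is isomorphic to a standard one (by \refdefi{displayinAsm}), it will then follow that every display map arises as a pullback of this universal one. A standard display map over a base assembly $(A, \alpha)$ is specified entirely by two pieces of data internal to $\ct{E}$: a small map $f: B \to A$ and a small subobject $\beta \subseteq \NN \times B$, equivalently a classifying map $B \to \spower(\NN)$. Since $(\ct{E}, \smallmap{S})$ is a predicative category with small maps, the class $\smallmap{S}$ is representable by a universal small map $\pi: E \to U$ and the power-class object $\spower(\NN)$ exists, so these two ingredients ought to combine into a single classifier living in $\ct{E}$.

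\textbf{Construction.} Using exponentiability of small maps, form in $\ct{E}$ the object
\[ U^* \;=\; \Pi_\pi\, \pi^*(\spower(\NN)), \]
whose defining universal property is that morphisms $A \to U^*$ in $\ct{E}$ correspond bijectively to pairs $(\tilde f, \tilde\beta)$ with $\tilde f: A \to U$ and $\tilde\beta: A \times_U E \to \spower(\NN)$. Equip $U^*$ with the ``codiscrete'' assembly structure $\upsilon^*$ in which every $n \in \NN$ realizes every element of $U^*$; then \emph{any} morphism $A \to U^*$ in $\ct{E}$ automatically lifts, with no tracking condition to verify, to a morphism $(A, \alpha) \to (U^*, \upsilon^*)$ of assemblies for every choice of $\alpha$. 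Let $f^*: E^* \to U^*$ be the pullback of $\pi$ along the projection $U^* \to U$; this is small in $\ct{E}$. Let $\beta^*: E^* \to \spower(\NN)$ be the canonical evaluation map associated to $U^*$. Then $\rho: (E^*, \beta^*[f^*]) \to (U^*, \upsilon^*)$ is a standard display map.

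\textbf{Verification of universality.} Given any standard display map $g: (B, \beta[f]) \to (A, \alpha)$, the classifying data of the small map $f$ (a morphism $\tilde f: A \to U$ with $B \cong A \times_U E$) and of the small subobject $\beta$ (a morphism $\tilde\beta: B \to \spower(\NN)$) assemble into a single morphism $A \to U^*$ in $\ct{E}$, which is automatically an assembly morphism $(A, \alpha) \to (U^*, \upsilon^*)$. Applying the explicit formula from \reflemm{somepropstdispmaps}(1), one computes that the pullback of $\rho$ along this morphism is, under the canonical identification $A \times_{U^*} E^* \cong A \times_U E = B$, precisely $g$, because $\beta^*$ restricts along this identification to $\beta$. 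An arbitrary display map is reduced to the standard case by factoring out the preliminary isomorphism in its definition.

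\textbf{Main obstacle.} The principal technical point is the very existence of $U^* = \Pi_\pi\, \pi^*(\spower(\NN))$: we are taking a dependent product $\Pi_\pi$ along a small map $\pi$ of an object whose total space is \emph{not} small, since $\spower(\NN)$ is a class. That this is allowed is exactly the content of exponentiability of small maps in a predicative category with small maps, but it is worth pointing out explicitly since the corresponding exponential of a class by a class would not be available. Once $U^*$ is in hand, the remainder of the argument is a matter of unwinding the description of realizer relations in pullbacks given by \reflemm{somepropstdispmaps}(1).
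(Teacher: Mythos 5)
There is a genuine gap, and it sits at the heart of your ``Verification of universality'': you assume that the small map \func{f}{B}{A} is classified by the representation \func{\pi}{E}{U} in the strict sense that there is a map $\tilde f\colon A \to U$ with $B \cong A \times_U E$. Representability in this framework (see Appendix B) asserts something strictly weaker: every small map is only \emph{locally in its base} a \emph{quotient} of a pullback of $\pi$, i.e.\ it fits into a diagram whose left-hand square is merely covering, over a cover $M \twoheadrightarrow A$. There is in general no global classifying map $A \to U$, so the pair $(\tilde f, \tilde\beta)$ you need to produce the map $A \to U^* = \Pi_\pi\pi^*(\spower(\NN))$ does not exist, and no standard display map need be a literal pullback of your $\rho$. (The statement being proved only asks for the weak, ``local quotient of a pullback'' property, so aiming at a strict classifier is both unattainable from the axioms and unnecessary.) A secondary problem: $(E^*, \beta^*[f^*])$ need not be an assembly, since for a point $(u,\phi,e)$ with $\phi\colon E_u \to \spower(\NN)$ the realizer set $\phi(e)$ may be empty; one would have to cut $U^*$ down to the $\phi$ with inhabited values.

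The repair is essentially the paper's proof. One takes $T = \{(u, \func{p}{E_u}{\NN})\}$ --- in effect $\Pi_\pi\pi^*(\NN)$ rather than $\Pi_\pi\pi^*(\spower(\NN))$, so that everything is a \emph{partitioned} assembly and the inhabitation issue disappears --- with the uniform realizability structure on $T$ and $\delta(u,p,e)=pe$ on the total space. Then, given a display map between partitioned assemblies (to which one reduces by \reflemm{propofpartasm}), one invokes weak representability of $\smallmap{S}$ in $\ct{E}$ to get a cover of the base and a covering square, lifts this diagram to assemblies, and --- this is where the remaining real work lies, which your argument does not engage with --- verifies that the comparison maps are \emph{tracked} and that the left-hand square remains a quasi-pullback of assemblies. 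This requires a careful choice of the realizability relation on the middle objects (in the paper, $\nu(n) = \langle \mu sn, \delta k'n\rangle$), which is exactly the step your codiscrete structure on the base cannot supply for the total spaces.
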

\begin{proof}
Let \func{\pi}{E}{U} be the representation for the small maps in \ct{E}. We define two partitioned assemblies $(T, \tau)$ and $(D, \delta)$ by
\begin{eqnarray*}
T & = & \{ (u \in U, \func{p}{E_u}{\NN}) \}, \\
\tau(u,p) & = & 0, \\
D & = & \{ (u \in U, \func{p}{E_u}{\NN}, e \in E_u ) \}, \\
\delta(u, p, e) & = & pe.
\end{eqnarray*}
Clearly, the projection \func{\rho}{(D, \delta)}{(T, \tau)} is a display map, which we will now show is a representation.

Assume \func{f}{(B, \beta)}{(A, \alpha)} is a display map between partitioned assemblies (in view of \reflemm{propofpartasm} it is sufficient to consider this case). Since $f$ is also a display map in \ct{E} we find a diagram of the form
\diag{B \ar[d]_f & N \ar[d]^s \ar[r]^k \ar@{->>}[l]_l & E \ar[d]^{\pi} \\
A &  M \ar[r]_g \ar@{->>}[l]^h & U, }
where the left square is covering and the right one a pullback. This induces a similar picture
\diag{(B, \beta) \ar[d]_f & (N, \nu) \ar[d]^s \ar[r]^{k'} \ar@{->>}[l]_l & (D, \delta) \ar[d]^{\rho} \\
(A, \alpha) &  (M, \mu) \ar[r]_{g'} \ar@{->>}[l]^h & (T, \tau) }
in the category of assemblies, where we have set:
\begin{eqnarray*}
g'(m) & = & (gm, \func{\beta l k^{-1}}{E_{gm}}{\NN}), \\
\mu(m) & = & \alpha h(m) \mbox{, so } h \mbox{ is tracked and a cover}, \\
k'(n) & = & (g's(n), kn), \\
\nu(n) & = & \langle \mu sn, \delta k' n \rangle \mbox{, so the righthand square is a pullback.}
\end{eqnarray*}
Here $g'$ is well-defined, because $N$ is a pullback and therefore the map $k$ induces for every $m \in M$ an isomorphism
\diag{ N_m \ar[r]^(.46)k_(.46)\cong & E_{gm}.}
It remains to see that $l$ is tracked, and the that left hand square is a quasi-pullback. For this, one unwinds the definition of $\nu$:
\begin{eqnarray*}
\nu(n) & = & \langle \mu sn, \delta k' n \rangle \\
& = & \langle \mu sn, \delta(g's(n), kn) \rangle \\
& = & \langle \mu sn, \delta(gs(n), \beta l k^{-1}, kn) \rangle \\
& = & \langle \mu sn, \beta l k^{-1} kn \rangle \\
& = & \langle \mu sn, \beta l n \rangle.
\end{eqnarray*}
From this description of $\nu$, we see that $l$ is indeed tracked (by the projection on the second coordinate). To see that the square is a quasi-pullback, one uses first of all that it is a quasi-pullback in \ct{E}, and secondly that the realizers for an element in $\NN$ are the same as that of its image in the pullback $(M \times_A B, \mu \times \beta)$ along the canonical map to this object.
\end{proof}

\begin{prop}{functypesinasm}
The display maps in the category $\Asm_\ct{E}$ of assemblies are exponentiable, i.e., satisfy the axiom {\bf ($\Pi$E)}. Moreover, if {\bf ($\Pi$S)} holds in \ct{E}, then the it holds for the display maps in $\Asm_\ct{E}$ as well.
\end{prop}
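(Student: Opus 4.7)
The plan is to construct the dependent product along a display map explicitly, and then to observe that the construction is small whenever $(\Pi S)$ holds in $\ct{E}$. By the decomposition of an arbitrary display map into an isomorphism followed by a standard display map, it suffices to produce $\Pi_f h$ when $f : (B, \beta[f]) \to (A, \alpha)$ is standard display (so both $\beta$ and $f : B \to A$ are small in $\ct{E}$) and $h : (Y, \upsilon) \to (B, \beta[f])$ is an arbitrary morphism of assemblies.

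For the underlying object, I would use that $(\ct{E}, \smallmap{S})$ satisfies $(\Pi E)$, so the dependent product $\pi_0 : \Pi_f Y \to A$ exists in $\ct{E}$; its generalised elements are pairs $(a, s)$ with $s$ a section of $h$ over $f^{-1}(a)$. Define
\[ n \in \zeta(a, s) \Leftrightarrow \forall b \in f^{-1}(a) \, \forall m \in \beta(b) \, \big( n(m) \downarrow \wedge n(m) \in \upsilon(s(b)) \big), \]
and let $P \subseteq \Pi_f Y$ be the subobject of those $(a, s)$ admitting a $\zeta$-realizer, with $\pi : P \to A$ the restriction of $\pi_0$. I claim that $(P, \zeta[\pi]) \to (A, \alpha)$ is the desired $\Pi_f h$. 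To verify the universal property, I would use the bijection between maps $t : f^*(q) \to h$ over $(B, \beta[f])$ and maps $\hat t : q \to \pi$ over $(A, \alpha)$ (for arbitrary $q : (X, \xi) \to (A, \alpha)$) given by $\hat t(x) = (q(x), \lambda b.\, t(b, x))$. Using the explicit description of the pullback from \reflemm{somepropstdispmaps}, a tracker $e$ for $t$ accepts inputs $\langle m, n\rangle$ with $m \in \beta(b)$ and $n \in \xi(x)$; by the $s$-$m$-$n$ principle available in the internal logic of $\ct{E}$, there is a total recursive $\sigma$ such that $\sigma(n)$ is the code of $m \mapsto e(\langle m, n\rangle)$, and then $\hat e(n) := \langle \sigma(n), r_q(n)\rangle$ (with $r_q$ a tracker for $q$) tracks $\hat t$ and witnesses that $\hat t(x)$ lies in $P$. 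The inverse transposition and naturality in $q$ are symmetric and routine.

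For $(\Pi S)$: if $h$ is itself a standard display map, then $\upsilon = \upsilon_0[h]$ for some small $\upsilon_0 \subseteq \NN \times Y$ and small $h : Y \to B$. Replacing $\upsilon$ by $\upsilon_0$ in the definition of $\zeta$ yields a small relation $\tilde\zeta$, and I would show that $(P, \zeta[\pi])$ and $(P, \tilde\zeta[\pi])$ are isomorphic over $(A, \alpha)$: both directions of the identity on $P$ are tracked by using the $\alpha(a)$-realizer already bundled into the outer bracket $[\pi]$ together with the fact that $m \in \beta(b)$ is available as the input. With this reformulation the construction uses only small data: $\Pi_f Y \to A$ is small by $(\Pi S)$ in $\ct{E}$, the subobject $P$ is cut out by a bounded formula, and the fibers of $\tilde\zeta \to P$ are bounded subobjects of $\NN$, so both $P \to A$ and $\tilde\zeta \to P$ are small. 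Hence $\Pi_f h$ is a standard display map. The main technical point is the isomorphism $(P, \zeta[\pi]) \cong (P, \tilde\zeta[\pi])$ over $(A, \alpha)$ together with the tracking bookkeeping in the universal property; the rest reduces to standard facts about dependent products and bounded separation in $(\ct{E}, \smallmap{S})$.
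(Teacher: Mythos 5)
Your construction is correct and is essentially the paper's proof: the paper forms the exponential $g^f$ in the slice over $(A,\alpha)$ with the realizability relation $n_0\in\alpha(a)$ and $n_1(m)\in\gamma(\phi b)$ for $m\in\beta(b)$, which is the same assembly as your $(P,\zeta[\pi])$ up to swapping the pairing, and for {\bf ($\Pi$S)} it likewise strips the bracketed part of the fibrewise relation, checks the underlying objects agree, and observes the identity is a tracked isomorphism so that everything is cut out by bounded formulas from small data. The only differences are presentational: you phrase it as $\Pi_f h$ for arbitrary $h$ over $B$ rather than as a slice exponential, and you spell out the $s$-$m$-$n$ tracking of the transposition, which the paper leaves implicit.
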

\begin{proof}
Let \func{f}{(B, \beta[f])}{(A, \alpha)} be a standard display map and \func{g}{(C, \gamma)}{(A, \alpha)} an arbitrary map with the same codomain. For showing the validity of {\bf ($\Pi$E)} it suffices to prove that the exponential $g^f$ exists in the slice over $(A, \alpha)$.

Since $f$ is small, one can form the exponential $g^f$ in $\ct{E}/A$, whose typical elements are pairs $(a \in A, \func{\phi}{B_a}{C_a})$. If we set
\begin{eqnarray*}
n \in \eta(a, \phi) & \Leftrightarrow & n_0 \in \alpha(a) \mbox{ and } (\forall b \in B_a, m \in \beta(b)) \, [n_1(m) \downarrow \mbox{ and } n_1(m) \in \gamma(\phi b)], \\
E & = & \{ (a, \phi) \in f^g \, : \, (\exists n \in \NN) \, [n \in \eta(a, \phi)] \},
\end{eqnarray*}
the assembly $(E, \eta)$ with the obvious projection $p$ to $(A, \alpha)$ is the exponential $g^f$ in assemblies. This shows validity of {\bf ($\Pi$E)} for the display maps in assemblies.

If \func{g}{(C, \hat{\gamma}[g])}{(A, \alpha)} is another standard display map, the exponential can also be constructed by putting
\begin{eqnarray*}
n \in \hat{\eta}(a, \phi) & \Leftrightarrow & (\forall b \in B_a, m \in \beta(b)) \, [n_1(m) \downarrow \mbox{ and } n_1(m) \in \hat{\gamma}(\phi b)], \\
\hat{E} & = & \{ (a, \phi) \in f^g \, : \, (\exists n \in \NN) \, [n \in \hat{\eta}(a, \phi)] \}.
\end{eqnarray*}
It is not hard to see that $\hat{E} = E$, and the identity induces an isomorphism of assemblies $(\hat{E}, \hat{\eta}[p]) = (E, \eta)$. This shows the stability of {\bf ($\Pi$S)}.
\end{proof}

\begin{prop}{indtypesinasm}
The display maps in the category $\Asm_\ct{E}$ of assemblies satisfy the axiom {\bf (WE)}. Moroever, if {\bf (WS)} holds in \ct{E}, then it holds for the display maps in $\Asm_\ct{E}$ as well.
\end{prop}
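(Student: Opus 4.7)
The plan is to mimic the strategy used for \refprop{functypesinasm}: form the W-type in \ct{E} and equip it with a suitable realizability relation. By the reduction of \reflemm{propofpartasm}(3), it suffices to treat a standard display map $f: (B, \beta[f]) \to (A, \alpha)$ whose underlying morphism $f: B \to A$ is small in \ct{E}, and to form the W-type $W_f$ in \ct{E} using {\bf (WE)} there. I would then define a realizability relation $\omega \subseteq \NN \times W_f$ by W-recursion,
\[
n \in \omega({\rm sup}_a \phi) \Leftrightarrow n_0 \in \alpha(a) \mbox{ and } \forall b \in B_a, m \in \beta(b)\, (n_1(m)\downarrow \mbox{ and } n_1(m) \in \omega(\phi(b))),
\]
restrict to $W = \{t \in W_f : \exists n\, n \in \omega(t)\}$, and equip $(W, \omega)$ with the algebra structure induced by the constructor ${\rm sup}$ of $W_f$; it is tracked by the evident pairing combinator.

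For initiality, given any $P_f$-algebra $(X, \xi, \mu)$ in $\Asm_\ct{E}$ with $\mu$ tracked by $s$, W-recursion in \ct{E} produces the unique underlying algebra morphism $\bar\mu: W_f \to X$. I would then use Kleene's second recursion theorem to pick a code $r$ with $r(n) = s(\langle n_0, \Lambda m.\, r(n_1(m))\rangle)$, and verify by W-induction that $r$ tracks $\bar\mu$ on $W$; uniqueness of the algebra morphism in $\Asm_\ct{E}$ then descends from uniqueness in \ct{E}. This establishes {\bf (WE)}.

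For {\bf (WS)}, suppose {\bf (WS)} holds in \ct{E} and $f$ is a standard display map in $\Asm_\ct{E}$, so $\beta \subseteq \NN \times B$ is also small in \ct{E}. Then $W_f$ is small in \ct{E}. Following the device of \refprop{functypesinasm}, I would introduce
\[
n \in \hat\omega({\rm sup}_a \phi) \Leftrightarrow \forall b \in B_a, m \in \beta(b)\, (n(m)\downarrow \mbox{ and } n(m) \in \hat\omega(\phi(b))),
\]
whose defining clause is bounded, using smallness of $\NN$, $B_a$ and $\beta(b)$, so that $\hat\omega$ becomes a small subobject of $\NN \times W_f$. Writing ${\rm root}: W \to A$ for the root-label map, I would verify that the identity yields an isomorphism $(W, \omega) \cong (W, \hat\omega[{\rm root}])$ of assemblies, exhibiting the structure map on $W$ as a display map.

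The principal obstacle is the proof of {\bf (WS)}: one must confirm that the W-recursive $\hat\omega$ really determines a small subobject (as opposed to merely an internally definable class), and that the resulting isomorphism $(W, \omega) \cong (W, \hat\omega[{\rm root}])$ is tracked in both directions. These are the W-type analogues of the arguments in \refprop{functypesinasm}, but they require a careful invocation of the smallness of $\NN$ together with the preservation of smallness under W-recursion.
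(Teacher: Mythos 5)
Your overall strategy is the same as the paper's: form $W_f$ in $\ct{E}$, decorate the trees with realizers via the recursive clause, restrict to the decorated trees to get the W-type in $\Asm_\ct{E}$, and for {\bf (WS)} argue that the decoration relation is small. But there is a genuine gap at the very first step of the {\bf (WE)} argument: you propose to ``define a realizability relation $\omega \subseteq \NN \times W_f$ by W-recursion,'' and in a predicative category with small maps that recursion is not available as stated. W-recursion produces maps out of $W_f$ into an object equipped with an algebra structure; the natural target here would be the full power object of $\NN$, which does not exist predicatively, and $\spower\NN$ will not serve in the {\bf (WE)} case because the clause contains the conjunct $n_0 \in \alpha(a)$ with $\alpha$ an arbitrary (not necessarily small) relation, so the fibres $\omega(w) \subseteq \NN$ need not be small subobjects. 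The paper's proof spends essentially all of its effort on exactly this point: it introduces the notion of an \emph{attempt} --- a small deflationary subobject $\sigma \in \spower(\NN \times W_f)$ closed under the defining clause --- sets $n \in \delta(w)$ iff $(n,w)$ lies in some attempt, and then proves that this $\delta$ satisfies the recursion equation, the nontrivial direction requiring the collection axiom. Without this device (or an equivalent one) your $\omega$ is simply not constructed, and the rest of the {\bf (WE)} argument has nothing to stand on. Your initiality sketch via the second recursion theorem is fine (and more explicit than the paper), but it presupposes the missing object.

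For {\bf (WS)} you do flag the analogous worry (``preservation of smallness under W-recursion'') but leave it as an acknowledged obstacle rather than resolving it. Here the paper's resolution is exactly the one your instincts point toward and should be made explicit: when $\alpha$ and $\beta$ are small the clause is bounded, hence defines a $\spower_f$-algebra structure on $\spower\NN$, and initiality of $W_f$ in $\ct{E}$ yields a map $d \colon W_f \to \spower\NN$ with $n \in \delta(w)$ iff $n \in d(w)$, so $\delta$ is a small subobject of $\NN \times W_f$. Your cosmetic variant --- dropping the $\alpha$-conjunct and exhibiting $(W,\omega)$ as $(W,\hat\omega[\mathrm{root}])$ in the style of \refprop{functypesinasm} --- is harmless, but it does not by itself answer the question of why $\hat\omega$ exists as a (small) subobject in the first place.
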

\begin{proof}
Let \func{f}{(B, \beta[f])}{(A, \alpha)} be a standard display map. Since {\bf (WE)} holds in \ct{E}, we can form $W_f$ in \ct{E}. On it, we wish to define the relation $\delta \subseteq \NN \times W_f$ given by
\begin{equation}\label{decoration}
\begin{array}{lcl}
n \in \delta(\mbox{sup}_a(t)) & \Leftrightarrow & n_0 \in \alpha(a) \mbox{ and } (\forall b \in f^{-1}(a), m \in \beta(b)) \,  \\
& & [n_1(m) \downarrow \mbox{ and } n_1(m) \in \delta(tb)]
\end{array}
\end{equation}
(we will sometimes call the elements $n \in \delta(w)$ the \emph{decorations} of the tree $w \in W$). It is not so obvious that we can, but for that purpose we introduce the notion of an \emph{attempt}. An attempt is an element $\sigma$ of $\spower(\NN \times W_f)$ such that
\begin{eqnarray*}
(n, \mbox{sup}_a(t)) \in \sigma & \Rightarrow & n_0 \in \alpha(a) \mbox{ and } \\
& & (\forall b \in f^{-1}(a), m \in \beta(b)) \, [n_1(m) \downarrow \mbox{ and } (n_1(m), tb) \in \sigma].
\end{eqnarray*}
If we now put
\[ n \in \delta(w) \Leftrightarrow \mbox{there exists a decoration } \sigma \mbox{ with } (n, w) \in \sigma, \]
the relation $\delta$ will have the desired property. (One direction in (\ref{decoration}) is trivial, the other is more involved and uses the collection axiom.) The W-type in the category of assemblies is now given by $(W, \delta)$ where
\[ W = \{ w \in W_f \, : \, (\exists n \in \NN) \, [n \in \delta(w)] \}. \]
This shows the vailidity of {\bf (WE)} for the display maps.

If $A$ is small and {\bf (WS)} holds in \ct{E}, then $W_f$ is small. Moreover, if $\alpha \subseteq \NN \times A$ is small, one can use the initiality of $W_f$ to define a map \func{d}{W_f}{\spower \NN} by
\begin{eqnarray*}
d(\mbox{sup}_a(t)) & = & \{ n \in \NN \, : \, n_0 \in \alpha(a) \mbox{ and }
\\ & & (\forall b \in f^{-1}(a), m \in \beta(b)) \, [n_1(m) \downarrow \mbox{ and } n_1(m) \in d(tb)] \}.
\end{eqnarray*}
Clearly, $n \in \delta(w)$ iff $n \in d(w)$, so $\delta$ is a small subobject of $\NN \times W_f$. This shows that $(W, \delta)$ is displayed, and the stability of {\bf (WS)} is proved.
\end{proof}

To summarise, we have proved the first half of \reftheo{effascatwsmallmaps}, which we phrase explicitly as:
\begin{coro}{effcatwpredsmallmaps}
If $(\ct{E}, \smallmap{S})$ is a predicative category with small maps, then so is $(\Eff_\ct{E}, \smallmap{S}_\ct{E})$.
\end{coro}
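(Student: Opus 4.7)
The approach is straightforward: this corollary is essentially a formal consequence of \reftheo{excomplpred} from the first paper in the series, applied to the pair $(\Asm_\ct{E}, \smallmap{D}_\ct{E})$ that has been constructed and analysed throughout Section 2. Since $(\Eff_\ct{E}, \smallmap{S}_\ct{E})$ is \emph{defined} as the exact completion $(\overline{\Asm_\ct{E}}, \overline{\smallmap{D}_\ct{E}})$, all that is needed is to verify that $(\Asm_\ct{E}, \smallmap{D}_\ct{E})$ meets the hypotheses of \reftheo{excomplpred}, namely that it is a category with a representable class of display maps satisfying $(\Pi\mathbf{E})$, $(\mathbf{WE})$, and $(\mathbf{NS})$.

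The verification assembles the ingredients already established. First, \refprop{asmisHeyting} shows that $\Asm_\ct{E}$ is a positive Heyting category, and \refprop{axiomsfordispinasm} (together with \reflemm{collfordispmaps}, which supplies $(\mathbf{A7})$) shows that $\smallmap{D}_\ct{E}$ satisfies axioms $(\mathbf{A1})$, $(\mathbf{A3})$--$(\mathbf{A5})$, $(\mathbf{A7})$--$(\mathbf{A10})$, together with $(\mathbf{NE})$ and $(\mathbf{NS})$ in the sense of Appendix B; in particular $(\Asm_\ct{E}, \smallmap{D}_\ct{E})$ is a category with display maps. Second, \refprop{reprfordispl} supplies the representation \func{\rho}{(D,\delta)}{(T,\tau)} for $\smallmap{D}_\ct{E}$, so the class is representable. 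Third, \refprop{functypesinasm} gives $(\Pi\mathbf{E})$ and \refprop{indtypesinasm} gives $(\mathbf{WE})$. The axiom $(\mathbf{NS})$ is included in \refprop{axiomsfordispinasm}.

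With these four hypotheses verified, I would invoke \reftheo{excomplpred} directly: it asserts that the exact completion of such a pair is a predicative category with small maps. Unwinding the definitions from the start of Section 3, the exact completion of $(\Asm_\ct{E}, \smallmap{D}_\ct{E})$ is by definition the pair $(\Eff_\ct{E}, \smallmap{S}_\ct{E})$, and the conclusion follows.

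There is no genuine obstacle here; the substantive work was done in the preceding propositions of Sections 2--3, and the corollary is simply their packaging. The only point that requires a moment's care is to check that the axioms $(\mathbf{A1})$, $(\mathbf{A3})$--$(\mathbf{A5})$, $(\mathbf{A7})$--$(\mathbf{A10})$ and $(\mathbf{NE})$ verified in \refprop{axiomsfordispinasm}, together with representability and $(\Pi\mathbf{E})$, $(\mathbf{WE})$, $(\mathbf{NS})$, exhaust the hypotheses on $(\ct{F},\smallmap{T})$ required by \reftheo{excomplpred}; this is a direct comparison with the list in Appendix B.
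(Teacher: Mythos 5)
Your proposal is correct and matches the paper's own argument exactly: the corollary is stated as a summary of Sections 2--3, obtained by feeding \refprop{asmisHeyting}, \refprop{axiomsfordispinasm} (with \reflemm{collfordispmaps}), \refprop{reprfordispl}, \refprop{functypesinasm} and \refprop{indtypesinasm} into \reftheo{excomplpred}, with $(\Eff_\ct{E}, \smallmap{S}_\ct{E})$ defined as the exact completion of $(\Asm_\ct{E}, \smallmap{D}_\ct{E})$. No further comment is needed.
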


\section{Additional axioms}

To complete the proof of \reftheo{effascatwsmallmaps}, it remains to show the stability of the additional axioms {\bf (M)}, {\bf (PS)} and {\bf (F)}. That is what we will do in this (rather technical) section. We assume again that $(\ct{E}, \smallmap{S})$ is a predicative category with small maps.

\begin{prop}{sepinasm}
Assume the class of small maps in \ct{E} satisfies {\bf (M)}. Then {\bf (M)} is valid for the display maps in the category $\Asm_\ct{E}$ of assemblies and for the small maps in the predicative realizability category $\Eff_\ct{E}$ as well.
\end{prop}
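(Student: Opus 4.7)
The plan is to establish {\bf (M)} first for $\smallmap{D}_\ct{E}$ in $\Asm_\ct{E}$, and then transfer it to $\smallmap{S}_\ct{E} = \overline{\smallmap{D}_\ct{E}}$ in the exact completion $\Eff_\ct{E}$ by the standard trick of pulling a mono back along a cover from the image of the embedding ${\bf y}$.

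For the assembly part, let $m \colon (R, \rho) \to (A, \alpha)$ be a monomorphism in $\Asm_\ct{E}$. As noted just before \refprop{asmisHeyting}, the underlying arrow $m \colon R \to A$ is then a mono in $\ct{E}$, and hence small by {\bf (M)} in $\ct{E}$. Likewise the mono $\rho \hookrightarrow \NN \times R$ is small by {\bf (M)}, so $\rho$ is a small subobject of $\NN \times R$. Using the factorisation of Remark 2.8, $m$ can be written as
\diag{ (R, \rho) \ar[r]^(.44){\cong} & (R, \rho[m]) \ar[r]^(.55)m & (A, \alpha), }
whose right-hand part is a standard display map by \refdefi{displayinAsm}. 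Hence $m$ itself is a display map, which is exactly {\bf (M)} in $\Asm_\ct{E}$.

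For the realizability category, let $m \colon C \hookrightarrow A$ be an arbitrary mono in $\Eff_\ct{E}$. Pick a cover $q \colon {\bf y}A' \twoheadrightarrow A$ with $A'$ an assembly, obtained from the quotient presentation of $A$ as an equivalence relation on an object of $\Asm_\ct{E}$. Form the pullback
\diag{ C' \ar@{->>}[r] \ar@{ >->}[d]_{m'} & C \ar@{ >->}[d]^m \\ {\bf y}A' \ar@{->>}[r]_q & A. }
Since covers and monos are pullback-stable in the regular category $\Eff_\ct{E}$, the top map is a cover and $m'$ is mono. A classical property of exact completions (relying on the fact that ${\bf y}$ is a fully faithful embedding preserving images) ensures that every subobject of ${\bf y}A'$ in $\Eff_\ct{E}$ is isomorphic to ${\bf y}C''$ for some subobject $C'' \hookrightarrow A'$ in $\Asm_\ct{E}$: one covers $C'$ by some ${\bf y}Y$, notes that the composite ${\bf y}Y \to {\bf y}A'$ lies in the image of ${\bf y}$, and identifies $C'$ with the image of this composite. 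Applying the assembly case to $C'' \hookrightarrow A'$ shows it is a display map, so $m'$ is, up to iso, ${\bf y}$ of a display map, and the square above is a covering square exhibiting $m$ as an element of $\overline{\smallmap{D}_\ct{E}} = \smallmap{S}_\ct{E}$.

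The only delicate step is the claim that subobjects of objects in the image of ${\bf y}$ come, up to isomorphism, from subobjects in $\Asm_\ct{E}$; this is the part that genuinely uses the ex/reg machinery rather than anything specific to realizability. Once it is in hand, the remainder reduces to a direct application of {\bf (M)} in $\ct{E}$ and the definition of $\overline{\smallmap{D}_\ct{E}}$ in terms of coverings by ${\bf y}$-images of display maps.
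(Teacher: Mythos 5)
Your proof is correct and takes essentially the same route as the paper: the assembly half (factor a mono of assemblies as an isomorphism followed by a standard display map, using {\bf (M)} in $\ct{E}$ both for the underlying mono and for the inclusion $\rho \subseteq \NN \times R$) is the paper's argument verbatim. For the second half the paper simply invokes the stability of {\bf (M)} under exact completion proved in the first paper of the series; your covering-square argument, resting on the fact that subobjects of ${\bf y}A'$ in the ex/reg completion are, up to isomorphism, of the form ${\bf y}C''$ for subobjects $C''$ of $A'$ in $\Asm_\ct{E}$, is precisely the standard proof of that cited fact, so nothing is genuinely different.
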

\begin{proof}
Let \func{f}{(B, \beta)}{(A, \alpha)} be a monomorphism in the category of assemblies. Then the underlying map $f$ in \ct{E} is a monomorphism as well. Therefore it is small, as is the inclusion $\beta \subseteq \NN \times B$. So the morphism $f$, which factors as
\diag{ (B, \beta) \ar[r]^(.45){\cong} & (B, \beta[f]) \ar[r] & (A, \alpha), }
is a display map of assemblies.

Stability of the axiom {\bf (M)} under exact completion \cite[Proposition 6.4]{bergmoerdijk07b} shows it holds in $\Eff_\ct{E}$ as well.
\end{proof}

\begin{prop}{fullinasm}
Assume the class of small maps in \ct{E} satisfies {\bf (F)}. Then {\bf (F)} is valid for the display maps in the category $\Asm_\ct{E}$ of assemblies and for the small maps in the predicative realizability category $\Eff_\ct{E}$ as well.
\end{prop}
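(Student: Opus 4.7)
The strategy parallels that of \refprop{sepinasm}: I would first establish {\bf (F)} for the display maps in $\Asm_\ct{E}$ directly, and then invoke stability of {\bf (F)} under exact completion (analogously to \cite[Proposition 6.4]{bergmoerdijk07b}) to transfer the axiom to the small maps in $\Eff_\ct{E}$. For the work in $\Asm_\ct{E}$, I would first reduce, via \reflemm{propofpartasm}, to verifying {\bf (F)} for a pair of standard display maps $f\colon (B, \beta[f]) \to (A, \alpha)$ and $g\colon (C, \gamma[g]) \to (A, \alpha)$ between partitioned assemblies, so that $f$ and $g$ are small in $\ct{E}$. Applying {\bf (F)} in $(\ct{E}, \smallmap{S})$ then yields a small map $\pi\colon D_0 \to A$ together with a subobject $P_0 \subseteq D_0 \times_A B \times_A C$ that is generic among total relations from $B$ to $C$ over $A$ in $\ct{E}$.

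The heart of the construction is to \emph{refine} $D_0$ by pairing each element with a numerical tracker. I would set
\[ D \;=\; \{\,(d, r) \in D_0 \times \NN \,:\, \forall b \in B_{\pi d},\; r(\beta b)\!\downarrow \text{ and } \exists c \in C_{\pi d},\; (b, c) \in (P_0)_d \wedge r(\beta b) \in \gamma(c)\,\}, \]
take $\epsilon(d, r) = \{r\}$, and let $p\colon D \to A$ be the composite $D \to D_0 \to A$. Then $(D, \epsilon[p]) \to (A, \alpha)$ should be a standard display map, since $\epsilon$ is visibly a small subobject of $\NN \times D$ and $D \to A$ is small because the defining condition is bounded (its quantifiers range over the small objects $B_{\pi d}$, $C_{\pi d}$, and the small fibers of $\beta$). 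The generic family in $\Asm_\ct{E}$ is the pullback $P$ of $P_0$ along $D \to D_0$, with realizability structure inherited; each fiber $P_{(d,r)} = (P_0)_d$ is by construction a total relation in assemblies, tracked by $r$.

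The main obstacle is the verification of the universal property: given an arbitrary total multi-valued relation $R \subseteq B \times_A C$ in assemblies tracked by some $r_0$, I must produce $(d, r_0) \in D$ with $(P_0)_d \subseteq R$. The key trick is to apply fullness in $\ct{E}$ not to $R$ itself but to the subrelation
\[ R' \;=\; \{\,(b, c) \in R \,:\, r_0(\beta b) \in \gamma(c)\,\}, \]
which remains total in $\ct{E}$ precisely because $r_0$ tracks $R$ in assemblies. Any $d \in D_0$ with $(P_0)_d \subseteq R'$ then automatically satisfies $(d, r_0) \in D$, and $(P_0)_d \subseteq R$, as required. Once {\bf (F)} is established for the display maps in $\Asm_\ct{E}$, the stability of {\bf (F)} under exact completion transfers it to the small maps in $\Eff_\ct{E}$, completing the proof.
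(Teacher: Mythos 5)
Your overall skeleton matches the paper's: reduce via \reflemm{propofpartasm} to display maps between partitioned assemblies, apply {\bf (F)} in $\ct{E}$, refine the index object of the resulting generic \emph{mvs} by pairing each index with a numerical tracker of totality (a bounded condition, hence still a display map), and finally transfer to $\Eff_\ct{E}$ by stability of {\bf (F)} under exact completion (\cite[Proposition 6.25]{bergmoerdijk07b}). Your object $D$ is essentially the paper's $\tilde{Y}$, and the idea of cutting a given \emph{mvs} down to the subrelation witnessed by its tracker before invoking genericity in $\ct{E}$ is a sensible variant of the paper's comparison argument.

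There is, however, a genuine gap in your verification of the universal property. The axiom {\bf (F)} quantifies over \emph{mvs}s $Q$ of $\phi$ over an \emph{arbitrary} base $Z$ (mapping to $X'$), and requires a cover $l\colon U \twoheadrightarrow Z$ together with $k\colon U \to Y$ such that $k^*P \leq l^*Q$; moreover the generic datum itself lives over a composable pair $B \to A \to X$ and comes with a cover $q\colon X' \to X$, not over two maps with common codomain $A$ as in your reading. You only treat a single relation $R$ with one global tracker $r_0$ satisfying $r_0(\beta b) \in \gamma(c)$, i.e.\ the case of a trivial parametrizing base. Over a general $(Z,\zeta)$ the realizer witnessing that $R_z$ is total takes a realizer of $z$ as input, and in addition the subobject $R$ carries its own realizability structure $\rho$, related to the standard one only through a tracker $m$ of the inclusion $i$; so neither your definition of $R'$ nor the trackedness of the final inclusion $k^*P \leq l^*Q$ is available as stated. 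The paper's proof handles exactly this by first covering $Z$ by the partitioned assembly $\tilde{Z}$ of triples $(z,m,n)$, replacing $R$ by the re-realized $\tilde{R}$, applying genericity in $\ct{E}$ to $\tilde{R}$, and then lifting the resulting map $T \to Y$ to $\tilde{Y}$ via the composite code $(m \circ n)_1$. This refinement of the base and the bookkeeping of the inclusion tracker are substantive steps, not routine details, and your proposal would need them to establish {\bf (F)} in the form actually required for the exact-completion transfer.
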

\begin{proof}
It is sufficient to show the validity of {\bf (F)} in the category of assemblies, for we showed the stability of this axiom under exact completion in \cite[Proposition 6.25]{bergmoerdijk07b}. So we need to find a generic \emph{mvs} in the category of assemblies for any pair of display maps \func{g}{(B, \beta)}{(A, \alpha)} and \func{f}{(A, \alpha)}{(X, \chi)}. In view of Lemma 6.23 from \cite{bergmoerdijk07b} and \reflemm{propofpartasm} above, we may without loss of generality assume that $g$ and $f$ are display maps between \emph{partitioned} assemblies.

We apply Fullness in \ct{E} to obtain a diagram of the form
\diag{ P \ar@{ >->}[r] \ar@{->>}[dr] & Y \times_X B \ar[rr] \ar[d] & & B \ar[d]^g \\
& Y \times_X A \ar[rr] \ar[d]& & A \ar[d]^f \\
& Y \ar[r]_s & X' \ar@{->>}[r]_q & X, }
where $P$ is a generic displayed \emph{mvs} for $g$. This allows us to obtain a similar diagram of partitioned assemblies
\diag{ (\tilde{P}, \tilde{\pi}) \ar@{ >->}[r] \ar@{->>}[dr] & (\tilde{Y} \times_X B, \tilde{\upsilon} \times \beta) \ar[d] \ar[rr] & & (B, \beta) \ar[d]^g \\
& (\tilde{Y} \times_X A, \tilde{\upsilon} \times \alpha) \ar[rr] \ar[d] & & (A, \alpha) \ar[d]^f \\
& (\tilde{Y}, \tilde{\upsilon}) \ar[r]_{\tilde{s}} & (X', \chi') \ar@{->>}[r]_q & (X, \chi), }
where we have set
\begin{eqnarray*}
\chi'(x') & = & \chi (qx') \mbox{ for } x' \in X', \\
\tilde{Y} & = & \{ (y, n) \in Y \times \NN \, : \, n \mbox{ realizes the statement that } P_y \to A_{qsy} \mbox{ is a cover} \} \\
& = & \{ (y, n) \in Y \times \NN \, : (\forall a \in A_{qsy})(\exists b \in B_a) \, [(y, b) \in P \mbox{ and } n (\alpha(a)) = \beta(b)] \}, \\
\tilde{\upsilon}(y, n) & = & \langle sqy, n \rangle \mbox{ for } (y, n) \in \tilde{Y}, \\
\tilde{P} & = & \tilde{Y} \times_Y P \\
& = & \{ (y, n, b) \in Y \times \NN \times B \, : \, (y, n) \in \tilde{Y}, (y, b) \in P \}, \\
\tilde{\pi}(y, n, b) & = & \langle n, \beta(b) \rangle \mbox{ for } (y, n, b) \in P.
\end{eqnarray*}
The reader should verify that:
\begin{enumerate}
\item $q$ is tracked and a cover.
\item $\tilde{s}$ is tracked and display, since $\tilde{Y}$ is defined using a bounded formula.
\item The inclusion $(\tilde{P}, \tilde{\pi}) \subseteq (\tilde{Y} \times_X B, \tilde{\upsilon} \times \beta)$ is tracked.
\item It follows from the definition of $\tilde{Y}$ that the map $(\tilde{P}, \tilde{\pi}) \to (\tilde{Y} \times_X A, \tilde{\upsilon} \times \alpha)$ is a cover.
\end{enumerate}
We will now prove that $(\tilde{P}, \tilde{\pi})$ is the generic \emph{mvs} for $g$ in assemblies.

Let $R$ be an \emph{mvs} of $g$ over $Z$, as in:
\diag{ (R, \rho) \ar@{ >->}[r]^(.37)i \ar@{->>}[dr] & (Z \times_X B, \zeta \times \beta) \ar[d] \ar[rr] & & (B, \beta) \ar[d]^g \\
& (Z \times_X A, \zeta \times \alpha) \ar[rr] \ar[d] & &  (A, \alpha) \ar[d]^f \\
& (Z, \zeta) \ar[r]_{t} & (X', \chi') \ar@{->>}[r]_q & (X, \chi). }
Since every object is covered by a partitioned assembly (see \reflemm{propofpartasm}), we may assume (without loss of generality) that $(Z, \zeta)$ is a partitioned assembly. Now we obtain a commuting square
\diag{ (\tilde{R}, \tilde{\rho}) \ar[d] \ar[r] & (R, \rho) \ar[d] \\
(\tilde{Z}, \tilde{\zeta}) \ar@{->>}[r]_d & (Z, \zeta), }
in which we have defined
\begin{eqnarray*}
\tilde{Z} & = & \{ (z, m, n) \in Z \times \NN^2 \, : \, m \mbox{ tracks } i \mbox{ and } \\ & & n \mbox{ realizes the statement that } R_z \to A_{qtz} \mbox{ is a cover} \} \\
& = & \{ (z, m, n)  \, : \, (\forall (z, b) \in R, k \in \rho(z, b)) \,  [m(k) = (\zeta \times \beta)(z, b)] \\
& & \mbox{and } (\forall a \in A_{qtz})( \exists b \in B_a) \,  [(z, b) \in R \mbox{ and } n(\alpha(a)) \in \rho(z, b) \} \\
\tilde{\zeta}(z, m, n) & = & \langle \zeta z, m, n \rangle \mbox{ for } (z, m, n) \in \tilde{Z} \\
\tilde{R} & = & \{ (z, m, n, b) \in \tilde{Z} \times B \, : \, (z, b) \in R \mbox{ and } n (\alpha(gb)) \in \rho(z, b) \} \\
\tilde{\rho}(z, m, n, b) & = & \langle \tilde{\zeta}(z, m, n), \beta(b) \rangle \mbox{ for } (z, m, n, b) \in \tilde{R}
\end{eqnarray*}
It is easy to see that all the arrows in this diagram are tracked, and the projection $(\tilde{Z}, \tilde{\zeta}) \to (Z, \zeta)$ is a cover. It is also easy to see that $(\tilde{R}, \tilde{\rho})$ is still an \emph{mvs} of $g$ in assemblies. Note also that $(\tilde{R}, \tilde{\rho})$ and $(\tilde{Z}, \tilde{\zeta})$ are partitioned assemblies.

Since the forgetful functor to \ct{E} preserves \emph{mvs}s in general, and displayed ones between partitioned assemblies, $\tilde{R}$ is also a displayed \emph{mvs} of $g$ in \ct{E}. Therefore there is a diagram of the form
\diag{ \tilde{R} \ar[d] & l^* P \ar[d] \ar[r] \ar[l] & P \ar[d] \\
\tilde{Z} & T \ar@{->>}[l]^k \ar[r]_l & Y }
in \ct{E} with $tdk = sl$. We turn $T$ into a partitioned assembly by putting $\tau(t) = \tilde{\zeta}(kt)$ for all $t \in T$.

Claim: the map \func{l}{T}{Y} factors through $\tilde{Y} \to Y$ via a map \func{\tilde{l}}{T}{\tilde{Y}} which can be tracked. Proof: if $k(t) = (z, m, n)$ and $l(t) = y$ for some $t \in T$, we set
\[ \tilde{l}(t) = (y, (m \circ n)_1), \]
where $m \circ n$ is the code of the partial recursive function obtained by composing the functions coded by $m$ with $n$. We first have to show that this is well-defined, i.e., $\tilde{l}(t) \in \tilde{Y}$. Since $P$ is an \emph{mvs} in \ct{E}, we can find for any $a \in A_{qsy}$ an element $b \in B_a$ with $(y, b) \in P$. If we take such a $b$, it follows from $P_y = P_{lt} \subseteq \tilde{R}_{kt}$, that $(z, m, n, b) \in \tilde{R}$, and therefore $n (\alpha(a)) \in  \rho(z, b)$. Moreover, it follows from the fact that $(z, m, n) \in \tilde{Z}$, that $(m \circ n)_1 (\alpha(a)) = \beta(b)$. This shows that $\tilde{l}(t) \in \tilde{Y}$. That $\tilde{l}$ is tracked is easy to see.

As a result, we obtain a diagram of the form
\diag{ (\tilde{R}, \tilde{\rho}) \ar[d] & \tilde{l}^* (\tilde{P}, \tilde{\pi}) \ar[d] \ar[r] \ar[l] & (\tilde{P}, \tilde{\pi}) \ar[d] \\
(\tilde{Z}, \tilde{\zeta}) & (T, \tau) \ar@{->>}[l]^k \ar[r]_{\tilde{l}} & (\tilde{Y}, \tilde{\upsilon}). }
Given the definitions of $\tilde{\rho}$ and $\tilde{\pi}$, one sees that $\tilde{l}^* (\tilde{P}, \tilde{\pi}) \to (\tilde{R}, \tilde{\rho})$ is tracked. This completes the proof.
\end{proof}

When it comes to the axiom {\bf (PS)} concerning power types, there seems to be no reason to believe that it will be inherited by the assemblies. But, fortunately, it will be inherited by its exact completion, and for our purposes that is just as good.
\begin{prop}{powtypesinasm}
Assume the class of small maps in \ct{E} satisfies {\bf (PS)}. Then {\bf (PS)} is valid in the realizability category $\Eff_\ct{E}$ as well.
\end{prop}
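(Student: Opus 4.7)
The plan is to construct the small power object directly in $\Eff_\ct{E}$, rather than attempt a form of {\bf (PS)} at the level of assemblies (which, as the authors remark, cannot plausibly be established there). The strategy is the standard one for exact completions: I build a ``pre-power object'' in $\Asm_\ct{E}$ whose quotient by an extensional equivalence relation serves as the actual power object in $\Eff_\ct{E}$.

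By \reflemm{propofpartasm} every small map in $\Eff_\ct{E}$ is covered by a standard display map \func{f}{(B, \beta[f])}{(A, \alpha)} between partitioned assemblies, so both \func{f}{B}{A} and the mono $\beta \subseteq \NN \times B$ are small in \ct{E}. Since $\NN$ is small and $f$ is small, the map $\NN \times_A B \to A$ is small in \ct{E}, so applying {\bf (PS)} in \ct{E} yields a small map $P_0 \to A$ whose fibre over $a \in A$ is the collection of small subobjects $\sigma \subseteq \NN \times B_a$. Turn $P_0$ into an assembly by setting $n \in \pi(a, \sigma) \Leftrightarrow n \in \alpha(a)$; the resulting map $(P_0, \pi) \to (A, \alpha)$ is then a standard display map in $\Asm_\ct{E}$. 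Over the pullback to $(B, \beta[f])$, I set up the generic subobject with underlying object $\{(b, a, \sigma) : \exists n\,(n, b) \in \sigma\}$ and realizers $n$ realizing $(b, a, \sigma)$ iff $(n, b) \in \sigma$.

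Next I introduce the extensional equivalence relation ${\sim}$ on $(P_0, \pi)$, declaring $(a, \sigma) \sim (a', \sigma')$ iff $a = a'$ and there exist $r, s \in \NN$ such that $r$ converts every realizer of $\sigma$ at any $b \in B_a$ into a realizer of $\sigma'$ at $b$, and $s$ does the converse. This ${\sim}$ is an equivalence relation in $\Asm_\ct{E}$ whose graph is a display map (the witnessing clauses involve only bounded quantification over the small objects $B_a$, $\beta$ and $\NN$), so $(P_0, \pi)/{\sim}$ is a legitimate object of $\Eff_\ct{E}$ equipped with a small map to $(A, \alpha)$. The generic subobject descends along the quotient to a small subobject in $\Eff_\ct{E}$, producing the candidate power object.

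The principal obstacle is verifying the universal property: every small subobject of $(B, \beta[f])$ over $(A, \alpha)$ in $\Eff_\ct{E}$ must arise, uniquely up to ${\sim}$, as the pullback of the generic subobject along some map into $(P_0, \pi)/{\sim}$. This requires unpacking the representation of small subobjects in the exact completion as equivalence classes of functional relations in $\Asm_\ct{E}$, and then using the collection axiom {\bf (A7)} already verified for $\Asm_\ct{E}$, together with {\bf (PS)} in \ct{E}, to assemble, for each $b \in B_a$, the set $\sigma(b) \subseteq \NN$ of realizers witnessing ``$b$ lies in the subobject'' into a bona fide small element of $P_0$. Collection is the crucial ingredient that converts the a priori unbounded collection of realizers into a small one classified by $P_0$, while the functional-relation condition on the original subobject ensures that the resulting classifying map $(A, \alpha) \to (P_0, \pi)/{\sim}$ is tracked, is well-defined modulo ${\sim}$, and that the correspondence between $\sim$-classes of such maps and small subobjects is bijective.
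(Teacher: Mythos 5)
Your overall architecture is the right one and matches the paper's: build a ``pre-power object'' in $\Asm_\ct{E}$, displayed when the data are displayed, and obtain the genuine power class object in $\Eff_\ct{E}$ as its quotient by extensional equality (the paper isolates this as the notion of a \emph{weak} power class object, together with a lemma saying that in the exact completion the true power object is the quotient of a weak one by extensional equality). But there is a genuine gap in your definition of $(P_0,\pi)$: you let $\sigma$ range over \emph{all} small subobjects of $\NN\times B_a$ and you put $n\in\pi(a,\sigma)\Leftrightarrow n\in\alpha(a)$. With these choices your ``generic subobject'' is not a subobject in the category of assemblies at all. A mono into the pullback $(B,\beta[f])\times_{(A,\alpha)}(P_0,\pi)$ must be tracked: from a realizer $n$ with $(n,b)\in\sigma$ one has to compute, uniformly, a realizer of $b$ in $\beta[f]$ together with a realizer of $(a,\sigma)$. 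For an arbitrary small $\sigma$ no such conversion exists, and for the same reason the classifying square cannot be a pullback of assemblies recovering the realizability data of a given displayed family. The repair is exactly the extra structure the paper builds into its weak power object: restrict $P_0$ to those $\sigma$ admitting a single $n\in\NN$ such that $n(m)$ is defined and realizes $b$ for every $b$ and every $m\in\sigma(b)$, and take these uniform trackers $n$ (not the realizers of $a$) as the realizers of $(a,\sigma)$. With that change the generic mono and the classifying maps are tracked, and the displayedness of $P_0$ still follows from \textbf{(PS)} and bounded separation in $\ct{E}$.

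A secondary point: collection \textbf{(A7)} is not what makes the realizer sets small. A small subobject in $\Eff_\ct{E}$ is, by the definition of the induced class of small maps together with axiom \textbf{(A10)}, covered by a \emph{displayed} mono of assemblies, and for a displayed mono the relation $\sigma\subseteq\NN\times S$ is small by definition; what remains in the verification of the universal property is only to descend the classifying map along a cover of the base, using that extensionally equal values are identified in the quotient. So that step is cleaner than your sketch suggests and does not require \textbf{(A7)}.
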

\begin{proof}
For the purpose of this proof, we introduce the notion of a weak power class object. Recall that the power class object is defined as:
\begin{defi}{powerclassobj}
By a \emph{$D$-indexed family of subobjects} of $C$, we mean a subobject $R \subseteq C \times D$. A $D$-indexed family of subobjects $R \subseteq C \times D$ will be called \emph{\smallmap{S}-displayed} (or simply \emph{displayed}), whenever the composite
\[ R \subseteq C \times D \rTo D \]
belongs to \smallmap{S}. If it exists, the \emph{power class object} $\spower X$ is the classifying object for the displayed families of subobjects of $X$. This means that it comes equipped with a displayed $\spower X$-indexed family of subobjects of $X$, denoted by $\in_X \subseteq X \times \spower X$ (or simply $\in$, whenever $X$ is understood), with the property that for any displayed $Y$-indexed family of subobjects of $X$, $R \subseteq X \times Y$ say, there exists a unique map \func{\rho}{Y}{\spower X} such that the square
\diag{ R \ar@{ >->}[d] \ar[r] & \in_X \ar@{ >->}[d] \\
X \times Y \ar[r]_(.45){\id \times \rho} & X \times \spower X}
is a pullback.
\end{defi}
If a classifying map $\rho$ as in the above diagram exists, but is not unique, we call the power class object \emph{weak}. We will denote a weak power class object of $X$ by $\wspower X$. We will show that the categories of assemblies has weak power class objects, which are moreover ``small'' (i.e., the unique map to the terminal object is a display map). This will be sufficient for proving the stability of {\bf (PS)}, as we will show in a lemma below that real power objects in the exact completion are constructed from the weak ones by taking a quotient.

Let $(X, \chi)$ be an assembly. We define an assembly $(P, \pi)$ by
\begin{eqnarray*}
P & = & \{ (\alpha \in \spower X, \func{\phi}{\alpha}{\spower \NN}) \, : \, (\forall x \in \alpha)(\exists n \in \NN) \, [n \in \phi(x)] \mbox{ and } \\
& & (\exists n \in \NN) \, (\forall x \in \alpha, m \in \phi(x)) \, [n(m) \in \chi(x)] \}, \\
\pi(\alpha, \phi) & = & \{ n \in \NN \, : \, (\forall x \in \alpha, m \in \phi(x)) \, [n(m) \in \chi(x)] \}.
\end{eqnarray*}
We claim that this assembly together with the membership relation $(E, \eta) \subseteq (X, \chi) \times (P, \pi)$ defined by
\begin{eqnarray*}
E & = & \{ (x \in X, \alpha \in \spower X, \func{\phi}{\alpha}{\spower \NN}) \, : \, (\alpha, \phi) \in P \mbox{ and } x \in \alpha \}, \\
\eta(x, \alpha, \phi) & = & \{ n \in \NN \, : \, n_0 \in \phi(x) \mbox{ and } n_1 \in \pi(\alpha, \phi) \}
\end{eqnarray*}
is a weak power object in assemblies.

For let $(S, \sigma)$ be a (standardly) displayed $(Y, \upsilon)$-indexed family of subobjects of $(X, \chi)$. This means that the underlying morphism \func{f}{S}{Y} is small, and $\sigma = \sigma[f]$ for a small relation $\sigma \subseteq \NN \times S$. Since $f$ is small, we obtain a pullback diagram of the form
\diag{ S \ar@{ >->}[d] \ar[r] & \in_X \ar@{ >->}[d] \\
X \times Y \ar[r]_(.45){\id \times s} & X \times \spower X}
in \ct{E}. We use this to build a similar diagram in the category of assemblies:
\diag{ (S, \sigma) \ar@{ >->}[d] \ar[r] & (E, \eta) \ar@{ >->}[d] \\
(X, \chi) \times (Y, \upsilon) \ar[r]_(.48){\id \times \overline{s}} & (X, \chi) \times (P, \pi),}
where we have set
\[ \overline{s}(y) = (sy, \lambda x \in sy. \sigma(x, y) ). \]
One quickly verifies that with $\overline{s}$ being defined in this way, the square is actually a pullback. This shows that $(P, \pi)$ is indeed a weak power object.

If $(X, \chi)$ is a displayed assembly, so both $X$ and $\chi \subseteq \NN \times X$ are small, and {\bf (PS)} holds in \ct{E}, then $P$ and $\pi$ are defined by bounded separation from small objects in \ct{E}. Therefore $(P, \pi)$ is a displayed object. In the exact completion, the power class object is constructed from this by quotienting this object (see the lemma below), and is therefore small.
\end{proof}

\noindent
To complete the proof of the proposition above, we need to show the following lemma, which is a variation on a result in \cite{bergmoerdijk07b}.

\begin{lemm}{redlemmforpowobj}
Let \func{{\bf y}}{(\ct{F}, \smallmap{T})}{(\overline{\ct{F}}, \overline{\smallmap{T}})} be the exact completion of a category with display maps. When $\wspower X$ is a weak power object for an $\overline{\smallmap{T}}$-small object $X$ in \ct{F}, then the power class object in $\overline{\ct{F}}$ exists; in fact, it can be obtained by quotienting ${\bf y}\wspower X$ by extensional equality.
\end{lemm}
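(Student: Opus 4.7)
The plan is to construct the power class object $\spower X$ in $\overline{\ct{F}}$ as the quotient of $\mathbf{y}\wspower X$ by extensional equality, and then verify its universal property by reducing any classification problem in $\overline{\ct{F}}$ to a classification problem in $\ct{F}$ along suitable covers.

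First I would define the extensional equality relation $\sim$ on $\wspower X$ inside $\ct{F}$ by $p \sim q \Leftrightarrow \forall x \in X.\,(x \in^{\mathrm{w}}_X p \leftrightarrow x \in^{\mathrm{w}}_X q)$, where $\in^{\mathrm{w}}_X \subseteq X \times \wspower X$ is the weak membership relation. Since $X$ is small and $\in^{\mathrm{w}}_X \to \wspower X$ is displayed, this formula cuts out a genuine subobject of $\wspower X \times \wspower X$, and one checks routinely that it is an equivalence relation. Via $\mathbf{y}$ this becomes an equivalence relation in $\overline{\ct{F}}$, whose quotient I call $\spower X$. I then define $\in_X \hookrightarrow \mathbf{y}X \times \spower X$ as the image factorisation of the composite $\mathbf{y}(\in^{\mathrm{w}}_X) \to \mathbf{y}X \times \mathbf{y}\wspower X \to \mathbf{y}X \times \spower X$; the fact that $\in_X \to \spower X$ lies in $\overline{\smallmap{T}}$ follows because it is covered by $\mathbf{y}(\in^{\mathrm{w}}_X) \to \mathbf{y}\wspower X$, which is $\mathbf{y}$ of a display map in $\ct{F}$.

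For the universal property, suppose $R \hookrightarrow \mathbf{y}X \times A$ is an $\overline{\smallmap{T}}$-displayed family in $\overline{\ct{F}}$. I would pick a cover $\mathbf{y}A_{0} \twoheadrightarrow A$ and pull $R$ back to obtain $R_{0} \hookrightarrow \mathbf{y}X \times \mathbf{y}A_{0}$; by the definition of $\overline{\smallmap{T}}$ (a map is small if covered by $\mathbf{y}f$ with $f \in \smallmap{T}$) and fullness of $\mathbf{y}$ on subobjects, I can then take a further cover $\mathbf{y}A_{1} \twoheadrightarrow \mathbf{y}A_{0}$ over which $R_{0}$ becomes the image of a genuinely displayed family $S \hookrightarrow X \times A_{1}$ in $\ct{F}$. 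The weak power structure supplies a classifying map $s : A_{1} \to \wspower X$ for $S$, and composing with the projection to the quotient gives $\mathbf{y}A_{1} \to \spower X$. The key point is then to show that this morphism descends along the composite cover $\mathbf{y}A_{1} \twoheadrightarrow A$, so as to produce the desired classifying map $A \to \spower X$, and that the constructed membership $\in_X$ pulls back to $R$.

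The main obstacle will be precisely this descent-and-uniqueness step: weak classifiers are only unique up to $\sim$, so the equivalence relation has to compensate for this non-uniqueness exactly. Concretely, on the kernel pair of $\mathbf{y}A_{1} \twoheadrightarrow A$ one obtains two parallel weak classifiers $A_{1} \times_{A_0} A_{1} \rightrightarrows \wspower X$ (pulled back from the two projections) of one and the same displayed subobject, and I must verify that they are then pairwise $\sim$-related: this is just the definition of $\sim$ combined with the pullback property of $\in^{\mathrm{w}}_X$. Conversely, I must check that $\sim$ identifies no more than needed, so that pullback of $\in_X$ along the induced map really recovers $R$. Once both directions are established, the universal property and the fact that $\overline{\ct{F}}$ is exact (so effective quotients exist and are stable) give uniqueness of the classifying map, and pullback-stability of $\in_X$ reduces to the corresponding property of $\in^{\mathrm{w}}_X$ together with routine manipulations in the exact completion.
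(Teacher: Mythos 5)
Your proposal is correct and follows essentially the same route as the paper: quotient $\mathbf{y}\wspower X$ by the extensional equality relation $\sim$ (whose mono is small because $X$ is $\overline{\smallmap{T}}$-small), reduce a displayed family over $A$ to a genuine displayed subobject in $\ct{F}$ via a cover, classify it with the weak classifier, and descend along the cover using the fact that any two weak classifiers of the same family are $\sim$-related. The paper's only packaging difference is that it uses a single covering square together with axiom {\bf (A10)} to make the lifted family monic, rather than your two-stage cover, but the descent and uniqueness arguments are identical.
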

\begin{proof}
We will drop occurences of {\bf y} in the proof.

On $\wspower X$ one can define the equivalence relation
\[ \alpha \sim \beta \Leftrightarrow (\forall x \in X)[ x \in \alpha \leftrightarrow x \in \beta]. \]
As $X$ is assumed to be $\overline{\smallmap{T}}$-small, the mono $\sim \, \subseteq \wspower X \times \wspower X$ is small, and therefore this equivalence relation has a quotient. We will write this quotient as $\spower X$ and prove that it is the power class object of $X$ in $\overline{\ct{F}}$. The elementhood relation of $\spower X$ is given by
\[ x \in [\alpha] \leftrightarrow x \in \alpha, \]
which is clearly well-defined. In particular,
\diag{\in_X \ar@{ >->}[d] \ar@{->>}[r] & \in_X \ar@{ >->}[d] \\
X \times \wspower X \ar@{->>}[r]_(.48){X \times q} & X \times \spower X }
is a pullback.

Let $U \subseteq X \times I \rTo I$ be an $\overline{\smallmap{T}}$-displayed $I$-indexed family of subobjects of $X$. We need to show that there is a unique map \func{\rho}{I}{\spower X} such that $(\id \times \rho)^* \in_X = U$.

Since $U \rTo I \in \overline{\smallmap{T}}$, there is a map $V \rTo J \in \smallmap{T}$ such that the outer rectangle in
\diag{V \ar[d]_f \ar@{->>}[r] & U \ar@{ >->}[d] \\
X \times J \ar[r] \ar[d] & X \times I \ar[d] \\
J \ar@{->>}[r]_p & I,}
is a covering square. Now also $\func{f}{V}{X \times J} \in \smallmap{T}$, and by replacing $f$ by its image if necessary and using the axiom {\bf (A10)}, we may assume that the top square (and hence the entire diagram) is a pullback and $f$ is monic.

So there is a map \func{\sigma}{J}{\wspower X} in \ct{E} with $(\id \times \sigma)^* \in_X = U$, by the ``universal'' property of $\wspower X$ in \ct{E}. As
\[ pj = pj' \Rightarrow V_j = V_{j'} \subseteq X \Rightarrow \sigma(j) \sim \sigma(j')  \]
for all $j, j' \in J$, the map $q \sigma$ coequalises the kernel pair of $p$. Therefore there is a map \func{\rho}{I}{\spower X} such that $\rho p = q\sigma$:
\diag{V \ar@{ >->}[d]_f \ar@{->>}[r] & U \ar@{ >->}[d] \ar[r] & \in_X \ar@{ >->}[d] \\
X \times J \ar@{->>}[r] \ar[d] & X \times I \ar[d] \ar[r] & X \times \spower X \ar[d] \\
J \ar@{->>}[r]^p \ar@/_/@<-1ex>[rr]_{q\sigma} & I \ar[r]^{\rho} & \spower X. }
The desired equality $(\id \times \rho)^* \in_X = U$ now follows. The uniqueness of this map follows from the definition of $\sim$.
\end{proof}

\noindent
The proof of this proposition completes the proof of our main result, \reftheo{effascatwsmallmaps}.

\section{Realizability models for set theory}

\reftheo{existmodelsetth} and \reftheo{effascatwsmallmaps} together imply that for any predicative category with small maps $(\ct{E}, \smallmap{S})$, the category $(\Eff_\ct{E}, \smallmap{S}_\ct{E})$ will contain a model of set theory. As already mentioned in the introduction, many known constructions of realizability models of intuitionistic (or constructive) set theory can be viewed as special cases of this method. In addition, our result also shows that these constructions can be performed inside weak metatheories such as {\bf CZF}, or inside other sheaf or realizability models.

To illustrate this, we will work out one specific example, the realizability model for {\bf IZF} described in McCarty \cite{mccarty84} (we will comment on other examples in the remark closing this section). To this end, let us start with the category $\Sets$ and fix an inaccessible cardinal $\kappa \gt \omega$. The cardinal $\kappa$ can be used to define a class of small maps \smallmap{S} in $\Sets$ by declaring a morphism to be small, when all its fibres have cardinality less than $\kappa$ (these will be called the $\kappa$-small maps). Because the axiom {\bf (M)} then holds both in \ct{E} and the category of assemblies, the exact completion $\overline{\Asm}$ of the assemblies is really the ordinary exact completion, i.e., the effective topos. This means we have defined a class of small maps in the effective topos. We will now verify that this is the same class of small maps as defined in \cite{joyalmoerdijk95}.

\begin{lemm}{coincidingsmmaps}
The following two classes of small maps in the effective topos coincide:
\begin{itemize}
\item[(i)] Those covered by a map $f$ between partitioned assemblies for which the underlying map in \ct{E} is $\kappa$-small (as in \cite{joyalmoerdijk95}).
\item[(ii)] Those covered by a display map $f$ between assemblies (as above).
\end{itemize}
\end{lemm}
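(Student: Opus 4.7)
The plan is to reduce everything to \reflemm{propofpartasm}, specifically to parts (2) and (3). The key observation is that in the setting at hand (where \ct{E} is $\Sets$ and \smallmap{S} consists of $\kappa$-small maps), a map between partitioned assemblies is a display map if and only if the underlying map in \ct{E} is $\kappa$-small. So the class described in (i) is literally the class of maps covered by a \emph{display map between partitioned assemblies}, while (ii) is the class of maps covered by an \emph{arbitrary display map between assemblies}. The two inclusions then fall out by pure formalities about the covering relation.

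For the inclusion (i) $\subseteq$ (ii), I would simply note that a display map between partitioned assemblies is in particular a display map between assemblies, so any map covered by the former is covered by the latter.

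For the inclusion (ii) $\subseteq$ (i), I would use \reflemm{propofpartasm}(3), which says that every display map between assemblies is itself covered by a display map between partitioned assemblies. Hence if $g$ is covered by a display map $f$ between assemblies and $f$ in turn is covered by a display map $f'$ between partitioned assemblies, then by transitivity of the covering relation (i.e.\ by composing the two covering squares and using pullback-stability of covers in \Asm, which we established in Section 2), $g$ is covered by $f'$, placing it in class (i).

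The only step that requires any real care is the appeal to transitivity of the covering relation, but this is routine: the upper covering square pulls back along the lower one, and composition of covers is again a cover in $\Asm_\ct{E}$. No additional input is needed, and in particular we do not need to reopen any question about how small maps behave in the exact completion since both classes in the statement are defined directly in terms of the covering relation.
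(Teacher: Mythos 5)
Your proof is correct and follows the same route as the paper, which disposes of the lemma by citing \reflemm{propofpartasm} together with transitivity of the covering relation; your identification of class (i) with maps covered by display maps between partitioned assemblies via part (2) of that lemma, and your use of part (3) plus transitivity for the inclusion (ii) $\subseteq$ (i), is exactly the intended argument, just written out in full.
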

\begin{proof}
Immediate from \reflemm{propofpartasm}, and the fact that the covering relation is transitive.
\end{proof}

By the general existence result, the effective topos contains a model of {\bf IZF} which we will call $V$.

\begin{prop}{princinmodelforizf}
In $V$ the following principles hold: {\bf (AC), (RDC), (PA), (MP), (CT)}. Moreover, $V$ is uniform, and hence also {\bf (UP)}, {\bf (UZ)}, {\bf (IP)} and {\bf (IP$_\omega$)} hold.
\end{prop}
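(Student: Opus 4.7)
The plan is to reduce each principle, via the Ext/Int correspondence recalled in the introduction, to an equivalent statement in the categorical logic of $\Eff$ for the higher arithmetic types, and then to verify that statement by standard realizability arguments. Since ${\rm Ext}(\omega) \cong \NN$, ${\rm Ext}(a^b) \cong {\rm Ext}(a)^{{\rm Ext}(b)}$ and ${\rm Ext}({\cal P}a) \cong \spower({\rm Ext}(a))$, quantification over natural numbers, functions $\omega \to \omega$, or subsets of $\omega$ in $V$ reduces to quantification over the corresponding objects in $\Eff$, so what is true in higher-type arithmetic in $\Eff$ transfers directly to the internal model.

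For \textbf{(MP)} and \textbf{(CT)}, the translations are immediate: Markov's principle for $\NN$ in $\Eff$ is witnessed by unbounded search on a realizer of $\neg\neg \exists n \, \phi(n)$, and any morphism $\NN \to \NN$ in $\Eff$ is by construction tracked by a recursive function. For \textbf{(AC)} and \textbf{(RDC)}, I would use internal projectivity of $\NN$ in $\Eff$: since $\NN$ can be presented as a partitioned assembly with unique realizers (the diagonal), any cover onto $\NN$ admits a tracked section, and iterating this gives relativised dependent choice in the usual fashion.

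For \textbf{(PA)}, I would invoke \reflemm{propofpartasm}: every object of $\Eff$ is covered by a partitioned assembly, and the $V$-set corresponding to a partitioned assembly is projective, because surjections out of such an object can be split by choosing preimages uniformly using the uniqueness of realizers. Hence in $V$ every set is covered by a projective set.

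The main obstacle is uniformity. The heart of the argument is to show that any morphism $\spower(\NN) \to \NN$ in $\Eff$ is constant: this rests on the observation that elements of $\spower(\NN)$ carry no nontrivial realizer information (they are classified by subobjects of $\NN$, and a realizer of membership does not depend on the ambient subobject), so any realizer tracking such a morphism must be independent of the subobject chosen. Pulling this back through Ext yields \textbf{(UP)} in $V$; and \textbf{(UZ)}, \textbf{(IP)}, \textbf{(IP$_\omega$)} follow from uniformity by the now-standard derivations in realizability (for instance, \textbf{(UZ)} by applying \textbf{(UP)} to the characteristic map of a putative nontrivial decomposition of the universe, and the independence-of-premiss principles by an analogous uniform extraction of witnesses). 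I expect the verification of uniformity in the precise form needed, together with its careful transfer through the initial-algebra structure of $V$, to be the only step requiring real care; the remaining principles then follow routinely.
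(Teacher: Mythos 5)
Your treatment of \textbf{(AC)}, \textbf{(RDC)}, \textbf{(MP)}, \textbf{(CT)} and \textbf{(PA)} matches the paper's: reduce to the ambient topos via the Ext/Int correspondence, and for \textbf{(PA)} use that every (small) object is covered by a (small) partitioned assembly and that partitioned assemblies are internally projective. That part is fine.

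There is, however, a genuine gap in your uniformity argument, and it is precisely at the step you yourself flag as the heart of the matter. The set-theoretic principles \textbf{(UP)}, \textbf{(UZ)}, \textbf{(IP)} and \textbf{(IP$_\omega$)} as formulated in Appendix A contain an \emph{unbounded} quantifier $\forall x$ ranging over all sets, i.e.\ over the whole object $V$ of $\Eff$ --- not over subsets of $\omega$. The statement you propose to prove, that every morphism $\spower(\NN) \to \NN$ in $\Eff$ is constant, is the uniformity principle of higher-order arithmetic (the one quoted in the introduction for ${\cal P}\NN$); it cannot be ``pulled back through Ext'' to give the set-theoretic \textbf{(UP)}, because Ext only translates bounded quantification over a fixed set $a$ into quantification over ${\rm Ext}(a)$, and there is no set $a$ with ${\rm Ext}(a) \cong V$. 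What is actually needed --- and what the proposition asserts --- is that $V$ itself is a \emph{uniform object} of $\Eff$, i.e.\ that there is a single natural number realizing (the existence of) every element of $V$. The paper establishes this by a specific argument about the construction of $V$: the representation can be chosen as a map of partitioned assemblies $(D,\delta) \to (T,\tau)$ with $T$ uniform (every element realized by $0$); the associated W-type is computed in assemblies, where the realizers of $\sup_a(t)$ are pairs $\langle n_0, n_1\rangle$ with $n_0$ realizing $a$ and $n_1$ sending realizers of $b \in f^{-1}(a)$ to realizers of $tb$; and then a fixed point of the recursion equation $f = \langle 0, \lambda n.f\rangle$, obtained from the Recursion Theorem, realizes \emph{every} well-founded tree, hence every element of the quotient $V$. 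Your proposal contains no argument of this kind, and the realizer-independence observation you make about $\spower(\NN)$ does not transfer to $V$, whose elements are equivalence classes of well-founded trees carrying genuine realizer data. (A minor additional point: the paper notes that deriving \textbf{(IP)} and \textbf{(IP$_\omega$)} from uniformity uses classical logic in the metatheory, which your sketch elides.)
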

\begin{proof}
The Axioms of Countable and Relativised Dependent Choice hold in $V$, because they hold in the effective topos (recall the remarks on the relation between truth in $V$ and truth in the surrounding category from the introduction; in particular, that ${\rm Int}(\NN) \cong \omega$). The same applies to Markov's Principle and Church's Thesis (for Church's thesis it is also essential that the model $V$ and the effective topos agree on the meaning of the $T$- and $U$-predicates).

The Presentation Axiom holds, because (internally in \Eff) every small object is covered by a small partitioned assembly (see the Lemma above), and the partitioned assemblies are internally projective in $\Eff$.

The Uniformity Principle, Unzerlegbarkeit and the Independence of Premisses principles are immediate consequences of the fact that $V$ is uniform (of course, Unzerlegbarkeit follows immediately the Uniformity Principle; note that for showing that the principles of {\bf (IP)} and {\bf (IP$_\omega$)} hold, we use classical logic in the metatheory).

To show that $V$ is uniform, we recall from \cite{bergmoerdijk07b} that the initial \spower-algebra is constructed as a quotient of the W-type associated to a representation. In \refprop{reprfordispl}, we have seen that the representation $\rho$ can be chosen to be a morphism between (partitioned) assemblies $(D, \delta) \rTo (T, \tau)$, where $T$ is uniform (every element in $T$ is realized by 0). As the inclusion of $\Asm$ in $\Eff$ preserves W-types, the associated W-type might just as well be computed in the category of assemblies. Therefore it is constructed as in \refprop{indtypesinasm}: for building the W-type associated to a map \func{f}{(B, \beta)}{(A, \alpha)}, one first builds $W(f)$ in \Sets, and defines (by transfinite induction) the realizers of an element ${\rm sup}_a(t)$ to be those natural numbers $n$ coding a pair $\langle n_0, n_1 \rangle$ such that (i) $n_0 \in \alpha(a)$ and (ii) for all $b \in f^{-1}(a)$ and $m \in \beta(b)$, the expression $n_1 (m)$ is defined and a realizer of $tb$. Using this description, one sees that a solution of the recursion equation $f = \langle 0, \lambda n. f \rangle$ realizes every tree. Hence $W(\rho)$, and its quotient $V$, are uniform in $\Eff$.
\end{proof}

We will now show that $V$ is in fact McCarty's model for {\bf IZF}. For this, we will follow a strategy different from the one in \cite{kouwenhovenvanoosten05}: we will simply ``unwind'' the existence proof for $V$ to obtain a concrete description. First, we compute $W = W(\rho)$ in assemblies (see the proof above). Its underlying set consists of well-founded trees, with every edge labelled by a natural number. Moreover, at every node the set of edges into that node should have cardinality less than $\kappa$. One could also describe it as the initial algebra of the functor $X \mapsto {\cal P}_{\kappa}(\NN \times X)$, where ${\cal P}_{\kappa}(Y)$ is the set of all subsets of $Y$ with cardinality less than $\kappa$:
\diag{ {\cal P}_{\kappa}(\NN \times W) \ar@/^/[rr]^{\rm I} & & W \ar@/^/[ll]^{\rm E}. }
Again, the realizers of a well-founded tree $w \in W$ are defined inductively: $n$ is a realizer of $w$, if for every pair $(m, v) \in {\rm E}(w)$, the expression $n(m)$ is defined and a realizer of $v$.

The next step is dividing out, internally in $\Eff$, by bisimulation:
\begin{eqnarray*}
w \sim w' & \Leftrightarrow & (\forall (m, v) \in {\rm E}(w)) \,  (\exists (m', v') \in {\rm E}(w'))[\, v \sim v'] \mbox{ and vice versa.}
\end{eqnarray*}
The internal validity of this statement should be translated in terms of realizers. To make the expression more succinct one could introduce the ``abbreviation'':
\begin{eqnarray*}
n \Vdash w' \epsilon w & \Leftrightarrow & (\exists (m, v) \in {\rm E}(w)) \, [n_0 = m \mbox{ and } n_1 \Vdash w' \sim v],
\end{eqnarray*}
so that it becomes:
\begin{eqnarray*}
n \Vdash w \sim w' & \Leftrightarrow & (\forall (m, v) \in {\rm E}(w))[\, n_0(m) \downarrow \mbox{ and } n_0 (m) \Vdash v \, \epsilon \, w'] \mbox{ and } \\ & & (\forall (m', v') \in {\rm E}(w')) \, [n_1(m') \downarrow \mbox{ and } n_1(m') \Vdash v' \, \epsilon \, w].
\end{eqnarray*}
By appealing to the Recursion Theorem, one can check that we have defined an equivalence relation on $W(\rho)$ in the effective topos (although this is guaranteed by the proof of the existence theorem for $V$). The quotient will be the set-theoretic model $V$. So, its underlying set is $W$ and its equality is given by the formula for $\sim$. Of course, when one unwinds the definition of the internal membership $\epsilon \subseteq V \times V$, one obtains precisely the formula above.
\begin{coro}{validityinVforIZF}
The following clauses recursively define what it means that a certain statement is realized by a natural number $n$ in the model $V$:
\begin{eqnarray*}
n \Vdash w' \epsilon w & \Leftrightarrow & (\exists (m, v) \in {\rm E}(w)) \, [n_0 = m \mbox{ and } n_1 \Vdash w' = v]. \\
n \Vdash w = w' & \Leftrightarrow & (\forall (m, v) \in {\rm E}(w))[\, n_0(m) \downarrow \mbox{ and } n_0(m) \Vdash v \, \epsilon \, w'] \mbox{ and } \\ & & (\forall (m', v') \in {\rm E}(w')) \, [n_1(m') \downarrow \mbox{ and } n_1(m') \Vdash v' \, \epsilon \, w]. \\
n \Vdash \phi \land \psi & \Leftrightarrow & n_0 \Vdash \phi \mbox{ and } n_1 \Vdash \psi. \\
n \Vdash \phi \lor \psi & \Leftrightarrow & n = <0,m> \mbox{ and } m \Vdash \phi \mbox{, or } n = <1, m> \mbox{ and } m \Vdash \psi. \\
n \Vdash \phi \rightarrow \psi & \Leftrightarrow & \mbox{For all } m \Vdash \phi, \mbox{we have } n \cdot m \downarrow \mbox{ and } n \cdot m \Vdash \psi. \\
n \Vdash \neg \phi & \Leftrightarrow & \mbox{There is no } m \mbox{ such that } m \Vdash \phi. \\
n \Vdash \exists x \, \phi(x) & \Leftrightarrow & n \Vdash \phi(a) \mbox{ for some } a \in V. \\
n \Vdash \forall x \, \phi(x) & \Leftrightarrow & n \Vdash \phi(a) \mbox{ for all } a \in V. \\
\end{eqnarray*}
\end{coro}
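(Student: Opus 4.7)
My plan is to establish the stated realizability clauses by induction on the complexity of the set-theoretic formula, showing that they correctly compute the internal truth in $V$ viewed as an object of $\Eff$. The first two clauses (for $\epsilon$ and $=$) have in fact already been derived in the discussion immediately preceding the statement: the construction of $V$ as the bisimulation quotient of $W = W(\rho)$ in $\Eff$, together with the identification of the internal membership $\epsilon$ as the pullback of $\in_V$ along ${\rm Ext}$, translates directly into the stated clauses. So the remaining work is to extend these base cases to the full set of logical connectives and quantifiers.

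For the propositional connectives $\land, \lor, \rightarrow, \neg$, I would invoke the standard tripos-theoretic description of $\Eff$ due to Hyland. Any closed formula $\phi(\vec{a})$ with parameters $\vec{a} \in V$ defines a subobject of $1$ in $\Eff$, and such subobjects correspond (up to the usual equivalence) to sets of realizers. The categorical interpretations of $\land$ (pullback), $\lor$ (cover-image of coproduct), $\rightarrow$ (Heyting implication), and $\neg$ (pseudocomplement) of these subobjects translate precisely into the Kleene clauses listed; this is essentially Hyland's original description of the internal logic of $\Eff$, specialized to predicates on $V$.

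The quantifier clauses are where the uniformity of $V$ plays the decisive role, and this is the step I expect to require the most care. A naive reading of $\forall x \in V \, \phi(x)$ in the realizability tripos would demand a realizer $n$ such that, for every $a \in V$ and every realizer $k$ of $a$, the expression $n(k)$ is defined and realizes $\phi(a)$. By the uniformity of $V$ established in \refprop{princinmodelforizf} --- every element of $V$ is realized by one fixed natural number, coming from a solution of the recursion equation $f = \langle 0, \lambda n.f \rangle$ --- this demand collapses: after precomposing with the constant map at this fixed realizer, it reduces to the simpler condition that $n$ itself realize $\phi(a)$ for every $a \in V$. The dual simplification yields the $\exists$-clause. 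Making this precise amounts to inspecting the left and right adjoints to pullback along $V \to 1$ in the realizability tripos and using uniformity to eliminate the dependence on realizers of the bound variable; but once that reduction is in hand, the stated clauses fall out directly by induction.
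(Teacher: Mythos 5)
Your proposal is correct and follows essentially the same route as the paper: the clauses for $\epsilon$ and $=$ come from the explicit unwinding of $W(\rho)$ and its bisimulation quotient given just before the statement, the propositional clauses from the fact that the internal logic of $\Eff$ is realizability, and the quantifier clauses from the uniformity of $V$ established in \refprop{princinmodelforizf}. The paper's own proof is just a two-sentence version of exactly this argument.
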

\begin{proof}
The internal logic of $\Eff$ is realizability, so the statements for the logical connectives immediately follow. For the quantifiers one uses the uniformity of $V$.
\end{proof}

\noindent
We conclude that the model is isomorphic to that of McCarty \cite{mccarty84} (based on earlier work by Friedman \cite{friedman73}).

\begin{rema}{manyvariations}
There are many variations and extensions of the construction just given, some of which we already alluded to in the introduction. First of all, instead of working with a inaccessible cardinal $\kappa$, we can also work with the category of classes in G\"odel-Bernays set theory, and call a map small if its fibres are sets. (The slight disadvantage of this approach is that one cannot directly refer to the effective topos, but has to build up a version of that for classes first.)

More generally, one can of course start with \emph{any} predicative category
with a class of small maps $(\ct{E}, \smallmap{S})$. If $(\ct{E},\smallmap{S})$ satisfies condition {\bf (F)}, then so will its realizability extension, and by \reftheo{existmodelsetth}, this will produce models of {\bf CZF} rather than {\bf IZF}. For example, if we take for $(\ct{E},\smallmap{S})$ the syntactic category with small maps associated to the the theory {\bf CZF}, then one obtains Rathjen's syntactic version of McCarty's model \cite{rathjen06}.

Alternatively (or, in addition), one can also replace number realizability by realizability for an arbitrary small partial combinatory algebra \pca{A} internal to \ct{E}.  Very basic examples arise in this way, already in the ``trivial'' case where \ct{E} is the topos of sheaves on the Sierpinski space, in which case an internal pca \pca{A} can be identified with a suitable map between pca's. The well-known Kleene-Vesley realizability \cite{kleenevesley65} is in fact a special case of this construction.  More generally, one can start with a predicative category with small maps $(\ct{E},\smallmap{S})$ and intertwine the construction of \reftheo{effascatwsmallmaps} with a similar result for sheaves, announced in \cite{bergmoerdijk07a} and discussed in detail in Part III of this series \cite{bergmoerdijk07d}:
\begin{theo}{sheavesresult} {\rm \cite{bergmoerdijk07a}}
Let $(\ct{E},\smallmap{S})$ be a predicative category with small
maps satisfying {\bf ($\Pi$S)}, and \ct{C} a small site with a basis in \ct{E}. Then the category of sheaves $\shct{\ct{E}}{\ct{C}}$ carries a natural class of maps $\smallmap{S}_{\ct{E}}[\ct{C}]$, such that the pair $(\shct{\ct{E}}{\ct{C}}, \smallmap{S}_\ct{E}[\ct{C}])$ is again a predicative category with small maps satisfying {\bf ($\Pi$S)}. Moreover, this latter pair satisfies {\bf (M)}, {\bf (F)} or {\bf (PS)}, respectively, whenever the pair $(\ct{E}, \smallmap{S})$ does.
\end{theo}
Thus, if \ct{C} is a small site in \ct{E}, and \pca{A}  is a sheaf of pca's on \ct{C}, one obtains a predicative category with small maps $(\ct{E}',\smallmap{S}') = (\Eff_{\shct{\ct{E}}{\ct{C}}}[{\cal A}], \smallmap{S}_{\shct{\ct{E}}{\ct{C}}}[{\cal A}])$, as in the case of Kleene-Vesley realizability \cite{birkedalvanoosten02}.

Any open (resp.~closed) subtopos defined by a small site in $(\ct{E}', \smallmap{S}')$ will now define another such pair $(\ct{E}'',\smallmap{S}'')$, and hence a model of {\bf IZF} or {\bf CZF} if the conditions of \reftheo{existmodelsetth} are met by the original pair $(\ct{E},\smallmap{S})$. One might refer to its semantics as ``relative realizability'' (resp.~``modified relative realizability''). It has been shown by \cite{birkedalvanoosten02} that
relative realizability \cite{awodeybirkedalscott02, streicher97} and modified realizability \cite{vanoosten97} are special cases of this, where $\shct{\ct{E}}{\ct{C}}$ is again sheaves on Sierpinski space (see also \cite{vanoosten08}).
\end{rema}

\section{A model of CZF in which all sets are subcountable}

In this section we will show that {\bf CZF} is consistent with the principle saying that all sets are subcountable (this was first shown by Streicher in \cite{streicher05}; the account that now follows is based on the work of the first author in \cite{berg06}). For this purpose, we consider again the effective topos $\Eff$ relative to the classical metatheory $\Sets$. We will show it carries another class of small maps.
\begin{lemm}{subcmaps}
The following are equivalent for a morphism \func{f}{B}{A} in $\Eff$.
\begin{enumerate}
\item In the internal logic of $\Eff$ it is true that all fibres of $f$ are  quotients of subobjects of $\NN$ (i.e., subcountable).
\item In the internal logic of $\Eff$ it is true that all fibres of $f$ are quotients of $\neg\neg$-closed subobjects of $\NN$.
\item The morphism $f$ fits into a diagram of the following shape
\diag{X \times \NN \ar[dr] & Y \ar@{ >->}[l] \ar@{->>}[r] \ar[d]^g & B \ar[d]^f \\
& X \ar@{->>}[r] & A, }
where the square is covering and $Y$ is a $\neg\neg$-closed subobject of $X \times \NN$.
\end{enumerate}
\end{lemm}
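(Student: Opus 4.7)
\medskip

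\noindent
\textbf{Proof plan.} The plan is to show the cycle $(2)\Rightarrow(1)\Rightarrow(2)$ together with $(2)\Leftrightarrow(3)$. The implication $(2)\Rightarrow(1)$ is trivial, since any $\neg\neg$-closed subobject of $\NN$ is in particular a subobject. For $(1)\Rightarrow(2)$, the key point I would use is that inside $\Eff$ every subobject $m\colon S\hookrightarrow\NN$ is itself the target of a cover coming from a $\neg\neg$-closed subobject of $\NN$: concretely, the set $S^{*}$ of natural numbers $n$ such that $n$ realizes membership in $S$ is a $\neg\neg$-closed subobject of $\NN$ (its characteristic predicate is of the form $n\in S$ with only bounded, realizer-level data, so decidable in the metatheory), and sending a realizer $n$ to the (unique) element $m(s)$ of $S$ it realizes yields a cover $S^{*}\twoheadrightarrow S$. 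Composing this cover with the given quotient $S\twoheadrightarrow B_a$ gives each fibre of $f$ as a quotient of a $\neg\neg$-closed subobject of $\NN$, internally in $\Eff$.

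For $(3)\Rightarrow(2)$, I would work fibrewise: pulling the covering square along a global element $a\colon 1\to A$ (working internally) shows that the fibre $B_a$ is covered by the fibre $Y_{x}$ of $g$ over any $x$ lying above $a$, and since $Y\hookrightarrow X\times\NN$ is $\neg\neg$-closed, the fibre $Y_{x}\hookrightarrow\NN$ is $\neg\neg$-closed as well; so $B_a$ is the quotient of a $\neg\neg$-closed subobject of $\NN$.

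The main work is the direction $(2)\Rightarrow(3)$, where we must externalise the pointwise statement to a single diagram. Here the strategy is to unwind the internal assertion, which reads: for every $a\in A$ there exists a $\neg\neg$-closed $S\subseteq\NN$ and a surjection $S\twoheadrightarrow B_a$. Applying the collection/cover-lifting principle available in $\Eff$ (which holds since $\Eff$ arises as the exact completion of $\Asm_{\Sets}$, and covers lift along any object), we obtain a cover $X\twoheadrightarrow A$ together with an $X$-indexed family realising the existential uniformly: a subobject $Y\subseteq X\times\NN$ whose projection to $X$ I would call $g$, such that $Y$ is fibrewise $\neg\neg$-closed in $\NN$, and a map from $Y$ to the pullback $X\times_A B$ which is a cover. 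Fibrewise $\neg\neg$-closedness of $Y$ in $X\times\NN$ is then equivalent, because the $\neg\neg$-topology is local, to $Y$ being globally $\neg\neg$-closed in $X\times\NN$. Composing the cover $Y\twoheadrightarrow X\times_A B$ with the projection to $B$ gives the covering square required in $(3)$.

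The main obstacle I anticipate is verifying the fibrewise/parameterized translation carefully: ensuring that the uniform choice of a $\neg\neg$-closed subobject over $X$ really produces a $\neg\neg$-closed subobject of $X\times\NN$ (and not merely something whose fibres are $\neg\neg$-closed in an unstable way), and checking that in $(1)\Rightarrow(2)$ the canonical cover $S^{*}\twoheadrightarrow S$ is indeed a regular epi in $\Eff$ and not just in $\Asm$. Both points reduce to standard facts about the interaction of the $\neg\neg$-topology with $\NN$ in realizability toposes, but they are the steps where one has to be genuinely careful with realizers rather than formal manipulation.
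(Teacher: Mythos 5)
Your proposal is correct and follows essentially the same route as the paper: $(2)\Rightarrow(1)$ is trivial, $(1)\Rightarrow(2)$ rests on the internal validity in $\Eff$ of Shanin's Principle (every subobject of $\NN$ is covered by a $\neg\neg$-closed one, realized by the assembly of realizers of membership, done with parameters over a covering assembly $X$), and $(2)\Leftrightarrow(3)$ is the internal/external translation of the same statement. The paper compresses the latter equivalence into one sentence and carries out the $\neg\neg$-closed cover directly in parameterized form over $X$, which is exactly the care you flag as the main obstacle.
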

\begin{proof} Items 2 and 3 express the same thing, once in the internal logic and once in diagrammatic language. That 2 implies 1 is trivial.

$1 \Rightarrow 2$: This is an application of the internal validity in $\Eff$ of Shanin's Principle \cite[Proposition 1.7]{vanoosten94}: every subobject of $\NN$ is covered by a $\neg\neg$-closed one. For let $Y$ be a subobject of $X \times \NN$ in $\Eff / X$. Since every object in the effective topos is covered by an assembly, we may just as well assume that $X$ is an assembly $(X, \chi)$. The subobject $Y \subseteq X \times \NN$ can be identified with a function \func{Y}{X \times \NN}{{\cal P} \NN} for which there exists a natural number $r$ with the property that for every $m \in Y(x, n)$, the value $r(m)$ is defined and codes a pair $\langle k_0, k_1 \rangle$ with $k_0 \in \chi(x)$ and $k_1 = n$. One can then form the assembly $(P, \pi)$ with
\begin{eqnarray*}
P & = & \{ \, (x,n) \in X \times \NN \, : \, n \mbox{ codes a pair } \langle n_0, n_1 \rangle \mbox{ with } n_1 \in Y(x, n_0) \, \}, \\
\pi(x, n) & = & \{ \langle k_0, k_1 \rangle \, : \, k_0 \in \chi(x) \mbox{ and } k_1 = n \},
\end{eqnarray*}
which is actually a $\neg\neg$-closed subobject of $X \times \NN$. $P$ covers $Y$, clearly. The diagram
\diag{ P \ar@{->>}[rr] \ar@{ >->}[rd] & & Z \ar@{ >->}[dl] \\
       & X \times \NN & }
does not commute, but composing with the projection $X \times \NN \rTo X$ it does.
\end{proof}

Let \smallmap{T} be the class of maps having any of the equivalent properties in this lemma.

\begin{rema}{nomenclatureforsubc}
The morphisms belonging to \smallmap{T} were called ``quasi-modest'' in \cite{joyalmoerdijk95} and ``discrete'' in \cite{hylandrobinsonrosolini90}. In the latter the authors prove another characterisation of \smallmap{T} due to Freyd: the morphisms belonging to \smallmap{T} are those fibrewise orthogonal to the subobject classifier $\Omega$ in $\Eff$ (Theorem 6.8 in \emph{loc.cit.}).
\end{rema}

\begin{prop}{quasimodaresmallmaps} {\rm \cite[Proposition 5.4]{joyalmoerdijk95}}
The class \smallmap{T} is a representable class of small maps in $\Eff$ satisfying {\bf (M)} and {\bf (NS)}.
\end{prop}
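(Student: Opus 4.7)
The plan is to verify the axioms listed in Appendix B that define a representable class of small maps for $\smallmap{T}$---namely (A1), (A3)--(A5), (A7)--(A10), (NE), (NS) and representability---together with (M). Throughout I work with characterization (3) of \reflemm{subcmaps}, which presents a member of $\smallmap{T}$ concretely as being covered by a $\neg\neg$-closed subobject $Y \hookrightarrow X \times \NN$ over its base. This diagrammatic presentation converts most axiom-checks into standard pullback manipulations combined with facts about $\neg\neg$-closed subobjects.

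I would first dispatch the easy axioms. For (M), a mono has subterminal fibres, which are trivially quotients of the $\neg\neg$-closed subobjects $\emptyset, \{0\} \subseteq \NN$; the diagrammatic version is equally direct. For (NS), the map $\NN \to 1$ is witnessed by the entire $\neg\neg$-closed subobject $\NN \hookrightarrow 1 \times \NN$. Pullback stability (A1) follows by pasting pullbacks in the defining diagram of (3) and noting that $\neg\neg$-closed subobjects are stable under pullback. Closure under composition (A5) uses the recursive pairing $\NN \times \NN \cong \NN$ to concatenate a $\neg\neg$-closed subobject $Y_1 \hookrightarrow X \times \NN$ with a $\neg\neg$-closed subobject $Y_2 \hookrightarrow Y_1 \times \NN$ into a single $\neg\neg$-closed subobject in $X \times \NN$. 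Axioms (A3), (A4), (A9), (NE) and (A10) reduce to analogous diagrammatic checks, the last of these via (M).

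The substantive axioms are collection (A7), universal quantification (A8), and representability. For (A7), a cover of a typical fibre of $f \in \smallmap{T}$ can be lifted through the covering square provided by (3) to a cover of the $\neg\neg$-closed subobject $Y$, reducing the problem to collection in the ambient predicative setting (i.e.\ for the class of small maps in $\Sets$). For (A8) one exploits that $\neg\neg$-closed subobjects of $\NN$ behave classically, so universal quantification along $f$ externalises to ordinary quantification over a subset of $\NN$. For representability, I take $\pi : E \to U$ where $U$ is the object classifying $\neg\neg$-closed subobjects of $\NN$ in $\Eff$ and $E \hookrightarrow U \times \NN$ is the generic such subobject; $\pi$ itself lies in $\smallmap{T}$ by construction, and any $f \in \smallmap{T}$ is covered by a pullback of $\pi$ via the classifying map $X \to U$ attached to the $Y \hookrightarrow X \times \NN$ supplied by (3).

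\textbf{Main obstacle.} The most delicate step is the uniform application of Shanin's principle in parameters required to move between the three equivalent descriptions of $\smallmap{T}$ in \reflemm{subcmaps} inside the arguments for (A7) and representability, where one must ensure that the resulting squares are covering squares in the precise sense used in Appendix B rather than merely internally-valid surjections. Once this bookkeeping is handled, the remaining verifications amount to routine diagram chases of the same flavour as in Section 2.
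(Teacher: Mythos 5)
Your overall route coincides with the paper's: both treat $\smallmap{T}$ as the covered closure of the auxiliary class $\smallmap{D}$ of maps $Y \to X$ with $Y$ a $\neg\neg$-closed subobject of $X \times \NN$, verify the display-map axioms for $\smallmap{D}$ (using $\NN \times \NN \cong \NN$ for composition, and with {\bf (A8)}--{\bf (A10)} essentially absorbed by {\bf (M)}), take the generic $\neg\neg$-closed subobject $\in_\NN \to {\cal P}_{\neg\neg}(\NN)$ as the representation, and then invoke the general passage from a representable class of display maps to a representable class of small maps.

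However, your treatment of collection {\bf (A7)} has a genuine gap. You propose to reduce it to ``collection in the ambient predicative setting (i.e.\ for the class of small maps in $\Sets$)''. Collection for a larger class of maps does not yield collection for $\smallmap{T}$: the axiom requires the left-hand vertical map of the resulting covering square to belong to $\smallmap{T}$ itself, i.e.\ to have subcountable fibres, whereas applying collection for the $\kappa$-small maps (or any ambient class) only produces a $\kappa$-small collecting map, with no control on subcountability. That subcountability is precisely what is at issue and is not supplied by your reduction. The paper's argument is different and essential here: every $\func{g}{Y}{X}$ in $\smallmap{D}$ is a \emph{choice map} (internally projective over $X$), because after pulling back along a partitioned assembly $X'$ covering $X$, the object $Y \times_X X'$ is a $\neg\neg$-closed subobject of $X' \times \NN$ and hence itself a partitioned assembly, and partitioned assemblies are projective in $\Eff$. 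For internally projective maps {\bf (A7)} is automatic: any cover of $Y$ splits locally on the base, so the covering square can be filled in with $g$ itself. Without this projectivity argument (or some substitute guaranteeing that the collecting map is again subcountable), your verification of {\bf (A7)} does not go through.
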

\begin{proof}
To show that \smallmap{T} is a class of small maps, it is convenient to regard $\smallmap{T}$ as $\smallmap{D}^{\rm cov}$ (the class of maps covered by elements of \smallmap{D}), where $\smallmap{D}$ consists of those maps \func{g}{Y}{X} for which $Y$ is a $\neg\neg$-closed subobject of $X \times \NN$.
It is clear that \smallmap{D} satisfies axioms {\bf (A1, A3-5)} for a class of display maps, and {\bf (NS)} as well (for {\bf (A5)}, one uses that there is an isomorphism $\NN \times \NN \cong \NN$ in $\Eff$). It also satisfies axiom {\bf (A7)}, because all maps \func{g}{Y}{X} in \smallmap{D} are choice maps, i.e., internally projective as elements of $\Eff / X$. The reason is that in $\Eff$ the partitioned assemblies are projective, and every object is covered by a partitioned assembly. So if $X'$ is some partitioned assembly covering $X$, then also $X' \times \NN$ is a partitioned assembly, since $\NN$ is a partitioned assembly and partitioned assemblies are closed under products. Moreover, $Y \times_X X'$ as a $\neg\neg$-closed subobject of $X' \times \NN$ is also a partitioned assembly. From this it follows that $g$ is internally projective. A representation $\pi$ for \smallmap{D} is obtained via the pullback
\diag{ \in_{\NN} \ar@{ >->}[r] \ar[d]_{\pi} & \in_{\NN} \ar[d] \\
{\cal P}_{\neg\neg}(\NN) \ar@{ >->}[r] & {\cal P}(\NN). }
Furthermore, it is obvious that all monomorphisms belong to \smallmap{T}, since all the fibres of a monic map are subcountable (internally in $\Eff$).

Now it follows that \smallmap{T} is a representable class of small maps satisfying {\bf (M)} and {\bf (NS)} (along the lines of Proposition 2.14 in \cite{bergmoerdijk07b}).
\end{proof}

\begin{prop}{furtherpropquasimodmaps} {\rm \cite{berg06}}
The class \smallmap{T} satisfies {\bf (WS)} and {\bf (F)}.
\end{prop}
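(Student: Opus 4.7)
The plan is to derive both axioms for $\smallmap{T}$ by bootstrapping from their counterparts for the larger class $\smallmap{S}_{\Eff}$ of $\kappa$-small maps (which satisfies {\bf (M)}, {\bf (PS)}, {\bf (F)} and {\bf (WS)} by \reftheo{effascatwsmallmaps} applied to $(\Sets,\smallmap{S})$), and then using the coding power of $\NN$ in $\Eff$ to show that the relevant constructions remain within the subcountable world.

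For {\bf (WS)}, given a map \func{f}{B}{A} in $\smallmap{T}$, the W-type $W_f$ already exists in $\Eff$, since $\smallmap{T} \subseteq \smallmap{S}_{\Eff}$ and the larger class satisfies {\bf (WE)}. I would show that $W_f$ is subcountable by constructing a surjection $\phi \colon N \twoheadrightarrow W_f$ from a $\neg\neg$-closed subobject $N \subseteq \NN$ defined by a simultaneous recursion: a natural number $n = \langle n_0, n_1 \rangle$ lies in $N$ iff $n_0$ codes some element $a \in A$ under a chosen subcountable presentation of $A$, and $n_1$ codes a partial recursive function which, applied to any code $b^*$ for an element $b \in B_a$, yields again an element of $N$. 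One then sets $\phi(\langle n_0, n_1 \rangle) = \sup_a (b \mapsto \phi(n_1(b^*)))$ and resolves the mutual recursion by the Kleene Recursion Theorem, establishing surjectivity of $\phi$ by induction on the well-founded rank of trees in $W_f$, using that each $B_a$ is itself subcountable.

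For {\bf (F)}, given composable display maps \func{g}{B}{A} and \func{f}{A}{X} in $\smallmap{T}$, I would mirror the strategy of \refprop{fullinasm}. First, apply {\bf (F)} for $\smallmap{S}_{\Eff}$ to obtain a generic displayed mvs $P \hookrightarrow Y \times_X B$ over some cover $Y \to X$ at the level of the larger class. Then refine $Y$ by passing to the $\neg\neg$-closed subobject $\tilde{Y} \subseteq Y \times \NN$ consisting of pairs $(y, n)$ for which $n$ is a realizer witnessing that the fiber $P_y \to A_{s(y)}$ is a subcountable cover. By construction $\tilde{Y} \to X$ lies in $\smallmap{T}$, and $\tilde{P} := \tilde{Y} \times_Y P$ provides the required generic mvs for $g$ in $\smallmap{T}$. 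Universality is verified by a tracking argument: any mvs of $g$ in $\smallmap{T}$ over some $Z \to X$ factors through the generic mvs from $\smallmap{S}_{\Eff}$ by its universal property, and augmenting this factorisation with an explicit realizer (of the subcountability of the fibers) promotes it to a factorisation through $\tilde{P}$.

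The main obstacle will be the simultaneous fixed-point construction in {\bf (WS)}: one must verify internally in $\Eff$ that the map $\phi$ is tracked and that its underlying set-map in $\Sets$ actually covers every tree in $W_f$, so that the realizers produced by the coding are compatible with those assigned to $W_f$ by \refprop{indtypesinasm}. The cleanest way is probably to define $N$ and $\phi$ first in $\Sets$ via a transfinite recursion on tree rank, and then check in the internal logic of $\Eff$ that the resulting data satisfy the required realizability conditions.
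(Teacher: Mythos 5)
Your treatment of {\bf (WS)} is essentially the paper's argument made explicit: in both cases the key point is that a well-founded tree is determined by its decoration, so the W-type is covered by a subobject of $\NN$. (The paper first reduces, via covering squares and descent, to a display map in $\smallmap{D}$ between $\neg\neg$-closed subobjects of $\NN$, where each $b \in B_a$ has a \emph{unique} code; in your direct version you must additionally require, in the definition of $N$, that $n_1$ act extensionally on the various codes $b^*$ of a given $b$, since otherwise $b \mapsto \phi(n_1(b^*))$ is not single-valued and $\phi$ is not well defined. This is fixable, but it is precisely the wrinkle the paper's reduction is designed to avoid.)

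The argument for {\bf (F)}, however, has a genuine gap. The axiom requires the generic \emph{mvs} to be indexed by a map $\tilde{Y} \to X'$ that itself belongs to $\smallmap{T}$, i.e.\ has subcountable fibres. You build $\tilde{Y}$ as a subobject of $Y \times \NN$, where $Y$ indexes the generic \emph{mvs} supplied by {\bf (F)} for the class $\smallmap{S}$ of $\kappa$-small maps in $\Eff$. But the fibres of $Y \to X'$ may have cardinality up to $\kappa$ (the generic family must index essentially all $\kappa$-small \emph{mvs}s), and adjoining a realizer does not cut this down: the number $n$ only witnesses that $P_y \to A$ is covered, it does not determine $y$, so $\tilde{Y}_{x'} \to \NN$ is far from monic and $\tilde{Y} \to X'$ has no reason to be subcountable. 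The paper does not bootstrap from $\smallmap{S}$ at all. Instead it uses that maps in $\smallmap{D}$ are choice maps (their domains are, up to covers, partitioned assemblies, hence internally projective), so that every \emph{mvs} of $g$ locally contains the image of a section of $g$; the object of \emph{sections} of $g$ over $X$ is therefore already generic, and it is subcountable because a function between $\neg\neg$-closed subobjects of $\NN$ is determined by its realizer (the exponential observation opening the paper's proof). To repair your strategy you would need some such replacement for $\tilde{Y}$: an index object whose points are determined by realizers alone.
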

\begin{proof} (Sketch.)
We first observe that for any two morphisms \func{f}{Y}{X} and \func{g}{Z}{X} belonging to \smallmap{D}, the exponential $(f^g)_X \rTo X$ belongs to \smallmap{T}. Without loss of generality we may assume $X$ is a (partitioned) assembly. If $Y \subseteq X \times \NN$ and $Z \subseteq X \times \NN$ are $\neg\neg$-closed subobjects, then every function \func{h}{Y_x}{Z_x} over some fixed $x \in X$ is determined uniquely by its realizer, and so all fibres of $(f^g)_X \rTo X$ are subcountable.

To show the validity of {\bf (F)}, it suffices to show the existence of a generic \smallmap{T}-displayed \emph{mvs}s for maps \func{g}{B}{A} in \smallmap{D}, with \func{f}{A}{X} also in \smallmap{D} (in view of Lemmas 2.15 and 6.23 from \cite{bergmoerdijk07b}). Because $f$ is a choice map, one can take the object of all sections of $g$ over $X$, which is subcountable by the preceding remark.

The argument for the validity of {\bf (WS)} is similar. We use again that every composable pair of maps \func{g}{B}{A} and \func{f}{A}{X} belonging to \smallmap{T} fit into covering squares of the form
\diag{B' \ar[d]_{g'} \ar@{->>}[r] & B \ar[d]^g \\
A' \ar[r] \ar[d]_{f'} & A \ar[d]^f \\
X' \ar@{->>}[r]_p & X,}
with $g'$ and $f'$ belonging to \smallmap{D}. We may also assume that $X'$ is a (partitioned) assembly. The W-type associated to $g'$ in $\Eff / X'$ is subcountable, because every element of $W(g')_{X'}$ in the slice over some fixed $x \in X'$ is uniquely determined by its realizer. The W-type associated to $p^*g$ in the slice over $X'$ is then a subquotient of $W(g')_{X'}$ (see the proof of Proposition 6.16 in \cite{bergmoerdijk07b}), and therefore also subcountable. Finally, the W-type associated to $g$ in the slice over $X$ is also subcountable, by descent for \smallmap{T}.
\end{proof}

We will obtain a model of ${\bf CZF}$ and Full separation by considering the initial algebra $U$ for the power class functor associated to \smallmap{T}, which we will denote by ${\cal P}_t$. 
\diag{ {\cal P}_t U \ar@/^/[rr]^{\rm Int} & & U \ar@/^/[ll]^{\rm Ext}. }
In the proposition below, we show that it is not a model of {\bf IZF}, for it refutes the power set axiom.

\begin{prop}{nopowinV}
The statement that all sets are subcountable is valid in the model $U$. Therefore it refutes the power set axiom.
\end{prop}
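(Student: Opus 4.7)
The plan is to exploit the correspondence between the internal set theory of $U$ and the external categorical logic of $\Eff$ explained after \reftheo{existmodelsetth}. Any generalized element $a \colon X \to U$ corresponds via Ext to a $\smallmap{T}$-small subobject $\mathrm{Ext}(a) \subseteq X \times U$, and by the very definition of $\smallmap{T}$ such a subobject is subcountable in the internal logic of $\Eff$. Moreover, by property (iii) of the introduction, $\mathrm{Ext}(\omega) \cong \NN$, so $\smallmap{T}$-small subobjects of $\NN$ correspond (via the inverse of Ext) to generalized elements $b \colon X \to U$ satisfying ``$b \subseteq \omega$'' internally in $U$.

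To establish the subcountability of every set in $U$, I would fix a generalized element $a \colon X \to U$ and aim to produce, after pulling back along a cover of $X$, some $b \colon X' \to U$ which is a subset of $\omega$ in $U$, together with a surjection $f \colon b \twoheadrightarrow a$ in $U$. Applying \reflemm{subcmaps} to the $\smallmap{T}$-small map $\mathrm{Ext}(a) \to X$ yields a cover $X' \twoheadrightarrow X$, a $\neg\neg$-closed subobject $Y \subseteq X' \times \NN$, and a cover $Y \twoheadrightarrow \mathrm{Ext}(a) \times_X X'$ over $X'$. Viewing $Y$ as a $\smallmap{T}$-small subobject of $X' \times \mathrm{Ext}(\omega)$ gives the desired $b \colon X' \to U$ with $b \subseteq \omega$ in $U$; the cover $Y \twoheadrightarrow \mathrm{Ext}(a) \times_X X'$ then presents a functional, total and surjective relation from $b$ to $a$ in the internal logic of $\Eff$, which via the Ext-correspondence yields an internal surjection $f \colon b \twoheadrightarrow a$ in $U$. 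Passage to the cover $X'$ is harmless, since the sentence being verified is universally quantified.

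For the ``therefore'' I would run Cantor's diagonal argument inside {\bf CZF}: assuming the power set axiom, $\mathcal{P}(\omega)$ is a set, hence by the first half subcountable, so some $S \subseteq \omega$ surjects onto $\mathcal{P}(\omega)$ via a function $f$. The set $D = \{ n \in S : n \notin f(n) \}$ exists by bounded separation and belongs to $\mathcal{P}(\omega)$, so $f(n_0) = D$ for some $n_0 \in S$; this yields the contradiction $n_0 \in D \Leftrightarrow n_0 \notin D$.

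The main obstacle I anticipate is the translation step where the external cover $Y \twoheadrightarrow \mathrm{Ext}(a) \times_X X'$ must be converted into an internal set-theoretic surjection $f \colon b \to a$ in $U$. One has to check that the functional relation extracted from the cover is indeed the Ext-datum of a functional, total, surjective element of $U$ with domain precisely $b$; this rests on the good behaviour of Ext with respect to inclusions and function spaces recorded in properties (i), (ii) and (iv) of the introduction, together with the fact that $\smallmap{T}$-smallness of the relevant subobjects (which is what allows them to be named by elements of $U$) is inherited from $Y$.
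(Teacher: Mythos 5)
Your proposal is correct and follows essentially the same route as the paper: the subcountability of all sets in $U$ is read off from the fact that all fibres of maps in $\smallmap{T}$ are internally subcountable (via the Ext-correspondence from the introduction), and the power set axiom is then refuted by Cantor's diagonal argument applied to $\mathcal{P}(\omega)$. The paper merely compresses the first half into an appeal to the introduction and adds the further (unneeded for the stated claim) observation that even $\mathcal{P}(1)$ fails to exist, since by Subset Collection its existence would give $(\mathcal{P}1)^{\omega}$ and hence $\mathcal{P}(\omega)$.
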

\begin{proof}
As we explained in the introduction, the statement that all sets are subcountable follows from the fact that, in the internal logic of the effective topos, all fibres of maps belonging to \smallmap{T} are subcountable. But the principle that all sets are subcountable immediately implies the non-existence of ${\cal P}\omega$, using Cantor's Diagonal Argument. And neither does ${\cal P} 1$ when $1 = \{ \emptyset \}$ is a set consisting of only one element. For if it would, so would $({\cal P}1)^{\omega}$, by Subset Collection. But it is not hard to see that $({\cal P} 1)^{\omega}$ can be reworked into the powerset of $\omega$.
\end{proof}

\begin{prop}{principlesinV}
The choice principles {\bf (CC), (RDC), PA)} are valid in the model $U$. Moreover, as an object of the effective topos, $U$ is uniform, and therefore the principles {\bf (UP)}, {\bf (UZ)}, {\bf (IP)} and {\bf (IP$_\omega$)} hold in $U$ as well.
\end{prop}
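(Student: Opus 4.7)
The plan is to follow \refprop{princinmodelforizf} mutatis mutandis. For (CC) and (RDC), it suffices to use that these principles hold in the effective topos $\Eff$ and that the usual translation between the set-theoretic internal logic of $U$ and the categorical logic of $\Eff$ preserves them; as in the case of $V$, this works because ${\rm Int}(\omega)$ is still isomorphic to $\NN$ and because $U$ inherits the relevant arithmetical closure structure from $\Eff$. For (PA), my plan is to show that every $\smallmap{T}$-small object is internally covered by a $\smallmap{T}$-small projective: by \reflemm{subcmaps} every $\smallmap{T}$-small map is covered by one in $\smallmap{D}$, and the proof of \refprop{quasimodaresmallmaps} already observes that the domain of such a cover, being a $\neg\neg$-closed subobject of a product of partitioned assemblies, is itself a partitioned assembly and hence internally projective in $\Eff$. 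This is exactly the ``small partitioned assembly covers every small object'' input that drove the argument for $V$.

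Next, for the uniformity of $U$ as an object of $\Eff$, I would mimic the second half of \refprop{princinmodelforizf}. By \reftheo{existmodelsetth} and the general theory, the initial ${\cal P}_t$-algebra $U$ is built by quotienting the W-type $W(\rho')$ associated to a representation $\rho'$ of $\smallmap{T}$. The representation $\rho'$ can be chosen, along the lines of \refprop{reprfordispl}, as a map between partitioned assemblies whose codomain $(T',\tau')$ is \emph{uniform}, i.e.\ satisfies $\tau'(t)=0$ for every $t \in T'$. Since the inclusion $\Asm \hookrightarrow \Eff$ preserves W-types, $W(\rho')$ may be computed in assemblies following \refprop{indtypesinasm}, and then a fixed point (provided by the Recursion Theorem) of the equation $f=\langle 0,\lambda n.f\rangle$ realizes every tree by well-founded induction. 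Uniformity of $W(\rho')$, and hence of its quotient $U$, follows.

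Once $U$ is known to be uniform, the principles (UP), (UZ), (IP) and (IP$_\omega$) are immediate consequences, just as in the last paragraph of the proof of \refprop{princinmodelforizf} (with classical logic in the metatheory needed for (IP) and (IP$_\omega$), and (UZ) being a straightforward special case of (UP)). The main obstacle I anticipate is verifying that the representation of $\smallmap{T}$ really can be arranged to have a uniform codomain while still being a representation: the $\smallmap{D}$-representation $\pi$ constructed in \refprop{quasimodaresmallmaps} via ${\cal P}_{\neg\neg}(\NN)$ does not have this property out of the box, so one must rebuild it in the style of \refprop{reprfordispl}, adjoining the data of a partial realizer function and declaring $0$ to realize each such datum, and then check both that every $\smallmap{T}$-small map is still covered by a pullback of this representation and that the quotient of the resulting W-type still yields the initial ${\cal P}_t$-algebra.
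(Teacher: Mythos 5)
Your proposal follows the paper's proof essentially step for step: (CC) and (RDC) are inherited from $\Eff$ via the correspondence between the internal logic of $U$ and that of the ambient topos; (PA) comes from the fact that every $\smallmap{T}$-small object is covered by a $\neg\neg$-closed subobject of (a product with) $\NN$, which is a partitioned assembly and hence internally projective; and uniformity of $U$ is reduced to exhibiting a representation of $\smallmap{T}$ with uniform codomain, after which the fixed-point realizer $f = \langle 0, \lambda n.f\rangle$ decorates every tree of the associated W-type, computed in assemblies, exactly as in \refprop{princinmodelforizf}.

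The one point to correct is your anticipated ``main obstacle'': it does not exist. The representation $\func{\pi}{\in_{\NN}}{{\cal P}_{\neg\neg}(\NN)}$ from \refprop{quasimodaresmallmaps} already has uniform codomain out of the box, because in $\Eff$ the object of $\neg\neg$-closed subobjects of $\NN$ is ${\cal P}_{\neg\neg}(\NN) \cong \nabla{\cal P}(\mathbb{N})$: a $\neg\neg$-closed subobject is determined by its underlying set of numbers and carries no realizability data, so every element may be taken to be realized by $0$ (while $\in_{\NN}$ is the partitioned assembly in which $(n,A)$ is realized by $n$). Hence $\pi$ is already a morphism of assemblies with uniform codomain, and no rebuilding of the representation in the style of \refprop{reprfordispl} is needed; the extra construction you sketch (adjoining a partial realizer function realized by $0$) is superfluous and would only force you to re-verify that the result is still a representation. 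With this simplification your argument coincides with the paper's.
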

\begin{proof} The proof is very similar to that of \refprop{princinmodelforizf}.

The Axioms of Countable and Relativised Dependent Choice $U$ inherits from the effective topos $\Eff$. To see that in $U$ every set is the surjective image of a projective set, notice that every set is the surjective image of a $\neg\neg$-closed subset of $\omega$, and these are internally projective in $\Eff$.

To show that $U$ is uniform it will suffice to point out that the representation can be chosen to be of a morphism of assemblies with uniform codomain. Then the argument will proceed as in \refprop{princinmodelforizf}. In the present case, the representation $\pi$ can be chosen to be of the form
\diag{ \in_{N} \ar@{ >->}[r] \ar[d]_{\pi} & \in_{N} \ar[d] \\
{\cal P}_{\neg\neg}(N) \ar@{ >->}[r] & {\cal P}(N). }
So therefore $\pi$ is a morphism between assemblies, where ${\cal P}_{\neg\neg}(N) = \nabla {\cal P}\mathbb{N}$, i.e. the set of all subsets $A$ of the natural numbers, with $A$ being realized by 0, say, and $\in_{N} = \{ (n, A) \, : \, n \in A \}$, with $(n, A)$ being realized by $n$. So $\pi$ is indeed of the desired form, and $U$ will be uniform. Therefore it validates the principles {\bf (UP)}, {\bf (UZ)}, {\bf (IP)} and {\bf (IP$_\omega$)}. 
\end{proof}

\begin{rema}{regextaxiom}
It follows from results in \cite{moerdijkpalmgren02} that the Regular Extension Axiom from \cite{aczelrathjen01} also holds in $U$. For in \cite{moerdijkpalmgren02}, the authors prove that the validity of the Regular Extension Axiom in $V$ follows from the axioms {\bf (WS)} and {\bf (AMC)} for \smallmap{T}. {\bf (AMC)} is the Axiom of Multiple Choice (see \cite{moerdijkpalmgren02}), which holds here because every $f \in \smallmap{T}$ fits into a covering square
\diag{Y \ar@{->>}[r] \ar[d]_g & B \ar[d]^f \\ X \ar@{->>}[r] & A,}
where \func{g}{Y}{X} is a small choice map, hence a small collection map over $X$.
\end{rema}

The model $U$ has appeared in different forms in the literature, its first appearance being in Friedman's paper \cite{friedman77}. We discuss several of its incarnations.

We have seen above that for any strongly inaccessible cardinal $\kappa \gt \omega$, the effective topos carries another class of small maps \smallmap{S}. For this class of small maps, the initial $\spower$-algebra $V$ is precisely McCarty's realizability model for {\bf IZF}. It is not hard to see that $\smallmap{T} \subseteq \smallmap{S}$, and therefore there exists a pointwise monic natural transformation ${\cal P}_{t} \Rightarrow \spower$. This implies that our present model $U$ embeds into McCarty's model.
\diag{ {\cal P}_t U \ar[r] \ar[dd]_{\rm Int} & {\cal P}_t V \ar@{ >->}[d] \\
& \spower V \ar[d]^{\rm Int} \\
U \ar@{ >->}[r] & V}

Actually, $U$ consists of those $x \in V$ that $V$ believes to be hereditarily subcountable (intuitively speaking, because $V$ and $\Eff$ agree on the meaning of the word ``subcountable'', see the introduction). To see this, write
\[ A = \{ x \in V \, : \, V \models x \mbox{ is hereditarily subcountable} \}. \]
$A$ is a ${\cal P}_t$-subalgebra of $V$, and it will be isomorphic to $U$, once one proves that is initial. It is obviously a fixed point, so it suffices to show that it is well-founded (see \cite[Theorem 7.3]{bergmoerdijk07b}). So let $B \subseteq A$ be a ${\cal P}_t$-subalgebra of $A$, and define
\[ W = \{ x \in V \, : \, x \in A \Rightarrow x \in B \}. \]
It is not hard to see that this is a $\spower$-subalgebra of $V$, so $W = V$ and $A = B$.

This also shows that principles like Church's Thesis {\bf (CT)} and Markov's Principle {\bf (MP)} are valid in $U$, since they are valid in McCarty's model $V$.

One could also unravel the construction of the initial algebra for the power class functor from \cite{bergmoerdijk07b} to obtain an explicit description, as we did in Section 5. Combining the explicit description of a representation $\pi$ in \refprop{principlesinV} with the observation that its associated W-type can be computed as in assemblies, one obtains the following description of $W = W_{\pi}$ in $\Eff$. The underlying set consists of well-founded trees where the edges are labelled by natural numbers, in such a way that the edges into a fixed node are labelled by \emph{distinct} natural numbers. So a typical element is of the form ${\rm sup}_A(t)$, where $A$ is a subset of $\NN$ and $t$ is a function $A \to W$. An alternative would be to regard $W$ as the initial algebra for the functor $X \mapsto [ \NN  \rightharpoonup X ]$, where $[ \NN \rightharpoonup X]$ is the set of partial functions from $\NN$ to $X$. The decorations (realizers) of an element $w \in W$ are defined inductively: $n$ is a realizer of ${\rm sup}_A(t)$, if for every $a \in A$, the expression $n(a)$ is defined and a realizer of $t(a)$.

We need to quotient $W$, internally in $\Eff$, by bisimulation:
\begin{eqnarray*}
\mbox{sup}_A(t) \sim \mbox{sup}_{A'}(t') & \Leftrightarrow & (\forall a \in A) \,  (\exists a' \in A') \, [ta \sim t'a'] \mbox{ and vice versa.}
\end{eqnarray*}
To translate this in terms of realizers, we again use an ``abbreviation'':
\begin{eqnarray*}
n \Vdash x \, \epsilon \, \mbox{sup}_A(t) & \Leftrightarrow & n_0 \in A \mbox{ and } n_1 \Vdash x \sim t(n_0).
\end{eqnarray*}
Then the equivalence relation $\sim \subseteq W \times W$ is defined by:
\begin{eqnarray*}
n \Vdash \mbox{sup}_A(t) \sim \mbox{sup}_{A'}(t') & \Leftrightarrow & (\forall a \in A) \, [ \, n_0(a) \downarrow \mbox{ and } n_0(a) \Vdash ta \, \epsilon \, \mbox{sup}_{A'}(t')] \mbox{ and } \\ & & (\forall a' \in A') \, [ \, n_1(a') \downarrow \mbox{ and } n_1(a') \Vdash t'a' \, \epsilon \, \mbox{sup}_{A}(t)].
\end{eqnarray*}
The quotient in $\Eff$ is precisely $U$, which is therefore the pair consisting of the underlying set of $W$ together with $\sim$ as equality. The reader should verify that the internal membership is again given by the ``abbreviation'' above.

\begin{coro}{validityinV}
The following clauses recursively define what it means that a certain statement is realized by a natural number $n$ in the model $U$:
\begin{eqnarray*}
n \Vdash x \, \epsilon \, {\rm sup}_A(t) & \Leftrightarrow & n_0 \in A \mbox{ and } n_1 \Vdash x = t(n_0). \\
n \Vdash {\rm sup}_A(t) = {\rm sup}_{A'}(t') & \Leftrightarrow & (\forall a \in A) \, [ \, n_0(a) \downarrow \mbox{ and } n_0(a) \Vdash ta \, \epsilon \, {\rm sup}_{A'}(t')] \mbox{ and } \\ & & (\forall a' \in A') \, [ \, n_1(a') \downarrow \mbox{ and } n_1(a') \Vdash t'a' \, \epsilon \, {\rm sup}_{A}(t)]. \\
n \Vdash \phi \land \psi & \Leftrightarrow & n_0 \Vdash \phi \mbox{ and } n_1 \Vdash \psi. \\
n \Vdash \phi \lor \psi & \Leftrightarrow & n = <0,m> \mbox{ and } m \Vdash \phi \mbox{, or } n = <1, m> \mbox{ and } m \Vdash \psi. \\
n \Vdash \phi \rightarrow \psi & \Leftrightarrow & \mbox{For all } m \Vdash \phi, n \cdot m \downarrow \mbox{ and } n \cdot m \Vdash \psi. \\
n \Vdash \neg \phi & \Leftrightarrow & \mbox{There is no } m \mbox{ such that } m \Vdash \phi. \\
n \Vdash \exists x \, \phi(x) & \Leftrightarrow & n \Vdash \phi(a) \mbox{ for some } a \in U. \\
n \Vdash \forall x \, \phi(x) & \Leftrightarrow & n \Vdash \phi(a) \mbox{ for all } a \in U. \\
\end{eqnarray*}
\end{coro}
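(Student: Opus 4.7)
The plan is to obtain the clauses in essentially the same way as in \refcoro{validityinVforIZF}: by unwinding the realizability interpretation of $\Eff$ applied to the object $U$ equipped with its internal equality and membership, using the concrete description of $U$ as a subquotient of $W = W_\pi$ developed in the discussion immediately preceding the corollary.

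First, I would dispatch the propositional clauses. The internal logic of $\Eff$ is Hyland's tripos-theoretic realizability, and the forcing clauses for $\land$, $\lor$, $\rightarrow$ and $\neg$ listed in the statement are precisely the standard Kleene-style clauses for that internal logic. Since $U$ is an object of $\Eff$ and its equality and membership are defined by subobjects of $U \times U$ in $\Eff$, the clauses for the connectives applied to formulas about $U$ are inherited verbatim from the ambient internal logic; no further work is required.

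Next, the atomic clauses for $x \,\epsilon\, \mathrm{sup}_A(t)$ and for $\mathrm{sup}_A(t) = \mathrm{sup}_{A'}(t')$ are read off directly from the explicit description given just before the corollary. There, $U$ was obtained in $\Eff$ as the quotient of $W$ by the bisimulation relation $\sim$, which is defined by simultaneous recursion with the ``abbreviation'' for $\epsilon$, and the realizer clauses for $\sim$ and for $\epsilon$ were already spelled out. One only has to check that this recursive definition is legitimate (an application of the Recursion Theorem in $\Eff$, as in the proof of \refcoro{validityinVforIZF}) and that the quotient by $\sim$ is indeed carried out internally in $\Eff$; then the resulting realizability clauses for $=$ and $\epsilon$ on $U$ are exactly those in the statement.

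The only non-routine step is the clauses for the quantifiers $\exists x\,\phi(x)$ and $\forall x\,\phi(x)$, where $x$ ranges over $U$. A priori, realizability interpretation of $\forall$ and $\exists$ over an object $X$ in $\Eff$ quantifies over generalised elements of $X$ and carries additional realizer data coming from the ``extent'' of $X$. The point is that, by \refprop{principlesinV}, the object $U$ is \emph{uniform} in $\Eff$: its representation $\pi$ can be taken between assemblies with uniform codomain, so the tracking data associated to ranging over elements of $U$ is trivial. Consequently, the usual tripos clauses collapse to the simple form $n \Vdash \exists x\,\phi(x) \iff n \Vdash \phi(a)$ for some $a \in U$, and similarly for $\forall$, which is exactly what the corollary asserts. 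This is the same reduction as in the IZF case treated in \refcoro{validityinVforIZF}, and the main obstacle in writing out the proof in full would be verifying carefully that ``uniformity'' has precisely the effect on the interpretation of quantifiers ranging over $U$ that is being used here; everything else is bookkeeping.
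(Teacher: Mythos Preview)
Your proposal is correct and follows essentially the same approach as the paper. The paper gives no separate proof for this corollary, relying instead on the discussion preceding it (which supplies the atomic clauses) together with the argument of the analogous \refcoro{validityinVforIZF}: the propositional clauses come from the internal logic of $\Eff$ being realizability, and the quantifier clauses from the uniformity of $U$ established in \refprop{principlesinV}---exactly as you outline.
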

From this it follows that the model is the elementary equivalent to the one used for proof-theoretic purposes by Lubarsky in \cite{lubarsky06}.

\begin{rema}{compwithstreicher}
In an unpublished note \cite{streicher05}, Streicher builds a model of {\bf CZF} based an earlier work on realizability models for the Calculus of Constructions. In our terms, his work can be understood as follows. He starts with the morphism $\tau$ in the category $\Asm$ of assemblies, whose codomain is the set of all modest sets, with a modest set realized by any natural number, and a fibre of this map over a modest set being precisely that modest set (note that this map again has uniform codomain). He proceeds to build the W-type associated to $\tau$, takes it as a universe of sets, while interpreting equality as bisimulation. One cannot literally quotient by bisimulation, for which one could pass to the effective topos.

When considering $\tau$ as a morphism in the effective topos, it is not hard to see that it is in fact another representation for the class of subcountable morphisms \smallmap{T}: for all fibres of the representation $\pi$ also occur as fibres of $\tau$, and all fibres of $\tau$ are quotients of fibres of $\pi$. Therefore the model is again the initial ${\cal P}_t$-algebra for the class of subcountable morphisms \smallmap{T} in the effective topos.
\end{rema}

\appendix

\section{Set-theoretic axioms}

Set theory is a first-order theory with one non-logical binary relation symbol $\epsilon$. Since we are concerned with constructive set theories in this paper, the underlying logic will be intuitionistic.

As is customary also in classical set theories like {\bf ZF}, we will use the abbreviations $ \exists x \epsilon a \, (\ldots)$ for $ \exists x \, (x \epsilon a \land \ldots)$, and $\forall x \epsilon a \, (\ldots)$ for $ \forall x \, (x \epsilon a \rightarrow \ldots)$. Recall that a formula is called \emph{bounded}, when all the quantifiers it contains are of one of these two forms.

\subsection{Axioms of IZF}

The axioms of {\bf IZF} are:
\begin{description}
\item[Extensionality:] $\forall x \, ( \, x \epsilon a \leftrightarrow x \epsilon b \, ) \rightarrow a = b$.
\item[Empty set:] $\exists x  \, \forall y  \, \lnot y \epsilon x $.
\item[Pairing:] $\exists x \, \forall y \, (\,  y \epsilon x \leftrightarrow y = a \lor y = b \, )$.
\item[Union:] $\exists x  \, \forall y \, ( \, y \epsilon x \leftrightarrow \exists z \epsilon a \, y \epsilon z  \, )$.
\item[Set induction:] $\forall x \, (\forall y  \epsilon x \, \phi(y) \rightarrow \phi(x)) \rightarrow \forall x \, \phi(x)$.
\item[Infinity:] $\exists a \, ( \, \exists  x \, x \epsilon a \, ) \land ( \, \forall x  \epsilon a \, \exists y \epsilon a \, x \epsilon y \, )$.
\item[Full separation:] $\exists x \,  \forall y \, ( \, y \epsilon x \leftrightarrow y \epsilon a \land \phi(y) \, ) $, for any formula $\phi$ in which $a$ does not occur.
\item[Power set:] $\exists x \, \forall y \, ( \, y \epsilon x \leftrightarrow y \subseteq a \, )$, where $y \subseteq a$ abbreviates $\forall z \, ( z \epsilon y \rightarrow z \epsilon a)$.
\item[Strong collection:] $\forall x \epsilon a \, \exists y \, \phi(x,y) \rightarrow \exists b \, \mbox{B}(x \epsilon a, y \epsilon b) \, \phi$.
\end{description}
In the last axiom, the expression
\[ \mbox{B}(x \epsilon a, y \epsilon b) \, \phi. \]
has been used as an abbreviation for $\forall x \epsilon a \, \exists y \epsilon b \, \phi \land \forall y \epsilon b \, \exists x \epsilon a \, \phi$.

\subsection{Axioms of CZF}

The set theory {\bf CZF}, introduced by Aczel in \cite{aczel78}, is obtained by replacing Full separation by Bounded separation and the Power set axiom by Subset collection:
\begin{description}
\item[Bounded separation:] $\exists x \,  \forall y \, ( \, y \epsilon x \leftrightarrow y \epsilon a \land \phi(y) \, ) $, for any bounded formula $\phi$ in which $a$ does not occur.
\item[Subset collection:] $\exists c \, \forall z \, ( \forall x \epsilon a \, \exists y \epsilon b \, \phi(x,y,z) \rightarrow \exists d \epsilon c \, \mbox{B}(x \epsilon a, y \epsilon d) \, \phi(x, y, z)) $.
\end{description}

\subsection{Constructivist principles}

In this paper we will meet the following constructivist principles associated to recursive mathematics and realizability. In writing these down, we have freely used the symbol $\omega$ for the set of natural numbers, as it is definable in both {\bf CZF} and {\bf IZF}. We also used $0$ for zero and $s$ for the successor operation.

\begin{description}
\item[Axiom of Countable Choice (CC)] \[ \forall i \epsilon \omega \, \exists x \,  \psi(i, x) \rightarrow \exists a , \func{f}{\omega}{a} \, \forall i \epsilon \omega \,  \psi(i,f(i)). \]
\item[Axiom of Relativised Dependent Choice (RDC)] \begin{displaymath}
\begin{array}{l} \phi(x_0) \, \land \, \forall x \, (\phi(x) \rightarrow \exists y \, (\psi(x,y) \, \land \, \phi(y))) \rightarrow \\ \exists a \, \exists \func{f}{\omega}{a} \, (f(0)=x_0 \land \forall i \in \omega \, \phi(f(i),f(si))).
\end{array}
\end{displaymath}
\item[Presentation Axiom (PA)] Every set is the surjective image of a projective set (where a set $a$ is projective, if every surjection $b \to a$ has a section).
\item[Markov's Principle (MP)] \[ \forall n \epsilon \omega \, [\phi(n) \lor \neg \phi(n)] \rightarrow [\neg\neg \exists n \in \omega \, \phi(n) \rightarrow \exists n \epsilon \omega \, \phi(n)].\]
\item[Church's Thesis (CT)] \[ \forall n \epsilon \omega \, \exists m \epsilon \omega \, \phi(n, m) \rightarrow \exists e \epsilon \omega \, \forall n \epsilon \omega \, \exists m,p \epsilon \omega  \, [T(e,n,p) \land U(p,m) \land \phi(n,m)] \] for every formula $\phi(u,v)$, where $T$ and $U$ are the set-theoretic predicates which numeralwise represent, respectively, Kleene's $T$ and result-extraction predicate $U$.\index{Church's Thesis}
\item[Uniformity Principle (UP)] \[ \forall x \, \exists y \epsilon \omega \, \phi(x,y) \rightarrow \exists y \epsilon \omega \, \forall x \, \phi(x,y). \]
\item[Unzerlegbarkeit (UZ)] \[ \forall x \, (\phi(x) \lor \neg \phi(x)) \rightarrow \, \forall x \, \phi \lor \forall x \, \neg \phi. \]
\item[Independence of Premisses for Sets (IP)] \[ (\lnot \theta \to \exists x \, \psi) \to \exists x \, ( \, \lnot \theta \to \psi), \]
where $\theta$ is assumed to be closed.
\item[Independence of Premisses for Numbers (IP$_\omega$)] \[ (\lnot \theta \to \exists n \epsilon \omega \, \psi) \to \exists n \epsilon \omega \, ( \, \lnot \theta \to \psi), \]
where $\theta$ is assumed to be closed.
\end{description}

\section{Predicative categories with small maps}

In the present paper, the ambient category \ct{E} is always assumed to be a \emph{positive Heyting category}. That means that \ct{E} is
\begin{enumerate}
\item[(i)] cartesian, i.e., it has finite limits.
\item[(ii)] regular, i.e., morphisms factor in a stable fashion as a cover followed by a monomorphism.
\item[(iii)] positive, i.e., it has finite sums, which are disjoint and stable.
\item[(iv)] Heyting, i.e., for any morphism \func{f}{Y}{X} the induced pullback functor \func{f^*}{{\rm Sub}(X)}{{\rm Sub}(Y)} has a right adjoint $\forall_f$.
\end{enumerate}

\begin{defi}{coveringsquare}
A diagram in \ct{E} of the form
\diag{ D \ar[d]_f \ar[r] &  C \ar[d]^g \\
B \ar[r]_p & A}
is called a \emph{quasi-pullback}, when the canonical map $D \rTo B \times_A C$ is a cover. If $p$ is also a cover, the diagram will be called a \emph{covering square}. When $f$ and $g$ fit into a covering square as shown, we say that $f$ \emph{covers} $g$, or that $g$ \emph{is covered by} $f$.
\end{defi}

A class of maps in \ct{E} satisfying the following axioms {\bf (A1-9)} will be called a \emph{class of small maps}:
\begin{description}
\item[(A1)] (Pullback stability) In any pullback square
\diag{ D \ar[d]_g \ar[r] & B \ar[d]^f \\
C \ar[r]_p & A }
where $f \in \smallmap{S}$, also $g \in \smallmap{S}$.
\item[(A2)] (Descent) If in a pullback square as above $p$ is a cover and $g \in \smallmap{S}$, then also $f \in \smallmap{S}$.
\item[(A3)] (Sums) Whenever $X \rTo Y$ and $X'\rTo Y'$ belong to \smallmap{S}, so does $X + X' \rTo Y + Y'$.
\item[(A4)] (Finiteness) The maps $0 \rTo 1, 1 \rTo 1$ and $1+1 \rTo 1$ belong to \smallmap{S}.
\item[(A5)] (Composition) $\smallmap{S}$ is closed under composition.
\item[(A6)] (Quotients) In a commuting triangle
\diag{ Z \ar[dr]_h \ar@{->>}[rr]^f & & Y \ar[dl]^g \\
& X, &  }
if $f$ is a cover and $h$ belongs to \smallmap{S}, then so does $g$.
\item[(A7)] (Collection) Any two arrows \func{p}{Y}{X} and \func{f}{X}{A} where $p$ is a cover and $f$ belongs to \smallmap{S} fit into a covering square
\diag{ Z \ar[d]_g \ar[r] & Y \ar@{->>}[r]^p & X \ar[d]^f \\
B \ar@{->>}[rr]_h & & A,}
where $g$ belongs to \smallmap{S}.
\item[(A8)] (Heyting) For any morphism \func{f}{Y}{X} belonging to \smallmap{S}, the right adjoint
\[ \func{\forall_f}{{\rm Sub}(Y)}{{\rm Sub}(X)} \]
sends small monos to small monos.
\item[(A9)] (Diagonals) All diagonals \func{\Delta_X}{X}{X \times X} belong to \smallmap{S}.
\end{description}
In case \smallmap{S} satisfies all these axioms, the pair $(\ct{E}, \smallmap{S})$ will be called a \emph{category with small maps}. Axioms {\bf (A4,5,8,9)} express that the subcategories $\smallmap{S}_X$ of ${\ct E}/X$ whose objects and arrows are both given by arrows belonging to the class \smallmap{S}, are full subcategories of ${\ct E}/X$ which are closed under all the operations of a positive Heyting category. Moreover, these categories together should form a stack on \ct{E} with respect to the finite cover topology according to the Axioms {\bf (A1-3)}. Finally, the class \smallmap{S} should satisfy the Quotient axiom {\bf (A6)} (saying that if a composition \diag{C \ar@{->>}[r] & B \ar[r] & A} belongs to \smallmap{S}, so does $B \rTo A$), and the Collection Axiom {\bf (A7)}. This axiom states that, conversely, if $B \rTo A$ belongs to \smallmap{S} and \diag{C \ar@{->>}[r] & B} is a cover (regular epimorphism), then locally in $A$ this cover has a small refinement.

The following weakening of a class of small maps will play a r\^ole as well: a class of maps satisfying the axioms {\bf (A1), (A3-5), (A7-9)}, and
\begin{description}
\item[(A10)] (Images) If in a commuting triangle
\diag{ Z \ar[dr]_f \ar@{->>}[rr]^e & & Y \ar@{ >->}[dl]^m \\
& X, &  }
$e$ is a cover, $m$ is monic, and $f$ belongs to \smallmap{S}, then $m$ also belongs to \smallmap{S}.
\end{description}
will be a called a \emph{class of display maps}.

Whenever a class of small maps (resp.~a class of display maps) \smallmap{S} has been fixed, an object $X$ will be called small (resp.~displayed), whenever the unique map from $X$ to the terminal object is small (resp.~a display map).

In this paper, we will see the following additional axioms for a class of small (or display) maps.
\begin{description}
\item[(M)] All monomorphisms belong to \smallmap{S}.
\item[(PE)] For any object $X$ the power class object $\spower X$ exists.
\item[(PS)] Moreover, for any map $\func{f}{Y}{X} \in \smallmap{S}$, the power class object $\slspower{X} (f) \rTo X$ in $\ct{E}/X$ belongs to \smallmap{S}.
\item[($\Pi$E)] All morphisms $f \in \smallmap{S}$ are exponentiable.
\item[($\Pi$S)] For any map $\func{f}{Y}{X} \in \smallmap{S}$, a functor 
\[ \func{\Pi_f}{\ct{E}/Y}{\ct{E}/X} \]
right adjoint to pullback exists and preserves morphisms in \smallmap{S}.
\item[(WE)] For all $\func{f}{X}{Y} \in \smallmap{S}$, the W-type $W_f$ associated to $f$ exists.
\item[(WS)] Moreover, if $Y$ is small, also $W_f$ is small.
\item[(NE)] \ct{E} has a natural numbers object $\NN$.
\item[(NS)] Moreover, $\NN \rTo 1 \in \smallmap{S}$.
\item[(F)] For any $\func{\phi}{B}{A} \in \smallmap{S}$ over some $X$ with $A \rTo X \in \smallmap{S}$, there is a cover \func{q}{X'}{X} and a map \func{y}{Y}{X'} belonging to $\smallmap{S}$, together with a displayed \emph{mvs} $P$ of $\phi$ over $Y$, with the following ``generic'' property: if \func{z}{Z}{X'} is any map and $Q$ any displayed \emph{mvs} of $\phi$ over $Z$, then there is a map \func{k}{U}{Y} and a cover \func{l}{U}{Z} with $yk = zl$, such that $k^* P \leq l^* Q$ as (displayed) \emph{mvs}s of $\phi$ over $U$.
\end{description}
More details are to be found in \cite{bergmoerdijk07b}.

A category with small maps $(\ct{E}, \smallmap{S})$ will be called a \emph{predicative class with small maps}, if \smallmap{S} satisfies the axioms {\bf ($\Pi$E), (WE), (NS)} and in addition:
\begin{description}
\item[(Representability)] The class \smallmap{S} is representable, in the sense that there is a small map \func{\pi}{E}{U} (a \emph{representation}) of which any other small map \func{f}{Y}{X} is locally (in $X$) a quotient of a pullback. More explicitly: any $\func{f}{Y}{X} \in \smallmap{S}$ fits into a diagram of the form
\diag{Y \ar[d]_f & B \ar[d] \ar[r] \ar@{->>}[l] & E \ar[d]^{\pi} \\
X & A \ar[r] \ar@{->>}[l] & U, }
where the left hand square is covering and the right hand square is a pullback.
\item[(Exactness)] For any equivalence relation \diag{R \ar@{ >->}[r] & X \times X} given by a small mono, a stable quotient $X/R$ exists in \ct{E}.
\end{description}

\bibliographystyle{plain} \bibliography{ast}

\end{document}